\newtheorem{thm}{Theorem}[section]
\newtheorem{cor}[thm]{Corollary}
\newtheorem{lem}[thm]{Lemma}
\newtheorem{prop}[thm]{Proposition}
\theoremstyle{definition}
\newtheorem{defn}[thm]{Definition}
\newtheorem{example}[thm]{Example}
\theoremstyle{remark}
\newtheorem{rem}[thm]{Remark}
\numberwithin{equation}{section}
\newcommand{\norm}[1]{\lVert #1 \rVert^2}
\newcommand{\spin}{\ifmmode{\rm Spin}\else{${\rm spin}$\ }\fi}
\newcommand{\spinc}{\ifmmode{{\rm Spin}^c}\else{${\rm spin}^c$\ }\fi}
\newcommand{\spinct}{\mathfrak t}
\newcommand{\spincs}{\mathfrak s}
\begin{document}

\title{Non-integer surgery and branched double covers of alternating knots}%
\author{Duncan McCoy}%
\author{Duncan McCoy\\
{\small University of Glasgow}\\
{\small\texttt{d.mccoy.1@research.gla.ac.uk}}}%
\date{}%
\maketitle
\begin{abstract}
We show that if the branched double cover of an alternating link arises as $p/q \in \mathbb{Q}\setminus \mathbb{Z}$ surgery on a knot in $S^3$, then this is exhibited by a rational tangle replacement in an alternating diagram.
\end{abstract}
\section{Introduction}
Given a knot or link in $L \subset S^3$, one can obtain a new link by {\em rational tangle replacement}, that is by replacing one rational tangle in $L$ with some other rational tangle. If $L'$ is obtained from $L$ by rational tangle replacement then the branched double cover $\Sigma(L')$ can be obtained by surgery on some knot in $\Sigma(L)$ \cite{montesinos1973variedades}. This correspondence between tangle replacement and surgery is frequently referred to as the {\em Montesinos trick}. One common occurrence of this correspondence comes from crossing changes, which arise in the study of unknotting and unlinking numbers. In this case, the resulting surgery is of half-integer slope. For alternating knots with unknotting number one, there is a converse to the Montesinos trick.
\begin{thm}[\cite{mccoy2013alternating}, Theorem 1]\label{thm:unknotting}
For an alternating knot, $K$, the following are equivalent:
\begin{enumerate}[(i)]
\item $u(K)=1$;
\item The branched double cover, $\Sigma(K)$, can be obtained by half-integer surgery on a knot in $S^3$;
\item $K$ has an unknotting crossing in any alternating diagram.
\end{enumerate}
\end{thm}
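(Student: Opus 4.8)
The plan is to prove the cyclic chain of implications $(iii) \Rightarrow (i) \Rightarrow (ii) \Rightarrow (iii)$, in which the first two are short and the last carries the substance of the theorem. The implication $(iii) \Rightarrow (i)$ is immediate: changing an unknotting crossing produces the unknot, so $u(K) = 1$ by definition. For $(i) \Rightarrow (ii)$, I would apply the Montesinos trick directly. An unknotting crossing change is a rational tangle replacement taking $K$ to the unknot $U$, so $\Sigma(K)$ is obtained from $\Sigma(U) = S^3$ by surgery on a knot $\kappa \subset S^3$. The standard slope computation for a crossing change shows this surgery is half-integral, i.e. of the form $p/2$ with $p$ odd, which is exactly $(ii)$.

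The heart of the theorem is $(ii) \Rightarrow (iii)$, and here the essential input is that, since $K$ is alternating, $\Sigma(K)$ is an L-space that bounds a \emph{sharp} negative-definite $4$-manifold $X$ whose intersection lattice is the Goeritz lattice $\Lambda$ of a fixed reduced alternating diagram $D$; moreover $\Lambda$ is the graph lattice of the Tait graph of $D$. Starting from the hypothesis $\Sigma(K) = S^3_{p/2}(\kappa)$, I would build a second negative-definite filling $W$ from the surgery description, attaching a $2$-handle along $\kappa$ together with a short linear chain realizing the continued-fraction expansion of $-p/2$; the L-space hypothesis guarantees negative-definiteness after choosing orientations (passing to the mirror of $K$ if necessary). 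Comparing the two fillings of $\Sigma(K)$ --- the sharpness of $X$ together with the Ozsv\'ath--Szab\'o/Ni--Wu formula for the correction terms of rational surgery, fed into Donaldson's diagonalization theorem --- forces $\Lambda$ to embed into a diagonal lattice $\mathbb{Z}^N$ as the orthogonal complement of a single \emph{changemaker} vector $\sigma$ in the sense of Greene. Because the surgery is half-integral ($q = 2$), the admissible shapes of $\sigma$ are highly constrained.

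The main obstacle, carrying the real technical weight, is the combinatorial classification of these embeddings. I would need to determine all ways in which a graph lattice can arise as $\sigma^\perp \subset \mathbb{Z}^N$ for a half-integer changemaker $\sigma$, and to show that every such embedding is \emph{standard} --- that is, it coincides with the embedding produced by an actual crossing change in $D$ under the Montesinos trick. The expected method is an induction on the coordinates of $\sigma$, ruling out exotic embeddings by exploiting the interlacing and vertex-incidence relations forced by the graph structure of $\Lambda$. Once the embedding is pinned down, it translates back into the diagram: the distinguished data of the half-integer surgery single out an edge of the Tait graph, equivalently a crossing of $D$, whose change unknots $K$. This produces an unknotting crossing in the particular diagram $D$.

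Finally, to promote the conclusion from $D$ to \emph{any} alternating diagram, I would invoke the Tait flyping theorem of Menasco--Thistlethwaite, by which any two reduced alternating diagrams of $K$ are related by a sequence of flypes, together with the observation that a flype carries an unknotting crossing to an unknotting crossing. This upgrades the existence of an unknotting crossing in one alternating diagram to its existence in every alternating diagram, completing $(ii) \Rightarrow (iii)$ and hence the theorem.
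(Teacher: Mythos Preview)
This theorem is quoted from \cite{mccoy2013alternating} and not re-proved in the present paper; it is the half-integer prototype for Theorem~\ref{thm:rationalsurgery}. Your outline matches the shape of the original argument as one can infer it from the present paper's generalization, with two points worth correcting.

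First, for half-integer slope the changemaker lattice is not the orthogonal complement of a \emph{single} vector but of a pair, $\langle w_0,\, e_1-e_0\rangle^\perp\subset\mathbb{Z}^{t+2}$ (Definition~\ref{defn:CMlattice} with $q=2$). The extra coordinates $e_0,e_1$ pick out exactly two regions of $D$, and any crossing between them is a \emph{marked crossing}; the substantive content of \cite{mccoy2013alternating}, recorded here as Theorem~\ref{thm:markedmeansunknotting}, is that a marked crossing exists and is an unknotting crossing of the required sign. That step is not an ``induction on the coordinates of $\sigma$'' ruling out exotic embeddings, but a direct analysis of irreducible elements of the graph lattice $\Lambda_D$ under the given embedding into $\mathbb{Z}^{t+2}$.

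Second, your closing appeal to the Tait flyping theorem is unnecessary. The Goeritz lattice is a flype invariant, so once $\Lambda_D$ is isomorphic to a $p/2$-changemaker lattice for one reduced alternating diagram it is so for every reduced alternating diagram, and Theorem~\ref{thm:markedmeansunknotting} then produces a marked (hence unknotting) crossing directly in each. One does not need to transport an unknotting crossing through a sequence of flypes.
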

One key ingredient in the proof of Theorem~\ref{thm:unknotting}, was the obstruction to half-integer surgery developed by Greene \cite{Greene3Braid}, which combines Donaldson's Theorem A and correction terms from Heegard Floer homology. This has been generalized by Gibbons to give an obstruction to surgeries of other slopes \cite{gibbons2013deficiency}. This allows us to generalize Theorem~\ref{thm:unknotting} to non-integer surgeries.

\begin{thm}\label{thm:informaltheorem}
Let $L$ be an alternating knot or link. For any $p/q\in \mathbb{Q}\setminus\mathbb{Z}$ with $|p/q|>1$, the double branched cover $\Sigma(L)$ arises as $p/q$-surgery on a knot in $S^3$ if and only if $L$ possesses an alternating diagram obtained by rational tangle replacement from an almost-alternating diagram of the unknot and the surgery corresponding to this replacement is of slope $p/q$.
\end{thm}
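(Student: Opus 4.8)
The plan is to establish the two implications separately, with essentially all of the content lying in the reverse (``only if'') direction. The forward direction is immediate from the Montesinos trick: if $L$ arises from an almost-alternating diagram of the unknot $U$ by a rational tangle replacement of slope $p/q$, then since $\Sigma(U)=S^3$ the Montesinos trick \cite{montesinos1973variedades} exhibits $\Sigma(L)$ directly as $p/q$-surgery on the corresponding knot in $\Sigma(U)=S^3$, once the slope bookkeeping in the Montesinos correspondence is recorded.

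For the reverse direction, suppose $\Sigma(L)=S^3_{p/q}(K)$ with $p/q\in\mathbb{Q}\setminus\mathbb{Z}$ and $|p/q|>1$; after mirroring $L$ (which replaces $K$ by its mirror and $p/q$ by $-p/q$) I may assume $p/q>1$. Following the strategy behind Theorem~\ref{thm:unknotting}, I would first fix a reduced alternating diagram of $L$, so that $\Sigma(L)$ bounds the canonical sharp negative-definite $4$-manifold $X_L$ whose intersection lattice is the (negative) Goeritz lattice of the diagram. On the surgery side, expanding $p/q$ as a continued fraction produces a linear plumbing and hence a second $4$-manifold bounding $\Sigma(L)$; orienting the two pieces appropriately and gluing them along $\Sigma(L)$ yields a closed negative-definite $4$-manifold $Z$. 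Donaldson's Theorem A then forces the intersection form of $Z$ to be the standard diagonal form $-\mathbb{Z}^N$, producing an embedding of the Goeritz lattice together with the distinguished vectors coming from the surgery into $-\mathbb{Z}^N$.

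The heart of the argument is the ensuing lattice-theoretic classification. By Gibbons' generalization \cite{gibbons2013deficiency} of Greene's obstruction \cite{Greene3Braid}, the distinguished vectors recording $K$ and the continued-fraction chain must satisfy a generalized (non-integral) changemaker condition, while the Heegaard Floer correction terms of $\Sigma(L)$, which for an alternating link are computed directly from the Goeritz lattice, impose the accompanying deficiency inequalities. Combining Donaldson's constraint with these, I would classify every embedding of the Goeritz lattice compatible with a $p/q$-changemaker structure, and then translate this combinatorial classification back into diagrammatics: each admissible embedding should be realized by a rational tangle sitting inside an alternating diagram of $L$ whose replacement produces an almost-alternating diagram of the unknot, with the replacement slope read off as exactly $p/q$.

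I expect the main obstacle to be precisely this last translation together with the changemaker analysis, namely controlling the generalized changemaker vectors forced by the continued-fraction expansion of $p/q$ and showing that every compatible lattice embedding is \emph{geometrically realized} by a rational tangle replacement in an alternating diagram, rather than merely existing abstractly. This is where the hypotheses $|p/q|>1$ and $p/q\notin\mathbb{Z}$ should enter decisively: as in the half-integer case, they ought to supply enough rigidity in the lattice to pin the tangle down.
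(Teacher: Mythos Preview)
Your outline is correct and matches the paper's strategy: the ``if'' direction is the Montesinos trick (Proposition~\ref{prop:3implies1}), and the ``only if'' direction proceeds by mirroring to $p/q>1$, using the Ozsv\'ath--Szab\'o sharp $4$-manifold bounded by $\Sigma(L)$, applying Gibbons' obstruction to identify the Goeritz lattice with a $p/q$-changemaker lattice (Theorem~\ref{thm:Gibbonsrefined}), and then translating the lattice structure into a rational tangle in an alternating diagram.

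The one substantive point worth flagging concerns the step you correctly identify as the main obstacle, namely the lattice-to-diagram translation. The paper does \emph{not} carry out a direct classification of all $p/q$-changemaker embeddings and their geometric realizations. Instead, it uses a reduction trick: one first isolates a ``fractional tangle'' in the diagram governed by the fractional part $L_F$ of the changemaker lattice (after some flypes), computes its slope to be $\frac{q-r}{r}$, and then replaces it by a single crossing. The resulting diagram has Goeritz lattice isomorphic to an $(n-\tfrac{1}{2})$-changemaker lattice with that crossing marked, so the half-integer theory already established in \cite{mccoy2013alternating} (Theorem~\ref{thm:markedmeansunknotting} here) finishes the job by certifying the crossing as an unknotting crossing. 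This sidesteps having to redo the full unknotting analysis for general $q$, and is where the hypothesis $q\geq 2$ buys something concrete: the fractional part $L_F$ is nontrivial and rigid enough to pin down the tangle.
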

We prove the following theorem, which gives a more precise description of the tangle replacements in Theorem~\ref{thm:informaltheorem}.
\begin{thm} \label{thm:rationalsurgery}
Let $L$ be an alternating link. Let $p>q>1$ be coprime integers and suppose that $p/q=n-r/q$, where $0<r<q$. The following are equivalent:
\begin{enumerate}[(i)]
\item there is $\kappa \subset S^3$ with $\Sigma(L)=S^3_{-p/q}(\kappa)$;
\item for any alternating diagram $D$ of $L$ the Goeritz form $\Lambda_D$ of $D$ is isomorphic to a $p/q$-changemaker lattice;
\item $L$ has an alternating diagram, $\widetilde{D}$, containing a rational tangle $T$ of slope $\frac{q-r}{r}$. Furthermore, if $D'$ is the alternating diagram obtained by replacing $T$ with a single crossing $c$, then $c$ is an unknotting crossing which is positive if $\sigma(D')=-2$ and negative if $\sigma(D')=0$.
\end{enumerate}
\end{thm}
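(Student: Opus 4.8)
The plan is to establish the cyclic chain (i) $\Rightarrow$ (ii) $\Rightarrow$ (iii) $\Rightarrow$ (i), with the Heegaard Floer input confined to the first implication. For (i) $\Rightarrow$ (ii) I would use the fact that the branched double cover of an alternating link bounds a canonical four-manifold $X_D$ whose intersection form is the Goeritz form $\Lambda_D$, and that, after fixing the orientation for which $\Lambda_D$ is negative-definite, $X_D$ is \emph{sharp}, so its correction terms realise the extremal values determined by $\Lambda_D$. Assuming $\Sigma(L)=S^3_{-p/q}(\kappa)$, one builds a second bounding four-manifold from the linear plumbing associated to the continued fraction expansion of $-p/q$ together with the two-handle along $\kappa$, and compares the two computations of the correction terms $d(\Sigma(L),\mathfrak{s})$. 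A standard gluing of $X_D$ to this surgery four-manifold along $\Sigma(L)$ produces a closed negative-definite manifold, so Donaldson's Theorem~A forces its intersection form to be the standard diagonal lattice $-\mathbb{Z}^N$; Gibbons' deficiency obstruction then pins down the embedding $\Lambda_D \hookrightarrow -\mathbb{Z}^N$ so that the orthogonal complement is generated by a changemaker vector together with the vectors coming from the continued fraction of $p/q$, which is precisely the defining condition for $\Lambda_D$ to be a $p/q$-changemaker lattice. Because any two alternating diagrams of $L$ are related by flypes, which preserve the isomorphism class of the Goeritz form, the conclusion is independent of the chosen alternating diagram $D$.

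The step (ii) $\Rightarrow$ (iii) is the combinatorial core and, I expect, the main obstacle. Here I would analyse a standard basis of the $p/q$-changemaker lattice and compare it against the structure imposed on a lattice by the requirement that it be the Goeritz form of an alternating diagram, following and substantially extending the method behind Theorem~\ref{thm:unknotting}. The aim is to show that the changemaker structure forces the checkerboard graph of $D$ to contain a distinguished subconfiguration that one recognises geometrically as a rational tangle; the hypothesis $p/q=n-r/q$ should produce a tangle $T$ of slope $\frac{q-r}{r}$, and collapsing $T$ to a single crossing $c$ yields the alternating diagram $D'$. The delicate part is tracking signs: one must verify that $c$ is genuinely an unknotting crossing and determine its sign from $\sigma(D')$, which I would control by keeping careful account of how the diagonal basis vectors of $-\mathbb{Z}^N$ distribute across the two checkerboard colourings, and hence of the definite contributions to the Goeritz data of $D'$. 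This is the least formulaic portion of the argument, where the bulk of the original work lies.

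Finally, (iii) $\Rightarrow$ (i) is a direct application of the Montesinos trick. Changing the unknotting crossing $c$ in $D'$ produces an almost-alternating diagram of the unknot, whose branched double cover is $S^3$; re-inserting the rational tangle $T$ of slope $\frac{q-r}{r}$ in place of $c$ recovers $\widetilde{D}$, and hence $L$, by a rational tangle replacement. By the Montesinos correspondence this realises $\Sigma(L)$ as surgery on the knot $\kappa\subset S^3$ obtained by lifting the tangle arc, and a continued-fraction computation of the resulting slope from the tangle data $\frac{q-r}{r}$, using the relation $p/q=n-r/q$, identifies the surgery coefficient as $-p/q$. This closes the cycle and completes the proof.
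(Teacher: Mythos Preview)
Your cyclic outline (i)$\Rightarrow$(ii)$\Rightarrow$(iii)$\Rightarrow$(i) matches the paper's exactly, and your descriptions of (i)$\Rightarrow$(ii) (sharpness of the Goeritz four-manifold plus Gibbons' obstruction, i.e.\ Theorem~\ref{thm:Gibbonsrefined}) and (iii)$\Rightarrow$(i) (Montesinos trick with the continued-fraction slope computation, as in Proposition~\ref{prop:3implies1}) are correct and essentially what the paper does.

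The one place your sketch diverges from the paper is in how you propose to finish (ii)$\Rightarrow$(iii). You suggest verifying directly that $c$ is an unknotting crossing of the correct sign by tracking how the basis vectors distribute across the colourings of $D'$. The paper instead \emph{reduces to the half-integer case}: after flyping so that the fractional part of the changemaker lattice is realised by an explicit rational subtangle (the ``fractional tangle'', Lemmas~\ref{lem:flypetorational}--\ref{lem:markerbounds}) and replacing it by a single crossing $c$, one checks that the Goeritz form $\Lambda_{D'}$ of the resulting diagram is an $(n-\tfrac{1}{2})$-changemaker lattice with $c$ a \emph{marked crossing} (Proposition~\ref{prop:CMidentifiestangle}). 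At that point the unknotting property and the sign of $c$ are not re-derived but simply read off from the already-established Theorem~\ref{thm:unknotting} (in the form of Theorem~\ref{thm:markedmeansunknotting}). This reduction is the paper's main structural economy: all of the delicate sign-tracking you anticipate is absorbed into the earlier $q=2$ result, so the new work is confined to isolating the fractional tangle and computing its slope as $\frac{q-r}{r}$ (Lemma~\ref{lem:fractionalslope}). Your proposed direct verification would presumably work but would duplicate effort already encapsulated in Theorem~\ref{thm:unknotting}.
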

The condition that $p>q$ in Theorem~\ref{thm:rationalsurgery} is necessary for $(iii)$ to hold. However, when $|p/q|\leq 1$ the situation is even simpler. The following is a straightforward consequence of known bounds on $L$-space surgeries.
\begin{prop}\label{prop:smallsurgery}
Suppose that $S^3_{-p/q}(\kappa)=\Sigma(L)$ for some $1>p/q>0$ with $L$ an alternating link, then $\kappa$ is the unknot. In particular $\Sigma(L)$ is a lens space and $L$ is a 2-bridge knot or link.
\end{prop}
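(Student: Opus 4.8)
The plan is to combine the fact that the branched double cover of an alternating link is an L-space with the known genus bound on the slopes of L-space surgeries. First I would recall that since $L$ is alternating, $\Sigma(L)$ is an L-space (branched double covers of quasi-alternating links are L-spaces, by Ozsv\'ath--Szab\'o); since the total rank of $\widehat{HF}$ is insensitive to orientation reversal, $-\Sigma(L)$ is an L-space as well. To move the surgery coefficient into the positive range in which the genus bound is stated, I would pass to the mirror knot $\overline{\kappa}$, using the identity $S^3_{p/q}(\overline{\kappa}) = -S^3_{-p/q}(\kappa) = -\Sigma(L)$. Thus $\overline{\kappa}$ admits a positive L-space surgery of slope $p/q$ with $0 < p/q < 1$.

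Next I would invoke the bound on L-space surgeries (Ozsv\'ath--Szab\'o, with input from Rasmussen): if a knot $K \subset S^3$ admits an L-space surgery of positive slope $p/q$, then $p/q \ge 2g(K)-1$, where $g(K)$ denotes the Seifert genus. Applying this to $\overline{\kappa}$ yields $2g(\overline{\kappa}) - 1 \le p/q < 1$, whence $g(\overline{\kappa}) < 1$ and so $g(\overline{\kappa}) = 0$. A knot of Seifert genus zero is the unknot, so $\overline{\kappa}$, and therefore $\kappa$, is unknotted.

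Finally, with $\kappa$ the unknot, $\Sigma(L) = S^3_{-p/q}(\kappa)$ is by definition the lens space obtained by $-p/q$-surgery on the unknot. It then remains to identify $L$: by the classical fact that a link has a lens space as its branched double cover precisely when it is a $2$-bridge link (Hodgson--Rubinstein, via uniqueness of the genus-one Heegaard splitting and of the covering involution of a lens space), $L$ must be a $2$-bridge knot or link, as claimed.

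The only genuine subtlety---the reason this is \emph{straightforward} rather than immediate---is the bookkeeping with orientations and mirrors needed to apply the genus bound, which is phrased for positive surgery slopes, to a surgery with negative coefficient $-p/q$. Once that normalization is in place the genus bound does all the work, forcing $g=0$, and the final identification of $L$ as a $2$-bridge link is a direct appeal to the classification of links with lens space branched double covers.
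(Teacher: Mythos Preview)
Your proof is correct and follows essentially the same approach as the paper: both use that $\Sigma(L)$ is an $L$-space and then apply the genus bound $2g-1\leq p/q$ for $L$-space surgeries to force $g=0$. Your version is slightly more careful with the mirror/orientation bookkeeping (passing to $\overline{\kappa}$ to obtain a positive surgery slope before invoking the bound), whereas the paper simply asserts the bound for $\kappa$ directly; you also cite Hodgson--Rubinstein for the identification of $L$ as $2$-bridge, which the paper leaves implicit.
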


\paragraph{}The starting point for the proof of Theorem~\ref{thm:rationalsurgery}, is a result of Gibbons \cite{gibbons2013deficiency}, which generalizes the work of Greene \cite{greene2010space}, \cite{GreeneLRP}, \cite{Greene3Braid}. It provides strong restrictions on when a manifold $Y$ can be constructed by negative surgery and be the boundary of a sharp positive-definite 4-manifold. In its original formulation, this result has some technical hypotheses on the $d$-invariants of $Y$. However, the work of Ni and Wu \cite{ni2010cosmetic} and Greene \cite{greene2010space} is adequate to show that these hypotheses are automatically satisfied. This gives the following refinement of Gibbons' theorem.

\begin{thm}\label{thm:Gibbonsrefined}
Let $\kappa \subset S^3$ be a knot and suppose that for some $p/q>0$, $S^3_{-p/q}(\kappa)$ bounds a positive-definite, simply-connected, smooth 4-manifold $X$ with intersection form $Q_X$. If the manifold $X$ is sharp, then $Q_X$ is isomorphic to a $p/q$-changemaker lattice.
\end{thm}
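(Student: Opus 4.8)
The plan is to deduce Theorem~\ref{thm:Gibbonsrefined} directly from Gibbons' theorem \cite{gibbons2013deficiency}, whose conclusion is exactly that $Q_X$ is a $p/q$-changemaker lattice but whose statement carries auxiliary hypotheses on the correction terms of $Y = S^3_{-p/q}(\kappa)$. The entire content of the refinement is therefore to show that these hypotheses hold automatically whenever $Y$ is obtained by surgery on a knot in $S^3$. First I would isolate precisely which inequalities among the $d$-invariants of $Y$ Gibbons requires; in his framework these are the conditions guaranteeing that the differences between the $d$-invariants of $Y$ and those of the corresponding lens space are governed by a nonnegative, monotone, slowly-varying integer sequence.

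Next I would invoke the Ni--Wu surgery formula \cite{ni2010cosmetic}, which computes the correction terms of any $p/q$-surgery on $\kappa$ in terms of the correction-term invariants $V_j = V_j(\kappa)$ of the knot together with the correction terms of the corresponding surgery on the unknot. Concretely, for each \spinc structure the formula writes the relevant $d$-invariant as the lens-space value minus twice a term of the form $\max\{V_{j}, V_{j'}\}$, where $j, j'$ are determined by the \spinc structure and by $p$ and $q$ (after reducing the negative slope to a positive one by orientation reversal, replacing $\kappa$ by its mirror). This reduces every hypothesis of Gibbons to a statement purely about the sequence $(V_j)_{j \ge 0}$.

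I would then verify those statements using the standard properties of the $V_j$: they are nonnegative, nonincreasing, eventually zero, and satisfy the unit-step bound $V_j - V_{j+1} \in \{0,1\}$, all established by Ni--Wu and Greene \cite{greene2010space} from Rasmussen's work and the large-surgery formula of Ozsv\'ath--Szab\'o. Monotonicity and the unit-step bound are exactly what is needed for Gibbons' monotonicity and slow-variation hypotheses, while nonnegativity supplies the required sign. With the hypotheses in hand Gibbons' theorem applies: Donaldson's theorem~A embeds the positive-definite form $Q_X$ into the diagonal lattice $\mathbb Z^N$, sharpness of $X$ forces the $d$-invariants of $Y$ to be realized by this embedding, and the combinatorics of the realization, constrained by the verified properties of the $V_j$, identify $Q_X$ as a $p/q$-changemaker lattice.

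The main obstacle I anticipate is bookkeeping rather than conceptual: one must match Gibbons' indexing of \spinc structures and his normalization of the $d$-invariants with the conventions in the Ni--Wu formula, taking care with the orientation reversal needed to pass from the negative slope $-p/q$ to the positive slope used in the surgery formula and in the lattice-embedding results. Once the dictionary between these conventions is fixed, checking that the $V_j$-inequalities imply Gibbons' hypotheses is routine, and no Floer-theoretic input beyond \cite{ni2010cosmetic} and \cite{greene2010space} is required.
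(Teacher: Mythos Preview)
Your overall architecture is right --- reduce to Gibbons' theorem by verifying its auxiliary $d$-invariant hypotheses via the Ni--Wu formula --- but you have misidentified what those hypotheses are, and in doing so you miss the one nontrivial ingredient of the proof.

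Gibbons' extra hypothesis is not a qualitative ``monotone, slowly-varying'' condition on the $\tilde d_i$; it is a quantitative lower bound on the number of \spinc\ structures for which $\tilde d_i$ vanishes, namely $|Z| > \min\{p-1, q\}$ where $Z = \{0 \le i \le p-1 : \tilde d_i = 0\}$. The Ni--Wu formula and the standard properties of the $V_j$ (nonnegative, nonincreasing, unit step, eventually zero) let you compute $|Z|$ in terms of $\tilde g(\kappa) = \min\{j : V_j = 0\}$, but they do \emph{not} by themselves bound $\tilde g$. If $\tilde g$ were large relative to $p/q$ one would have $|Z| = 0$ and Gibbons' hypothesis would fail. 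So the step you describe as ``routine'' is in fact impossible with only the inputs you list.

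What is missing is Greene's genus-type bound: if $S^3_n(\kappa)$ bounds a simply-connected negative-definite $4$-manifold then $2\tilde g(\kappa) \le n - \sqrt{n}$ (this is the version of \cite[Theorem~1.1]{greene2010space} with $g$ replaced by $\tilde g$, which goes through since the $L$-space hypothesis there is only used to get the vanishing pattern that $\tilde g$ encodes). One first passes from $S^3_{p/q}(\kappa)$ to $S^3_n(\kappa)$ with $n = \lceil p/q \rceil$ via a positive-definite cobordism, applies the bound, and then combines it with the explicit count of $|Z|$ in terms of $\tilde g$ to obtain $|Z| > q \ge \min\{p-1,q\}$. Only then does Gibbons' theorem apply. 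You should add this step; without it the argument has a genuine gap.
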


The definition of a $p/q$-changemaker lattice is given in Section~\ref{sec:cmlattices}.

\subsection{Rational unknotting number one}
A knot or link $L$ has {\em rational unknotting number one} if it has a diagram which can be turned into an unknot by replacing a rational tangle with a trivial tangle \cite{Line96knots}. Equivalently, $L$ has rational unknotting number one if it has some diagram which can be turned into the unknot by some rational tangle replacement. Using this terminology, Theorem~\ref{thm:rationalsurgery} yields the following corollaries.
\begin{cor}\label{cor:TUNO1}
If $L$ is an alternating link and there is $\kappa \subset S^3$  such that $\Sigma(L)=S_{p/q}^3(\kappa)$ for some $q\geq 2$, then $K$ has rational unknotting number one.
\end{cor}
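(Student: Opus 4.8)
The plan is to reduce the statement to Theorem~\ref{thm:rationalsurgery} and Proposition~\ref{prop:smallsurgery} after first disposing of orientation conventions. I would begin by noting that rational unknotting number one is invariant under taking mirror images: mirroring an entire diagram carries a rational tangle replacement to a rational tangle replacement, so $L$ has rational unknotting number one if and only if its mirror $\overline{L}$ does. Combining this with the standard identities $\Sigma(\overline{L})=-\Sigma(L)$ and $S^3_{s}(\kappa)=-S^3_{-s}(\overline{\kappa})$, I may replace $L$ by $\overline{L}$ (and $\kappa$ by $\overline{\kappa}$) if necessary and assume from the outset that $\Sigma(L)=S^3_{-p/q}(\kappa)$ with $p,q$ coprime positive integers and $q\geq 2$. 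Since $q\geq 2$ forces $p/q\neq 1$, it then suffices to treat the cases $0<p/q<1$ and $p/q>1$ separately.

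In the case $0<p/q<1$, Proposition~\ref{prop:smallsurgery} applies directly and tells us that $\kappa$ is the unknot and that $L$ is a $2$-bridge knot or link. I would then recall that every $2$-bridge link is the numerator closure of a rational tangle, so that replacing this tangle by a trivial tangle produces the unknot; this is precisely a rational tangle replacement unknotting $L$, giving rational unknotting number one. (The degenerate case $p=1$, where $\Sigma(L)=S^3$ and $L$ is the unknot, may simply be set aside.)

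In the case $p/q>1$ we have $p>q>1$, so Theorem~\ref{thm:rationalsurgery} applies: condition $(i)$ holds, hence so does condition $(iii)$. This yields an alternating diagram $\widetilde{D}$ of $L$ containing a rational tangle $T$ such that, after replacing $T$ by a single crossing $c$, the resulting alternating diagram $D'$ has $c$ as an unknotting crossing. The key step is to observe that $T$ and $c$ occupy the \emph{same} tangle region of the diagram, so that the unknot obtained from $D'$ by changing $c$ is equally obtained from $\widetilde{D}$ by replacing the rational tangle $T$ with the single crossing of sign opposite to $c$. As a single crossing is itself a rational tangle, this is a rational tangle replacement converting $\widetilde{D}$ into an unknot, and hence $L$ has rational unknotting number one.

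I expect the main obstacle to lie in this last translation: verifying that the abstract ``unknotting crossing'' furnished by condition $(iii)$ can genuinely be realised as a single rational tangle replacement performed in the region previously occupied by $T$, rather than as an independent crossing change located elsewhere in $D'$. Once the tangle region is identified consistently between $\widetilde{D}$ and $D'$, the conclusion is immediate, and assembling the two cases together with the initial mirror-invariance reduction completes the proof.
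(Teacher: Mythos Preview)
Your proposal is correct and follows essentially the same approach as the paper's proof: reduce via mirror symmetry, handle $|p/q|<1$ with Proposition~\ref{prop:smallsurgery}, and handle $|p/q|>1$ with Theorem~\ref{thm:rationalsurgery}. The obstacle you anticipate does not arise, since condition $(iii)$ of Theorem~\ref{thm:rationalsurgery} explicitly defines $c$ as the single crossing obtained by replacing the tangle $T$, so $c$ lies in the same tangle region by construction.
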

\begin{proof}
If $|p/q|\leq 1$, then Proposition~\ref{prop:smallsurgery} shows that $L$ is a 2-bridge knot or link. Any 2-bridge knot or link obviously has rational unknotting number one. Assume now that $|p/q|>1$.
If $\Sigma(L)=S_{p/q}^3(\kappa)$, then $\Sigma(\overline{L})=S_{-p/q}^3(\overline{\kappa})$. Thus Theorem~\ref{thm:rationalsurgery} applies to show $L$ or $\overline{L}$ has rational unknotting number one. The result follows as $L$ has rational unknotting number one if and only if $\overline{L}$ has.
\end{proof}

\begin{cor}\label{cor:TUNO2}
If $L$ is an alternating link with rational unknotting number one, then either $\Sigma(L)$ can be constructed by integer surgery on a knot in $S^3$ or it has an alternating diagram which can be obtained by rational tangle replacement on an almost-alternating diagram of the unknot.
\end{cor}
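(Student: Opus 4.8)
The plan is to feed the hypothesis into the Montesinos trick to produce a surgery description of $\Sigma(L)$, and then run the dichotomy of Theorem~\ref{thm:rationalsurgery} and Proposition~\ref{prop:smallsurgery} according to the surgery slope. By definition, rational unknotting number one means $L$ has a diagram in which replacing some rational tangle by a trivial tangle yields the unknot $U$; applying the Montesinos trick \cite{montesinos1973variedades} to this tangle replacement and reversing the resulting surgery exhibits $\Sigma(L)=S^3_{p/q}(\kappa)$ for some knot $\kappa\subset S^3=\Sigma(U)$ and some slope $p/q$, whose denominator $q\geq 1$ is determined by the two tangle slopes. If $q=1$ this is an integer surgery and the first alternative holds, so I would immediately reduce to the case $q\geq 2$ and aim for the second alternative. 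As in the proof of Corollary~\ref{cor:TUNO1}, I would retain the freedom to pass to the mirror: replacing $L,\kappa,p/q$ by $\overline L,\overline\kappa,-p/q$, and noting that every feature in the conclusion (being alternating, being a rational tangle, being an almost-alternating diagram, being the unknot) is preserved under mirroring, so that it suffices to produce the conclusion for whichever of $L$, $\overline L$ is convenient.

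For the principal case $q\geq 2$ and $|p/q|>1$, I would mirror to arrange the slope to be negative and apply Theorem~\ref{thm:rationalsurgery}. Its condition (iii) supplies an alternating diagram $\widetilde D$ of $L$ containing a rational tangle $T$, together with the alternating diagram $D'$ obtained by collapsing $T$ to a single crossing $c$, where $c$ is an unknotting crossing. The key reinterpretation is then purely diagrammatic: changing the crossing $c$ in $D'$ produces a diagram $D''$ of the unknot, and because $D'$ is a reduced alternating diagram, $D''$ differs from an alternating diagram in exactly one crossing and is therefore an almost-alternating diagram of the unknot. Since $\widetilde D$ is recovered from $D''$ by replacing the single (now switched) crossing at the site of $c$ — itself a rational tangle — with the rational tangle $T$, the diagram $\widetilde D$ is obtained from the almost-alternating unknot diagram $D''$ by a rational tangle replacement. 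Mirroring back if necessary gives the second alternative for $L$.

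In the remaining case $q\geq 2$ and $|p/q|<1$ (the value $|p/q|=1$ being impossible once $\gcd(p,q)=1$ and $q\geq 2$), I would invoke Proposition~\ref{prop:smallsurgery}, again after mirroring to make the slope negative, to conclude that $\kappa$ is the unknot, $\Sigma(L)$ is a lens space, and $L$ is a $2$-bridge link. For such $L$ I would then construct the required picture by hand: exploiting that a $2$-bridge link is the numerator closure of a rational tangle, I would exhibit an almost-alternating diagram of the unknot carrying a rational tangle whose replacement yields a reduced alternating diagram of $L$, so that the second alternative again holds.

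The step I expect to be the main obstacle is this final $2$-bridge construction: one must exhibit a concrete almost-alternating unknot diagram and a rational tangle replacement whose output is genuinely alternating and represents the prescribed link $L$, rather than merely some $2$-bridge link sharing the same double cover. By contrast, the reinterpretation of Theorem~\ref{thm:rationalsurgery}(iii) in the main case is essentially formal once one checks that switching one crossing of a reduced alternating diagram yields an almost-alternating diagram; the only care needed there is to confirm that $D''$ is not accidentally alternating or reducible, which follows because the unknot admits no reduced alternating diagram with crossings. Tracking the mirror symmetry consistently across the two regimes is the remaining bookkeeping.
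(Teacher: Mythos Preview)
Your proposal is correct and follows essentially the same argument as the paper: apply the Montesinos trick, split into cases according to $q$ and $|p/q|$, and invoke Theorem~\ref{thm:rationalsurgery} or Proposition~\ref{prop:smallsurgery} accordingly. The paper dispatches the $2$-bridge case in a single sentence (any alternating $2$-bridge diagram is the numerator closure of a rational tangle and hence arises by rational tangle replacement on the standard diagram of the unknot), so the step you anticipate as the ``main obstacle'' is in fact the easiest one.
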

\begin{proof}
Montesinos' trick implies that if $L$ has rational unknotting number one then $\Sigma(L) =S_{p/q}^3(\kappa)$ for some $p/q$ and $\kappa \subset S^3$. If $q=1$, then $\Sigma(L)$ can be constructed by integer surgery. Assume now that $q\geq 2$. If $|p/q|>1$, then Theorem~\ref{thm:rationalsurgery} applies to $L$ or $\overline{L}$ to give the necessary alternating diagram $D$. If $|p/q|<1$, then $L$ is a 2-bridge link and any alternating 2-bridge diagram has the required property. In fact, any 2-bridge diagram can be obtained by rational tangle replacement on the standard diagram of the unknot.
\end{proof}

\subsection{Outline of Theorem~\ref{thm:rationalsurgery}'s proof}
We will now outline the proof of Theorem~\ref{thm:rationalsurgery}. The equivalence between tangle replacement and surgery on the double branched cover in $(iii) \Rightarrow (i)$ comes from the Montesinos trick. We sketch the details of this in Section~\ref{sec:tangles}. The implication $(i) \Rightarrow (ii)$ follows from combining the work of Ozsv{\'a}th and Szab{\'o} on the branched double covers of alternating knots \cite{ozsvath2005heegaard} with the work of Gibbons \cite{gibbons2013deficiency}. In Section~\ref{sec:deficiencies}, we make the necessary observations on $d$-invariants to show that Gibbons' work is sufficient to prove Theorem~\ref{thm:Gibbonsrefined}. To establish $(ii) \Rightarrow (iii)$, we study  knots and links with diagrams for which the Goeritz form is isomorphic to a $p/q$-changemaker lattice.

\paragraph{} If $p/q$ can be written in the form $p/q=n-r/q>1$ with $q>1$, then it has a continued fraction $[a_0, a_1, \dotsc , a_l]^-$ with $a_i\geq 2$ for $1\leq i\leq l$ and $a_0=n$. A $p/q$-changemaker lattice takes the form of an  orthogonal complement (see Section~\ref{sec:cmlattices} for the full definition):
$$L=\langle w_0, \dotsc , w_l \rangle^\bot \subseteq \mathbb{Z}^{t+s+1}=\langle f_1, \dotsc , f_t, e_0, \dotsc , e_s \rangle,$$
where the $f_i$ and $e_j$ form an orthonormal basis for $\mathbb{Z}^{r+s+1}$, and the $w_i$ have the properties that
\[w_i \cdot w_j =
    \begin{cases}
        a_i   & i=j\\
        -1   & |i-j|=1 \\
        0   &  |i-j|\geq 2,
    \end{cases}
\]
and
$$w_0\cdot e_i = 0 \quad \text{for } 1\leq i \leq s,$$
$$w_j\cdot f_i = 0 \quad \text{for } 1\leq i \leq t\text{, and } 1\leq j \leq l.$$
In addition, $w_0$ is required to satisfy further conditions called the changemaker conditions.

Given an alternating diagram, $D$, with a chessboard colouring, let $\Lambda_D$ be the lattice defined by the associated Goeritz matrix. An isomorphism between $\Lambda_D$ and a changemaker lattice, $L$, allows us to label each unshaded region of $D$ by a vector in $L$. After performing some flypes, we show that the regions labeled by vectors satisfying $v\cdot e_i \ne 0$ for some $0\leq i \leq s$, determine a rational tangle of the required slope. If we replace this rational tangle with a single crossing $c$ to obtain an alternating diagram $D'$, then we will show that the Goeritz form of $D'$ is isomorphic to an $(n-1/2)$-changemaker lattice. This allows us to appeal to \cite{mccoy2013alternating} to show that $c$ is an unknotting crossing for $D'$ of the required sign.

\subsection{Further remarks}
The most obvious limitation of Theorem~\ref{thm:informaltheorem} is that it only describes non-integer surgeries. It is natural to wonder whether there is a similar characterisation of integer surgeries yielding double branched covers of alternating knots. The techniques in this paper do not extend to this case in any easy way. However, the author has not yet been unable to exhibit an example which shows that the integer case is different. So, although the possibility remains that an analogue of Theorem~\ref{thm:informaltheorem} may hold for integer surgeries, there are substantial algebraic complications to be dealt with in the proof of such a result.

\paragraph{}It also seems natural to ask about the class of knots which surger to give the double branched covers of alternating links. There are some cases for which this has been answered. For example, the cyclic surgery theorem of Culler, Gordon, Luecke and Shalen shows that torus knots are the only ones for which non-integer surgery can yield a lens space, i.e. the double-branched cover of a 2-bridge knot or link \cite{cglscyclic}. In general, this seems to be a hard question although it seems possible to give restrictions on the surgery slopes of a given knot which can give the double branched cover of an alternating link. We plan to return to this in future work.

\subsection{Acknowledgements}
The author would like to thank his supervisor, Brendan Owens, for his general guidance and helpful comments on this paper.

\section{Surgeries bounding sharp 4-manifolds}\label{sec:deficiencies}
In \cite{gibbons2013deficiency}, Gibbons proves a generalisation of the changemaker theorems appearing in \cite{Greene3Braid}, \cite{greene2010space} and \cite{GreeneLRP}. Before we state the theorem, we will briefly recall some of the necessary background on the $d$-invariants of Heegaard Floer homology.

\paragraph{}For $Y$ a rational homology 3-sphere, its Heegaard Floer homology splits as a direct sum over its \spinc-structures:
$$\widehat{HF}(Y)\cong \bigoplus_{\spincs \in \spinc(Y)}\widehat{HF}(Y,\spincs).$$
Associated to each summand there is a numerical invariant $d(Y,\spincs)\in \mathbb{Q}$, called the {\em $d$-invariant} \cite{Ozsvath03Absolutely}. If $Y$ is the boundary of a negative-definite 4-manifold $X$, then for any $\spinct \in \spinc(X)$ which restricts to $\spincs \in \spinc(Y)$ there is a bound on the  $d$-invariant:
\begin{equation}\label{eq:sharpdef}
c_1(\spinct)+b_2(X)\leq 4d(Y,\spincs).
\end{equation}
We say that $X$ is {\em sharp} if for every $\spincs \in \spinc(Y)$ there is some $\spinct \in \spinc(X)$ which restricts to $\spincs$ and attains equality in \eqref{eq:sharpdef}.

\paragraph{}Let $\kappa \subset S^3$ be a knot. For fixed $p/q \in \mathbb{Q}\setminus \{0\}$, there are canonical identifications \cite{ozsvath2011rationalsurgery}:
$$\spinc(S_{p/q}^3(\kappa)) \leftrightarrow \mathbb{Z}/p\mathbb{Z} \leftrightarrow \spinc(S_{p/q}^3(U)).$$
Using these identifications we are able define $\tilde{d}_i \in \mathbb{Q}$ for each $i \in \mathbb{Z}/p\mathbb{Z}$ by the formula
$$\tilde{d}_i:=d(S_{p/q}^3(\kappa),i)-d(S_{p/q}^3(U),i).$$
Now consider the values for which $\tilde{d}_i$ vanishes:
$$Z=\{0\leq i \leq p-1 \colon \tilde{d}_i =0\}.$$

\paragraph{}The generalisation of the changemaker theorems we are interested in is \cite[Theorem 1.2]{gibbons2013deficiency}.
\begin{thm}[Gibbons]\label{thm:Gibbons}
For $p/q>0$, suppose that $S_{p/q}^3(\kappa)$ bounds a negative-definite simply-connected 4-manifold, $X$, with intersection lattice $Q_X$. If $X$ is sharp and $|Z|>\min\{p-1,q\}$, then $Q_X$ is isomorphic to a $p/q$-changemaker lattice.
\end{thm}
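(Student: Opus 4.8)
The plan is to follow the template of Greene's changemaker theorems \cite{greene2010space, GreeneLRP, Greene3Braid}, of which this is the rational-surgery generalization, by computing the $d$-invariants of $Y=S^3_{p/q}(\kappa)$ in two independent ways and feeding the comparison into Donaldson's Theorem A. On one side, sharpness of $X$ means that for each $\spincs$ the bound \eqref{eq:sharpdef} is attained, so $d(Y,\spincs)$ is expressed entirely through the lattice $Q_X$ as a maximum of $(c_1(\spinct)^2+b_2(X))/4$ over characteristic vectors. On the other side, the Ozsv\'ath--Szab\'o rational surgery formula \cite{ozsvath2011rationalsurgery}, in the refined form of Ni--Wu \cite{ni2010cosmetic}, expresses $d(Y,\spincs)$ in terms of $d(S^3_{p/q}(U),\spincs)$ and the genus-type invariants of $\kappa$; this is exactly the content packaged into the quantities $\tilde d_i$ and the set $Z$. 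Matching these two descriptions is what will pin down the isomorphism type of $Q_X$.

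First I would produce a standard definite filling of the reversed boundary. Writing $p/q=[a_0,a_1,\dots,a_l]^-$ with $a_i\geq 2$ for $i\geq 1$, the surgery $S^3_{p/q}(\kappa)$ is realized by integer surgery on $\kappa$ together with a chain of unknots framed by the $a_i$; capping the corresponding two-handle cobordism gives a filling whose intersection lattice contains a linear sublattice spanned by vectors $w_1,\dots,w_l$ with precisely the tridiagonal Gram matrix appearing in the definition of a $p/q$-changemaker lattice, while $w_0$ records the framed trace of $\kappa$. Passing to the mirror if necessary and gluing this standard piece to $X$ along $Y$ produces a closed, smooth, definite four-manifold. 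Donaldson's Theorem A then forces its intersection form to be standard, and (after the usual sign convention) this yields an embedding of $Q_X$ as the orthogonal complement $\langle w_0,\dots,w_l\rangle^\bot$ inside $\mathbb{Z}^{t+s+1}=\langle f_1,\dots,f_t,e_0,\dots,e_s\rangle$, with the $w_i$ realized as images of the handle cores.

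The remaining and most delicate step is to verify the changemaker conditions on $w_0$, and this is where the hypotheses on $Z$ and sharpness are consumed. The invariants of $\kappa$ underlying the $\tilde d_i$ form a non-increasing sequence of non-negative integers with consecutive differences in $\{0,1\}$, and this staircase behaviour is precisely the combinatorial shadow of the changemaker inequalities: the coefficients of $w_0$ along $f_1,\dots,f_t$ and $e_0$, once reordered, each lie at most one above the sum of their predecessors. I would translate the values forced by sharpness, together with the vanishing $\tilde d_i=0$ for $i\in Z$, into these inequalities, while the orthogonality relations $w_0\cdot e_i=0$ for $i\geq 1$ and $w_j\cdot f_i=0$ for $j\geq 1$ come from the incidence pattern of the handle cores in the standard filling. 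The inequality $|Z|>\min\{p-1,q\}$ guarantees that enough $d$-invariants vanish to determine all of these coefficients and to enforce the separation of the $f$- and $e$-directions; without it the Donaldson embedding could a priori be compatible with a lattice that fails to be changemaker.

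The hard part will be the embedding analysis that converts the numerical $d$-invariant data into the rigid orthogonal-complement description. One must show that the Donaldson embedding can be normalized so that the chain vectors $w_1,\dots,w_l$ assume the stated tridiagonal form and that $w_0$ is forced into changemaker position, ruling out the competing embeddings the standard lattice might otherwise admit. This is a substantial case analysis in Gibbons' argument, generalizing Greene's treatment of the integer and half-integer cases, and the condition $|Z|>\min\{p-1,q\}$ is exactly the combinatorial input that makes this rigidity go through.
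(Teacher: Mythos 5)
Your proposal cannot be checked against a proof in this paper, because the paper contains none: Theorem~\ref{thm:Gibbons} is imported verbatim from Gibbons \cite{gibbons2013deficiency} (Theorem~1.2 there), and the only work the paper does in its vicinity is the refinement Theorem~\ref{thm:Gibbonsrefined}, namely verifying via the Ni--Wu formula \eqref{eqn:NiWuVonly}, Lemma~\ref{lem:vanishingcountlemma} and Lemma~\ref{lem:Greenerefined} that the hypothesis $|Z|>\min\{p-1,q\}$ holds automatically once $S^3_{p/q}(\kappa)$ bounds a definite manifold. (The paper even remarks that Gibbons proved the theorem under a weaker hypothesis on $|Z|$.) That said, your outline does reconstruct the correct architecture of Gibbons' external argument, which generalizes \cite{greene2010space}, \cite{GreeneLRP}, \cite{Greene3Braid}: build the standard definite filling from the continued fraction $p/q=[a_0,\dotsc,a_l]^-$ with $a_i\geq 2$ for $i\geq 1$, glue it to $X$ along the boundary, apply Donaldson's Theorem~A to get an embedding into the diagonal lattice, and then use sharpness to constrain the images $w_0,\dotsc,w_l$. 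One small orientation point: no mirroring is needed in the setup of the statement itself --- the negative-definite $X$ with $\partial X=S^3_{p/q}(\kappa)$ glues directly to the reversed positive-definite chain filling; the mirror $S^3_{-p/q}(\overline{\kappa})$ only enters when the paper deduces Theorem~\ref{thm:Gibbonsrefined}.

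As a proof, however, your proposal has a genuine gap: the entire content of the theorem is concentrated in the step you defer as ``a substantial case analysis,'' namely showing that the Donaldson embedding can be rigidified so that $w_1,\dotsc,w_l$ assume the tridiagonal chain form with the stated support conditions ($w_0\cdot e_i=0$ for $i\geq 1$, $w_j\cdot f_i=0$ for $j\geq 1$) and that the coefficients of $w_0$ satisfy the changemaker inequalities; without this the argument establishes only that $Q_X$ embeds as \emph{some} orthogonal complement in $\mathbb{Z}^{t+s+1}$. Moreover, the mechanism you propose for the changemaker condition is not the right one. The staircase property of the Ni--Wu sequence $V_j$ (non-increasing, with consecutive differences in $\{0,1\}$) is not ``the combinatorial shadow'' of the changemaker inequalities: in the Greene--Gibbons argument the changemaker condition arises, via Brown's subset-sum criterion (Proposition~\ref{prop:CMcondition}), from the existence for each ${\rm spin}^c$ structure of a characteristic covector attaining equality in \eqref{eq:sharpdef}, and from the controlled way these sharp covectors vary between adjacent ${\rm spin}^c$ structures; the $V_j$ staircase enters only in computing the vanishing set $Z$, exactly as in Lemma~\ref{lem:vanishingcountlemma}, and the hypothesis $|Z|>\min\{p-1,q\}$ is consumed in ruling out degenerate embeddings rather than in ``determining all of the coefficients'' directly. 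Supplying these steps is precisely Gibbons' proof, so your text is an accurate table of contents for it but not a proof.
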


Gibbons originally established Theorem~\ref{thm:Gibbons} for smaller values of $|Z|$ than we have stated here. However, since the condition $S_{p/q}(\kappa)$ bounds a negative-definite manifold is sufficient to imply our stronger requirement on $|Z|$, Theorem~\ref{thm:Gibbons} is sufficient for our purposes.

\paragraph{}The work of Ni and Wu shows that for $0\leq i \leq p-1$, the values $\tilde{d}_i$ may be calculated by the formula \cite[Proposition 1.6]{ni2010cosmetic},
\begin{equation}\label{eqn:NiWuHV}
\tilde{d}_i=-2\max\{V_{\lfloor \frac{i}{q} \rfloor},H_{\lfloor \frac{i-p}{q} \rfloor}\},
\end{equation}
where $V_j$ and $H_j$ are sequences of positive integers depending only on $\kappa$, which are non-increasing and non-decreasing respectively. These further satisfy $H_{-j}=V_j=0$ for $j\geq g(\kappa)$, where $g(\kappa)$ is the genus of $\kappa$. In fact, it can be shown that $V_j=H_{-j}$ for all $j$ \cite[Proof of Theorem~3]{owensstrle2013immersed}. Using these properties of the $V_j$ and $H_j$, \eqref{eqn:NiWuHV} can be rewritten as
\begin{equation}\label{eqn:NiWuVonly}
\tilde{d}_i=-2V_{\min \{ \lfloor \frac{i}{q}\rfloor , \lceil \frac{p-i}{q}\rceil \}}.
\end{equation}
\paragraph{}Let $\tilde{g}(\kappa)\geq 0$ be the minimal integer such that $V_{\tilde{g}}=0$. Such an integer necessarily exists and is at most $g(\kappa)$. In fact, if $\kappa$ is an $L$-space knot, then $\tilde{g}(\kappa)= g(\kappa)$.

We can compute the size of $Z$ in terms of  $\tilde{g}(\kappa)$.
\begin{lem}\label{lem:vanishingcountlemma}
The size of $Z$ is given by
$$|Z|=
\begin{cases}
  p          & \text{if } \tilde{g}= 0 \\
  p-(2\tilde{g}-1)q  & \text{if } p/q>2\tilde{g}-1>0\\
  0  & \text{if } p/q\leq 2\tilde{g}-1.
\end{cases}$$
\end{lem}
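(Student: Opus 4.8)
The plan is to read $Z$ directly off the closed formula \eqref{eqn:NiWuVonly} and then reduce everything to counting integer points in an interval. The only input I need about the sequence $V_j$ is that, being a non-increasing sequence of non-negative integers whose smallest vanishing index is $\tilde{g}$, it satisfies $V_j=0$ exactly when $j\geq \tilde{g}$. Feeding this into \eqref{eqn:NiWuVonly} gives, for $0\leq i\leq p-1$,
$$\tilde{d}_i=0 \iff \min\Bigl\{\bigl\lfloor \tfrac{i}{q}\bigr\rfloor,\ \bigl\lceil \tfrac{p-i}{q}\bigr\rceil\Bigr\}\geq \tilde{g},$$
so that $i\in Z$ precisely when both $\lfloor i/q\rfloor\geq \tilde{g}$ and $\lceil (p-i)/q\rceil\geq \tilde{g}$.

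Next I would turn these two floor/ceiling inequalities into plain linear inequalities on $i$, using that $\tilde{g}$ is an integer. For the floor, $\lfloor i/q\rfloor\geq \tilde{g}$ is equivalent to $i\geq \tilde{g}q$. For the ceiling, $\lceil (p-i)/q\rceil\geq \tilde{g}$ is equivalent to $p-i>(\tilde{g}-1)q$, that is, to $i\leq p-(\tilde{g}-1)q-1$. Hence $Z$ is exactly the set of integers lying in the interval $[\tilde{g}q,\ p-(\tilde{g}-1)q-1]$, once one checks that for $\tilde{g}\geq 1$ the ambient constraints $0\leq i\leq p-1$ are automatically implied (indeed $\tilde{g}q\geq q\geq 1>0$ and $(\tilde{g}-1)q\geq 0$).

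The count then splits into the three stated cases. If $\tilde{g}=0$ every index satisfies the condition, so $|Z|=p$. If $\tilde{g}\geq 1$, the interval contains
$$\bigl(p-(\tilde{g}-1)q-1\bigr)-\tilde{g}q+1=p-(2\tilde{g}-1)q$$
integers when this quantity is positive, and is empty otherwise. The interval is non-empty exactly when $\tilde{g}q\leq p-(\tilde{g}-1)q-1$, which rearranges to $(2\tilde{g}-1)q<p$, i.e. $p/q>2\tilde{g}-1$. This gives $|Z|=p-(2\tilde{g}-1)q$ in the middle case and $|Z|=0$ when $p/q\leq 2\tilde{g}-1$, exactly as claimed.

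I do not expect a genuine obstacle, as the argument is pure bookkeeping. The only step demanding real care is pinning down the ceiling inequality and the endpoints of the interval precisely, in particular the strict-versus-weak equivalence $\lceil x\rceil\geq m\iff x>m-1$ for integer $m$, so that the off-by-one count comes out right and the boundary value $p/q=2\tilde{g}-1$ falls into the vanishing branch rather than the middle one.
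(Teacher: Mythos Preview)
Your proof is correct and follows essentially the same approach as the paper: both use \eqref{eqn:NiWuVonly} to identify $Z$ with the integers in the interval $[\tilde{g}q,\ p+q-\tilde{g}q-1]$ (your endpoint $p-(\tilde{g}-1)q-1$ is the same number) and count accordingly. You are simply more explicit than the paper about converting the floor and ceiling inequalities and checking that the ambient bounds $0\leq i\leq p-1$ are automatic for $\tilde{g}\geq 1$.
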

\begin{proof}
For $0\leq i \leq p-1$, \eqref{eqn:NiWuVonly} shows that $\tilde{d}_i=0$, if and only if $\min \{\lfloor \frac{i}{q}\rfloor , \lceil \frac{p-i}{q}\rceil \} \geq \tilde{g}$. If $\tilde{g}(\kappa)=0$, then this is true for all $i$. If $\tilde{g}(\kappa)>0$, then $Z$ consists of the values in the range,
$$\tilde{g}q\leq i \leq p+q-\tilde{g}q-1.$$
This range is nonempty only if $p/q> 2\tilde{g}-1$, in which case it contains $p-(2\tilde{g}-1)q$ values.
\end{proof}

If $\kappa$ is an $L$-space knot and $S_{n}^3(\kappa)$ bounds a simply-connected, negative-definite smooth manifold for some integer $n>0$, then Greene has shown the bound \cite[Theorem 1.1]{greene2010space},
\begin{equation}\label{eq:greenebound}
2g(\kappa)\leq n-\sqrt{n}.
\end{equation}
A similar result holds when the $L$-space condition is omitted.
\begin{lem}\label{lem:Greenerefined} Suppose that $S_{p/q}^3(\kappa)$, bounds a smooth, simply-connected, negative-definite manifold, $X$, then
$$2\tilde{g}(\kappa) \leq n-\sqrt{n},$$
where $n=\lceil p/q \rceil > 0$.
\end{lem}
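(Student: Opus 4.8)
The plan is to adapt the proof of Greene's bound \eqref{eq:greenebound} to the present setting, where two features change: the surgery slope is rational rather than integral, and $\kappa$ need not be an $L$-space knot. The algebraic target is a vector $\sigma$ in a standard Euclidean lattice with $\sigma\cdot\sigma=n=\lceil p/q\rceil$ whose coordinate sum controls $\tilde g(\kappa)$; the elementary inequality $\sum_i\sigma_i\ge\sqrt n$ then supplies the square root. To produce such a vector I would run a Donaldson-type argument. Writing $p/q=[a_0,\dots,a_l]^-$ with $a_0=n$ and $a_i\ge 2$ for $i\ge 1$, the manifold $S^3_{p/q}(\kappa)$ is the boundary of the positive-definite linear plumbing $P$ obtained by attaching $2$-handles along $\kappa$ (framing $a_0$) and a chain of unknots (framings $a_i$). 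Gluing the given negative-definite filling $X$ to the orientation reversal of this plumbing yields a smooth, closed, simply-connected, negative-definite $4$-manifold $Z$ (simple-connectivity follows from van Kampen since both pieces are simply-connected). By Donaldson's theorem A the intersection lattice of $Z$ is diagonal, so $-Q_X$ is realised as the orthogonal complement $\langle w_0,\dots,w_l\rangle^\perp$ inside a standard lattice $\mathbb Z^N$, where the $w_i$ are the images of the plumbing spheres and in particular $w_0\cdot w_0=a_0=n$. After changing the signs of the standard basis vectors we may assume $w_0$ has non-negative coordinates, whence $\bigl(\sum_i (w_0)_i\bigr)^2\ge \sum_i (w_0)_i^2=n$, giving $\sum_i(w_0)_i\ge\sqrt n$.

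The crux is then to establish $2\tilde g(\kappa)\le n-\sum_i(w_0)_i$. For integral surgery on an $L$-space knot this is exactly the relation $2g=n-\sum_i\sigma_i$ that underlies \eqref{eq:greenebound}, proved there by matching the $d$-invariants of $S^3_n(\kappa)$ with the norms of characteristic covectors produced by the embedding. Here I would instead feed in the negative-definite inequality \eqref{eq:sharpdef} for $X$ together with the exact values of $d(S^3_{p/q}(U),i)$ coming from the fact that the plumbing $P$ is a sharp filling of the lens space $S^3_{p/q}(U)$, and compare the two using the Ni--Wu formula \eqref{eqn:NiWuVonly}: the difference of $d$-invariants is $-2V_j$, so controlling the largest index $j=\tilde g-1$ with $V_j>0$ against the lattice data yields the stated inequality. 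Crucially only the inequality direction is required, so sharpness of $X$ is never used; this is precisely what permits dropping the $L$-space hypothesis and replacing $g$ by $\tilde g$.

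I expect this final step to be the main obstacle. Extracting $2\tilde g\le n-\sum_i(w_0)_i$ from \eqref{eq:sharpdef} requires, for the critical \spinc structures, identifying characteristic covectors in $\langle w_0,\dots,w_l\rangle^\perp$ whose norms are pinned down by the plumbing chain, and this bookkeeping is more delicate for $q>1$ than in Greene's integral case because each residue class modulo $q$ must be tracked through the floor and ceiling in \eqref{eqn:NiWuVonly}. A cleaner alternative I would also explore is to argue directly that a negative-definite filling of $S^3_{p/q}(\kappa)$ forces one of $S^3_n(\kappa)$, reducing at once to the integral statement; but verifying that the capping cobordism preserves negative-definiteness involves exactly the orientation subtleties that make the direct lattice argument the safer route.
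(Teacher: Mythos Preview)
Your proposal actually contains the paper's own argument, but you relegate it to an ``alternative I would also explore'' and then talk yourself out of it on grounds of orientation subtleties. In fact the paper proceeds exactly by that reduction: the chain of unknots in your plumbing $P$ (the $2$-handles with framings $a_1,\dots,a_l$) constitutes a positive-definite cobordism $W$ from $S^3_n(\kappa)$ to $S^3_{p/q}(\kappa)$, and gluing $\overline{W}$ to $X$ produces a simply-connected negative-definite filling of $S^3_n(\kappa)$. There is no genuine orientation subtlety here---definiteness of a manifold glued along a rational homology sphere is governed by the direct sum of the two intersection forms over $\mathbb{Q}$, and both pieces are negative-definite. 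Once one is in the integral case, the paper simply observes that Greene's proof of \eqref{eq:greenebound} invokes the $L$-space hypothesis only to know that $d(S^3_n(\kappa),i)-d(S^3_n(U),i)\le 0$ with equality exactly for $i\ge g(\kappa)$; by \eqref{eqn:NiWuVonly} the same statement holds in general with $\tilde g$ in place of $g$, so Greene's argument runs verbatim and yields $2\tilde g(\kappa)\le n-\sqrt{n}$.

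Your primary route---running the Donaldson/characteristic-covector argument directly for the rational slope against the full chain $w_0,\dots,w_l$---may be workable, but you correctly flag the step $2\tilde g\le n-\sum_i(w_0)_i$ as the crux and do not carry it out. That is a genuine gap as the proposal stands: tracking \spinc-structures through the floors and ceilings in \eqref{eqn:NiWuVonly} and through the whole linear chain is exactly the bookkeeping the cobordism reduction lets you bypass. So your assessment is inverted: the route you set aside is the clean one the paper takes, while the one you favour is left incomplete.
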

\begin{proof}[Proof (sketch)]
There is a positive-definite cobordism $W$ from $S_{n}^3(\kappa)$ to $S_{p/q}^3(\kappa)$, obtained by attaching 2-handles to $S_{n}^3(\kappa)\times I$. Gluing $\overline{W}$ to $X$ along $S_{p/q}^3(\kappa)$ shows that $S_{n}^3(\kappa)$ also bounds a simply-connected, negative-definite smooth manifold.

\paragraph{}Now we consider the proof of \eqref{eq:greenebound} \cite[Theorem 1.1]{greene2010space}. We observe that the $L$-space condition is only required to show that
$$d(S_{n}^3(\kappa),i)-d(S_{n}^3(U),i)\leq 0,$$
for all $i$ with equality if and only if $i\geq g(\kappa)$. However, \eqref{eqn:NiWuVonly} shows that in general,
$$d(S_{n}^3(\kappa),i)-d(S_{n}^3(U),i)\leq 0,$$
for all $i$, with equality if and only if $i\geq \tilde{g}(\kappa)$. So the same argument shows
$$2\tilde{g}(\kappa) \leq n-\sqrt{n},$$
as required.
\end{proof}
Now we can combine Lemma~\ref{lem:Greenerefined} and Lemma~\ref{lem:vanishingcountlemma} to show that Theorem~\ref{thm:Gibbonsrefined} follows from Theorem~\ref{thm:Gibbons}.

\begin{proof}[Proof of Theorem~\ref{thm:Gibbonsrefined}]
We need only show that the hypothesis on the set $Z$ is satisfied.
Let $n=\lceil p/q \rceil$. If $n=1$ or 2, then Lemma~\ref{lem:Greenerefined} shows that
$\tilde{g}=0$, which implies that $|Z|=p$. If $n>2$, then Lemma~\ref{lem:Greenerefined} and Lemma~\ref{lem:vanishingcountlemma} give
 $$|Z|\geq p+q-qn+\sqrt{n}q>\sqrt{n}q>q.$$
 This shows $|Z|>\min \{p-1,q\}$, as required. Theorem~\ref{thm:Gibbonsrefined} follows, since  $$-{S_{p/q}^3(\kappa)}=S_{-p/q}^3(\overline\kappa).$$
\end{proof}

\section{Graph lattices}
We briefly recall the definition of a graph lattice and a few of their key properties. All the results in this section can be found complete with proof in \cite{mccoy2013alternating}.

\paragraph{} Let $G=(V,E)$ be a finite, connected, undirected graph with no self-loops. For a pair of disjoint subsets $R,S \subset V$, let $E(R,S)$ be the set of edges between $R$ and $S$. Define $e(R,S)=|E(R,S)|$. We will use the notation $d(R)=e(R,V\setminus R)$.

\paragraph{} Let $\overline{\Lambda}(G)$ be the free abelian group generated by $v\in V$. Define a symmetric bilinear form on $\overline{\Lambda}(G)$ by
\[
v\cdot w =
  \begin{cases}
   d(v)            & \text{if } v=w \\
   -e(v,w)       & \text{if } v\ne w.
  \end{cases}
\]
In this section we will use the notation $[R]=\sum_{v\in R}v$, for $R\subseteq V$. The above definition gives
\[
v\cdot [R] =
  \begin{cases}
   -e(v,R)            & \text{if } v\notin R \\
   e(v,V\setminus R)       & \text{if } v\in R.
  \end{cases}
\]

From this it follows that $[V]\cdot x= 0$ for all $x \in \overline{\Lambda}(G)$. We define the {\em graph lattice} of $G$ to be
$$\Lambda(G):= \frac{\overline{\Lambda}(G)}{\mathbb{Z}[V]}.$$
The bilinear form on $\overline{\Lambda}(G)$ descends to $\Lambda(G)$. Since we have assumed that $G$ is connected, the pairing on $\Lambda(G)$ is positive-definite. This makes $\Lambda(G)$ into an integral lattice. Henceforth, we will abuse notation by using $v$ to denote its image in $\Lambda(G)$.

We have the following useful bound:
\begin{lem}\label{lem:usefulbound}
Let $x=[R]$ be a sum of vertices, then for any $z\in \Lambda(G)$, we have
$$(x-z)\cdot z\leq 0.$$
\end{lem}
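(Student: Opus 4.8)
The plan is to rewrite the bilinear form as a sum over the edges of $G$ and then reduce the inequality to an elementary estimate carried out one edge at a time. Lift $z$ to an integral vector $z=\sum_{v\in V}z_v v$ in $\overline{\Lambda}(G)$, and write $x=[R]=\sum_{v\in V}x_v v$, where $x_v=1$ for $v\in R$ and $x_v=0$ otherwise.

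First I would establish the incidence (discrete Dirichlet) formula: for any $a=\sum_v a_v v$ and $b=\sum_v b_v v$,
$$a\cdot b=\sum_{\epsilon=\{v,w\}\in E}(a_v-a_w)(b_v-b_w),$$
the sum being over edges counted with multiplicity. This follows directly from the definition of the form by expanding $d(v)=\sum_{w\neq v}e(v,w)$ and grouping the contributions of each pair $\{v,w\}$, which combine into $e(v,w)(a_v-a_w)(b_v-b_w)$. Two observations make this the right tool. The right-hand side depends only on the differences $a_v-a_w$, so it is manifestly unchanged when a multiple of $[V]$ is added to $a$ or $b$; this re-proves that the form descends to $\Lambda(G)$ and shows that $(x-z)\cdot z$ is independent of the chosen lift of $z$.

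Applying the formula to $a=x-z$ and $b=z$, and writing $\delta_x(\epsilon)=x_v-x_w$ and $\delta_z(\epsilon)=z_v-z_w$ for an arbitrarily oriented edge $\epsilon=\{v,w\}$, I obtain
$$(x-z)\cdot z=\sum_{\epsilon\in E}\bigl(\delta_x(\epsilon)-\delta_z(\epsilon)\bigr)\,\delta_z(\epsilon).$$
Because $x=[R]$ has all coordinates in $\{0,1\}$, every $\delta_x(\epsilon)$ lies in $\{-1,0,1\}$, while each $\delta_z(\epsilon)$ is an integer. The lemma then follows from the elementary fact that for every integer $m$ and every $\eta\in\{-1,0,1\}$ one has $\eta m\le |m|\le m^2$, hence $(\eta-m)m=\eta m-m^2\le 0$. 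Summing these non-positive edge contributions gives $(x-z)\cdot z\le 0$.

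There is essentially no hard step here: once the edge formula is in hand the argument is purely local and combinatorial, and the only place the hypothesis $x=[R]$ enters is in bounding $|\delta_x(\epsilon)|\le 1$. The one point requiring care is the reduction to integer coordinates — I would make explicit that every class in $\Lambda(G)$ has an integral representative, so that each $\delta_z(\epsilon)$ is genuinely an integer. This is what makes the step $|m|\le m^2$ available, since its real-valued analogue fails for $0<|m|<1$ and the per-edge inequality would otherwise break down.
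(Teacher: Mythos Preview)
Your proof is correct. The edge (Laplacian/Dirichlet) formula
\[
a\cdot b=\sum_{\{v,w\}\in E}(a_v-a_w)(b_v-b_w)
\]
is the standard identity for the graph Laplacian, and once it is in hand the per-edge estimate $(\eta-m)m\le 0$ for $\eta\in\{-1,0,1\}$ and $m\in\mathbb{Z}$ finishes the job exactly as you say. Your remark that integrality of the lift is what makes $|m|\le m^2$ valid is well observed.

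As for comparison: the paper does not actually prove this lemma. The section on graph lattices explicitly defers all proofs to \cite{mccoy2013alternating}, so there is no in-paper argument to set yours against. Your approach is in any case the natural one for results of this type and is essentially the same as the argument one finds in that reference (which also passes through the edge decomposition of the form).
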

We say that $z \in \Lambda(G)$ is {\em irreducible} if we cannot find $x,y \in \Lambda(G) \setminus \{0\}$ such that $z=x+y$ and $x\cdot y \geq 0$. The irreducible vectors of $\Gamma(G)$ can be characterised as  follows.
\begin{lem}\label{lem:irreducible}
The vector $x \in \Lambda(G)\setminus \{0\}$ is irreducible if, and only if, $x=[R]$ for some $R\subseteq V$ such that $R$ and $V\setminus R$ induce connected subgraphs of $G$.
\end{lem}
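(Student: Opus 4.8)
The plan is to reduce everything to an edge-by-edge computation using the Dirichlet form of $G$. Expanding the defining pairing shows that if $x,y\in\Lambda(G)$ are represented by functions $f,g\colon V\to\mathbb{Z}$ (each unique up to an additive constant), then
$$x\cdot y=\sum_{uw\in E}\bigl(f(u)-f(w)\bigr)\bigl(g(u)-g(w)\bigr).$$
In particular $[R]$ is represented by the characteristic function $\chi_R$ and $|[R]|^2=e(R,V\setminus R)$. For a class $x\neq 0$ I will always normalise its representative $f$ so that $\min_v f(v)=0$; then $R:=\{v:f(v)\ge 1\}$ is a proper, nonempty subset of $V$, and $x=[R]$ precisely when $\max_v f(v)=1$.

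First I would prove that if $R$ and $V\setminus R$ both induce connected subgraphs then $[R]$ is irreducible. Suppose $[R]=x_1+x_2$ with $x_1,x_2\neq0$ and $x_1\cdot x_2\ge0$. Lemma~\ref{lem:usefulbound}, applied to the sum of vertices $[R]$ with $z=x_1$, gives $x_1\cdot x_2=([R]-x_1)\cdot x_1\le0$, so in fact $x_1\cdot x_2=0$. Representing $x_1$ by $f$, the edge form yields
$$0=-x_1\cdot x_2=\sum_{uw\in E}t(t-s),\qquad t=f(u)-f(w),\ s=\chi_R(u)-\chi_R(w).$$
Each summand is non-negative, since $s\in\{-1,0,1\}$ forces $t(t-s)\in\{t^2,\,t(t-1),\,t(t+1)\}$, all $\ge0$ for integer $t$; hence every summand vanishes. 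Thus $t=0$ on every non-cut edge, so by connectivity $f\equiv\alpha$ on $R$ and $f\equiv\beta$ on $V\setminus R$, while the cut edges force $\alpha-\beta\in\{0,1\}$. If $\alpha=\beta$ then $x_1=0$; if $\alpha-\beta=1$ then $f=\chi_R$ up to a constant and $x_1=[R]$, so $x_2=0$. Both contradict $x_1,x_2\neq0$, so $[R]$ is irreducible.

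Conversely I would show that any $x\neq0$ not of this form is reducible, in two cases. If $x=[R]$ with $R$ disconnected, split $R=R_1\sqcup R_2$ into nonempty parts with $e(R_1,R_2)=0$; then $[R]=[R_1]+[R_2]$ with $[R_1]\cdot[R_2]=-e(R_1,R_2)=0$, exhibiting $[R]$ as reducible (the case where $V\setminus R$ is disconnected is handled by passing to $-x=[V\setminus R]$). Otherwise the normalised representative has $\max_v f(v)\ge2$; put $R=\{f\ge1\}$, $x_1=[R]$ and $x_2=x-[R]$. Both are nonzero, since $R$ is a proper nonempty subset and $x_2$ is represented by the nonconstant function $f-\chi_R$. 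In the edge form only cut edges contribute to $x_1\cdot x_2$, and on a cut edge with $u\in R$, $w\notin R$ (so $f(w)=0$) the contribution is $f(u)-1\ge0$; hence $x_1\cdot x_2\ge0$ and $x$ is reducible.

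The hard part will be the second case of the converse: for a non-indicator class one must produce an explicit decomposition with non-negative self-pairing whose two summands are genuinely nonzero proper classes. Splitting at the support of the normalised representative is exactly what forces each cut contribution to equal $f(u)-1\ge0$, and the analogous integer inequalities $t(t-s)\ge0$ drive the irreducibility argument as well. Thus the whole proof rests on this elementary but essential edge-by-edge positivity, together with the use of Lemma~\ref{lem:usefulbound} to upgrade $x_1\cdot x_2\ge0$ to equality in the forward direction.
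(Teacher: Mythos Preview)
The paper does not actually prove this lemma; it simply states that all results in Section~3 can be found with proof in \cite{mccoy2013alternating}. So there is no in-paper argument to compare against, and your task reduces to giving a correct self-contained proof.

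Your argument is correct. The identification of the pairing with the Dirichlet form
\[
x\cdot y=\sum_{uw\in E}(f(u)-f(w))(g(u)-g(w))
\]
is the standard Laplacian identity for graph lattices, and everything you do afterwards follows from it cleanly. In the forward direction, using Lemma~\ref{lem:usefulbound} to upgrade $x_1\cdot x_2\ge 0$ to equality is the right move, and the edgewise non-negativity $t(t-s)\ge 0$ for $s\in\{-1,0,1\}$ then forces the representative of $x_1$ to be constant on each of $R$ and $V\setminus R$ with difference in $\{0,1\}$. In the converse, the threshold decomposition $x=[R]+(x-[R])$ with $R=\{f\ge 1\}$ is exactly the natural choice, and your check that both pieces are nonzero and pair non-negatively is correct (in particular, $f(w)=0$ for $w\notin R$ since you normalised $\min f=0$).

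Two small presentational points. First, your use of ``cut edge'' for an edge of $E(R,V\setminus R)$ clashes with the paper's usage elsewhere (where cut-edge means a bridge); you might say ``crossing edge'' or ``edge of the cut $(R,V\setminus R)$'' to avoid confusion. Second, in the disconnected case you should note explicitly that $[R_1],[R_2]\ne 0$ because $R_1,R_2$ are proper nonempty subsets of $V$; this is immediate but worth a clause. Otherwise the write-up is complete and would serve well as the missing proof.
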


Recall that a connected graph is {\em 2-connected} if it can not be disconnected by deleting a vertex. This property is equivalent to $\Lambda(G)$ being {\em indecomposable}, that is, $\Lambda(G)$ cannot be written as the orthogonal direct sum $\Lambda(G)= L_1 \oplus L_2$ with $L_1,L_2$ non-zero sublattices.
\begin{lem}\label{lem:2connectgraphlat}
The following are equivalent:
\begin{enumerate}[(i)]
\item The graph $G$ is 2-connected;
\item Every vertex $v\in V$ is irreducible;
\item The lattice $\Lambda(G)$ is indecomposable.
\end{enumerate}
\end{lem}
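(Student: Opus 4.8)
The plan is to prove the cycle $(i)\Leftrightarrow(ii)$, then $(ii)\Rightarrow(iii)$, then $(iii)\Rightarrow(i)$, using Lemma~\ref{lem:irreducible} as the dictionary between the combinatorics of $G$ and the geometry of $\Lambda(G)$. Throughout I would assume $|V|\ge 2$, the single-vertex case being trivial. The equivalence $(i)\Leftrightarrow(ii)$ is immediate from Lemma~\ref{lem:irreducible} applied to the singleton $R=\{v\}$: since a single vertex always induces a connected subgraph, the vector $v$ is irreducible precisely when $V\setminus\{v\}$ induces a connected subgraph, i.e. precisely when deleting $v$ does not disconnect $G$. Quantifying over all $v\in V$, this says that every vertex is irreducible if and only if $G$ cannot be disconnected by deleting a vertex, which is exactly 2-connectedness.

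For $(ii)\Rightarrow(iii)$ I would argue by contradiction. Suppose every vertex is irreducible but $\Lambda(G)=L_1\oplus L_2$ is a nontrivial orthogonal decomposition. Writing the image of each vertex as $v=v_1+v_2$ with $v_i\in L_i$, orthogonality gives $v_1\cdot v_2=0\ge 0$, so irreducibility of $v$ forces $v_1=0$ or $v_2=0$; that is, each vertex lies in $L_1$ or in $L_2$. Since $v\cdot v=d(v)>0$ each vertex image is nonzero, so this produces a genuine partition $V=V_1\sqcup V_2$; both parts are nonempty because the vertices generate $\Lambda(G)$ (if $V_2$ were empty we would get $\Lambda(G)\subseteq L_1$ and hence $L_2=0$). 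Finally, for $v\in V_1$ and $w\in V_2$ orthogonality gives $-e(v,w)=v\cdot w=0$, so there are no edges between $V_1$ and $V_2$, contradicting the connectedness of $G$.

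For $(iii)\Rightarrow(i)$ I would prove the contrapositive by constructing an explicit splitting from a cut vertex. If $G$ is not 2-connected it has a cut vertex $v$, so $V\setminus\{v\}=A\sqcup B$ with $A,B$ nonempty and no edges between them. Put $G_1=G[A\cup\{v\}]$ and $G_2=G[B\cup\{v\}]$. Taking the images of $A\cup B$ (obtained by dropping $v$) as a basis of $\Lambda(G)$, and using that $[V]$ lies in the radical so inner products of vertex images may be computed in $\overline{\Lambda}(G)$, the absence of $A$--$B$ edges makes the Gram matrix block diagonal; hence the sublattices $\langle A\rangle$ and $\langle B\rangle$ are orthogonal and together span $\Lambda(G)$. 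The key point is that $\langle A\rangle\cong\Lambda(G_1)$: every neighbour of a vertex $a\in A$ lies in $A\cup\{v\}=V(G_1)$, so $d(a)$ and the edge-counts $e(a,a')$ are unchanged on passing to $G_1$, whence $a\mapsto a$ is an isometry $\Lambda(G_1)\to\langle A\rangle$, and likewise for $B$. Thus $\Lambda(G)\cong\Lambda(G_1)\oplus\Lambda(G_2)$ with both summands nonzero, so $\Lambda(G)$ is decomposable.

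I expect the cut-vertex splitting in $(iii)\Rightarrow(i)$ to be the main obstacle. The delicate part is verifying that restriction to $G_1$ preserves the bilinear form on the $A$-vectors: this relies precisely on the cut-vertex property guaranteeing that no edge leaves $A$ except into $v$, so that no degree or edge-count is lost in the induced subgraph. Once this identification is in place the orthogonality and spanning statements are routine, and the argument for $\langle B\rangle$ is symmetric.
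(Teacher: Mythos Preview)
Your proof is correct. The paper does not actually prove this lemma; it states it and refers to \cite{mccoy2013alternating} for the proof, so there is no in-paper argument to compare against. Your cycle $(i)\Leftrightarrow(ii)$, $(ii)\Rightarrow(iii)$, $(iii)\Rightarrow(i)$ is clean and uses Lemma~\ref{lem:irreducible} in the expected way.

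One minor remark: in $(iii)\Rightarrow(i)$ you call the identification $\langle A\rangle\cong\Lambda(G_1)$ the ``key point'', but for the decomposability statement itself you only need that $\langle A\rangle$ and $\langle B\rangle$ are nonzero, orthogonal, and span $\Lambda(G)$, all of which you have already established from the block-diagonal Gram matrix. The isomorphism with $\Lambda(G_1)$ is a pleasant bonus (and your verification of it is correct), but the contrapositive goes through without it.
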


The following will also be useful.
\begin{lem}\label{lem:cutedge}
Suppose that $G$ is 2-connected, contains no cut-edges and there is a vertex $v$ such that we can find $x,y\in \Lambda(G)$, with $v=x+y$ and $x\cdot y=-1$. Then there is a cut edge $e$ in $G\setminus \{v\}$ and if $R,S$ are the vertices of the two components of $(G\setminus \{v\})\setminus\{e\}$ then $\{x,y\}=\{[R]+v,[S]+v\}$. Furthermore, there are unique vertices $u_1,u_2\ne v$, with $x\cdot u_1=y\cdot u_2=1$, and any vertex $w \notin \{v,u_1,u_2\}$ satisfies $w\cdot x,w\cdot y\leq 0$.
\end{lem}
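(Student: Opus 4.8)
The plan is to pass to coordinates. Represent $x$ and $y$ by integer-valued functions $a,b\colon V\to\mathbb{Z}$, each well defined up to adding a global constant (since $[V]=0$ in $\Lambda(G)$), and normalise the representatives so that $a+b$ is the indicator of $\{v\}$, i.e. $a_u+b_u=0$ for $u\ne v$ and $a_v+b_v=1$; write $\delta_v$ for this indicator, so that $b=\delta_v-a$. The pairing on $\Lambda(G)$ can be rewritten as the edge form
\[
p\cdot q=\sum_{\text{edges }(u,w)\text{ of }G}(p_u-p_w)(q_u-q_w),
\]
which one checks directly against the defining formula for $\cdot$; it is manifestly shift-invariant, so it is well defined on $\Lambda(G)$.

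Substituting $b=\delta_v-a$ into this formula for $x\cdot y$ and splitting the edge set into edges incident to $v$ and edges not incident to $v$ is the heart of the argument. A non-$v$ edge $(u,w)$ contributes $-(a_u-a_w)^2\le 0$, while a $v$-edge $(v,w)$ contributes $t_w(1-t_w)$ with $t_w=a_v-a_w$, which for integer $t_w$ equals $0$ when $t_w\in\{0,1\}$ and is at most $-2$ otherwise. Since every contribution is non-positive and each $v$-edge term is either $0$ or at most $-2$, the equality $x\cdot y=-1$ forces every $v$-edge to satisfy $a_v-a_w\in\{0,1\}$ and forces exactly one non-$v$ edge $e_0=u_0w_0$ to have $(a_{u_0}-a_{w_0})^2=1$, with $a$ constant across every other non-$v$ edge. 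Extracting this rigidity cleanly is the step I expect to require the most care, since it is what drives the entire statement.

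It then remains to read off the geometry. The last condition says $a$ is constant on each component of $(G\setminus\{v\})\setminus\{e_0\}$. As $G$ is $2$-connected, $G\setminus\{v\}$ is connected, so the jump across $e_0$ is only consistent if $e_0$ is a cut edge of $G\setminus\{v\}$; let $R,S$ be the vertex sets of its two components. Here the hypothesis that $G$ has no cut edge is essential: since $e_0$ is not a cut edge of $G$, the graph $G\setminus\{e_0\}$ is connected, which can only happen if $v$ has neighbours in both $R$ and $S$. Combined with $a_v-a_w\in\{0,1\}$ on the $v$-edges, this pins the two constant values of $a$ on $R$ and $S$ to $\{a_v,a_v-1\}$, so after the global shift $a$ is the indicator of $R\cup\{v\}$ and $b$ the indicator of $S\cup\{v\}$. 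Hence $\{x,y\}=\{[R]+v,[S]+v\}$, and $e(R,S)=1$ recovers that $e_0$ is the unique $R$--$S$ edge.

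For the remaining assertions I would set $A=R\cup\{v\}$ and use $u\cdot[A]=e(u,V\setminus A)$ for $u\in A$ and $u\cdot[A]=-e(u,A)$ for $u\notin A$. Since the only edge between $R$ and $S$ is $e_0$, this gives $x\cdot u=e(u,S)$ on $R\cup\{v\}$ and $x\cdot u\le 0$ on $S$, so among vertices $u\ne v$ the value $x\cdot u=1$ is attained exactly at $u_1=u_0$; the symmetric computation for $y=[S]+v$ gives $u_2=w_0$. The same case analysis shows $w\cdot x\le 0$ and $w\cdot y\le 0$ for every $w\notin\{v,u_0,w_0\}$, which completes the proof.
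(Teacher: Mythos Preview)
Your argument is correct. The edge-form identity $p\cdot q=\sum_{uw\in E}(p_u-p_w)(q_u-q_w)$ is exactly the graph-Laplacian quadratic form, and once one substitutes $b=\delta_v-a$ the decomposition into $v$-edges and non-$v$-edges gives precisely the parity/sign constraints you describe: each $v$-edge term is $0$ or $\le -2$, each non-$v$-edge term is $0$ or $\le -1$, so $x\cdot y=-1$ forces one non-$v$ edge with a unit jump and no other jumps. The deduction that $e_0$ is a cut edge of $G\setminus\{v\}$, that $v$ meets both sides (using the no-cut-edge hypothesis on $G$), and that the two plateaux of $a$ are $\{a_v,a_v-1\}$ is clean; the identification $\{x,y\}=\{[R]+v,[S]+v\}$ and the computation of $u\cdot[A]$ for $A=R\cup\{v\}$ then give the uniqueness of $u_1,u_2$ and the sign conditions exactly as you say. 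One cosmetic point: after your shift, $b=\delta_v-a$ is literally $-\mathbf{1}_R$ rather than $\mathbf{1}_{S\cup\{v\}}$, but these agree in $\Lambda(G)$ since $[V]=0$, so this does not affect the argument.

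As for comparison with the paper: this lemma is not proved in the present paper at all---it is quoted from \cite{mccoy2013alternating} (see the opening of the graph-lattices section). So there is no in-paper proof to compare against. Your coordinate/edge-form approach is a perfectly self-contained substitute and in fact makes the role of each hypothesis ($2$-connectivity for connectedness of $G\setminus\{v\}$; absence of cut-edges for forcing $v$ to neighbour both $R$ and $S$) transparent.
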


\section{Alternating diagrams}\label{sec:altdiagrams}
Given a diagram $D$ of a link $L$, we get a division of the plane into connected regions. We may colour these regions black and white in a chessboard manner. There are two possible choices of colouring, and each gives an incidence number, $\mu(c)\in \{\pm 1\}$, at each crossing $c$ of $D$, as shown in Figure~\ref{fig:incidencenumber}.
 \begin{figure}[h]
  \centering
  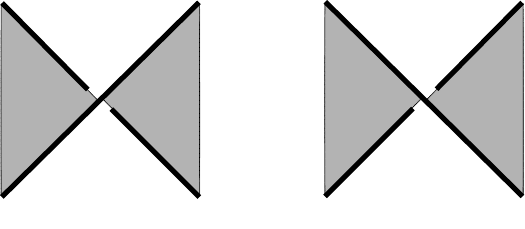
 \caption{The incidence number of a crossing.}
 \label{fig:incidencenumber}
\end{figure}
 We construct a planar graph, $\Gamma_D$, by drawing a vertex in each white region and an edge $e$ for every crossing $c$ between the two white regions it joins and we define $\mu(e):=\mu(c)$. We call this the {\em white graph} corresponding to $D$. This gives rise to a {\em Goeritz matrix}, $G_D=(G_{ij})$, defined by labeling the vertices of $\Gamma_D$, by $v_1,\dotsc , v_{r+1}$ and for $1\leq i,j \leq r$, set
 $$g_{ij}=\sum_{e \in E(v_i,v_j)}\mu(e)$$
 for $i\ne j$ and
 $$g_{ii} = - \sum_{e \in E(v_i, \Gamma_D\setminus v_i)}\mu(e)$$
 otherwise \cite[Chapter 9]{lickorish1997introduction}.

 \paragraph{} Now suppose that $L$ is an alternating, non-split link. If $D$ is any alternating diagram, then we may fix the colouring so that $\mu(c)=-1$ for all crossings. In this case, $G_D$ defines a positive-definite bilinear form. This in turn gives a lattice, $\Lambda_D$ which we will refer to as the {\em white lattice} of $D$. Observe that if $D$ is reduced (i.e. contains no nugatory crossings), then $\Gamma_D$ contains no self-loops or cut-edges and $\Lambda_D$ is isomorphic to the graph lattice $\Lambda(\Gamma_D)$. Moreover, if $D$ and $D'$ are any two reduced diagrams, then one can be obtained from another by a sequence of flypes. The following lemma, which is proven in \cite{mccoy2013alternating}, allows us to detect certain flypes algebraically and provides an explicit isomorphism by relating the vertices of $\Gamma_D$ and $\Gamma_{D'}$. The flypes in question are depicted in Figure~\ref{fig:flype1}.
 \begin{lem}\label{lem:flype1}
 Let $D$ be a reduced alternating diagram. Suppose $\Gamma_D$ is 2-connected and has a vertex $v$, which can be written as $v=x+y$, for some $x,y \in \Lambda_D$ with $x\cdot y=-1$. Then there are unique vertices, $u_1,u_2\ne v$, satisfying $u_1\cdot x, u_2\cdot y>0$, and there is a flype to a diagram $D'$ and an isomorphism $\Lambda_{D'}\cong \Lambda_{D}$, such that the vertices of $\Gamma_{D'}$ are obtained from those of $\Gamma_D$ by replacing $v,u_1$ and $u_2$, with $x,y$ and $u_1+u_2$.
 \end{lem}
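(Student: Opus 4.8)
The plan is to reduce the lattice-theoretic content to Lemma~\ref{lem:cutedge} and then supply the diagrammatic argument that realises the flype. Since $D$ is reduced, $\Gamma_D$ has no cut-edges, and it is $2$-connected by hypothesis, so Lemma~\ref{lem:cutedge} applies. It produces a cut-edge $e$ of $\Gamma_D\setminus\{v\}$, a partition $V\setminus\{v\}=R\sqcup S$ into the two components of $(\Gamma_D\setminus\{v\})\setminus\{e\}$, and the identification $\{x,y\}=\{[R]+v,[S]+v\}$; I fix notation so that $x=[R]+v$ and $y=[S]+v$. Writing $u_1,u_2$ for the endpoints of $e$ in $R$ and $S$ respectively, Lemma~\ref{lem:cutedge} also gives $x\cdot u_1=y\cdot u_2=1$ and $w\cdot x,w\cdot y\le 0$ for every vertex $w\notin\{v,u_1,u_2\}$. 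I would note that $e$ is the only edge of $\Gamma_D$ joining $R$ to $S$, and that $v$ has neighbours in both $R$ and $S$, as otherwise $e$ would be a cut-edge of $\Gamma_D$.

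For the uniqueness statement I would compute the remaining pairings directly from the definition of the form. For $w\in R$ one finds $w\cdot x=e(w,S)$, which vanishes for every $w\ne u_1$ because $e$ is the unique $R$--$S$ edge, while $u_1\cdot x=1$; for $w\in S$ one has $w\cdot x=-e(w,R)-e(w,v)\le 0$. Hence $u_1$ is the unique vertex \emph{other than $v$} with $u_1\cdot x>0$, and symmetrically $u_2$ is the unique such vertex for $y$. The restriction $w\ne v$ is genuinely needed, since $v\cdot x=e(v,S)>0$.

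It remains to produce the flype. Here I would pass from the combinatorics to the planar alternating diagram, in which the vertices of $\Gamma_D$ are the white regions and the edges are the crossings. The data above say that the crossing $e$ joins the white regions $u_1$ and $u_2$, that deleting this crossing together with the region $v$ separates the $R$-side of the diagram from the $S$-side, and that $v$ abuts both sides; by planarity this is exactly the flype configuration of Figure~\ref{fig:flype1}, with one side forming a $2$-tangle $T$ adjacent to the crossing $e$. Performing the flype that rotates $T$ past $v$ yields an alternating diagram $D'$ of the same link. I would then track the effect on the white regions: the flype splits the region $v$ into two regions meeting along a single crossing and merges the regions $u_1$ and $u_2$ into one, leaving all other regions untouched. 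Identifying the two regions born from $v$ with $x$ and $y$ (they share one crossing, matching $x\cdot y=-1$) and the merged region with $u_1+u_2$, and sending every unchanged vertex to itself, defines the required map $\Lambda_{D'}\to\Lambda_D$.

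Finally I would verify that this map is a lattice isomorphism by comparing inner products, using the explicit description of $\Gamma_{D'}$ furnished by the flype; the decisive comparisons are $x\cdot y=-1$, $x\cdot(u_1+u_2)=-e(u_2,v)$ and $y\cdot(u_1+u_2)=-e(u_1,v)$, together with the self-pairings, each of which must match the corresponding crossing count in $D'$. The main obstacle is precisely this diagrammatic step: one must argue rigorously that the purely combinatorial cut-edge structure forces the flype configuration in the plane, and then account for exactly how the white regions and the crossings between them are redistributed by the flype, so that the prescribed correspondence $v\mapsto\{x,y\}$, $\{u_1,u_2\}\mapsto u_1+u_2$ really does extend to an isomorphism of the entire white lattice.
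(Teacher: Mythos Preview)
The paper does not actually prove this lemma; it is quoted from \cite{mccoy2013alternating} and only the statement (together with Figure~\ref{fig:flype1}) appears here. So there is no in-paper proof to compare against.

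That said, your approach is the natural one and matches what the cited reference does. The reduction to Lemma~\ref{lem:cutedge} is exactly right: that lemma supplies the cut-edge $e$ in $\Gamma_D\setminus\{v\}$, the partition $R\sqcup S$, the identification $\{x,y\}=\{[R]+v,[S]+v\}$, and the uniqueness of $u_1,u_2$. Your direct pairing computations ($w\cdot x=e(w,S)$ for $w\in R$, etc.) are correct and give an independent verification of the uniqueness clause. Your observation that $v$ itself must be excluded because $v\cdot x=e(v,S)>0$ is also correct.

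You are right that the substantive work lies in the planar step: showing that the combinatorial data (a cut-edge $e$ in $\Gamma_D\setminus\{v\}$ with $v$ adjacent to both components) forces the flype configuration in the diagram. The point is that $\Gamma_D$ is embedded in $S^2$, and the two white regions $u_1,u_2$ together with $v$ are the only regions meeting both the $R$-side and the $S$-side; a Jordan-curve argument through $v$ and the crossing $e$ then exhibits the Conway sphere needed for the flype. After that, tracking the regions through the flype as you describe (the region $v$ splits in two along the moved crossing, while $u_1$ and $u_2$ merge) and checking the handful of pairings you list is a routine bookkeeping exercise. Your outline is correct; the only thing missing is to make that planarity argument explicit rather than appealing to the figure.
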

  \begin{figure}[h]
  \centering
  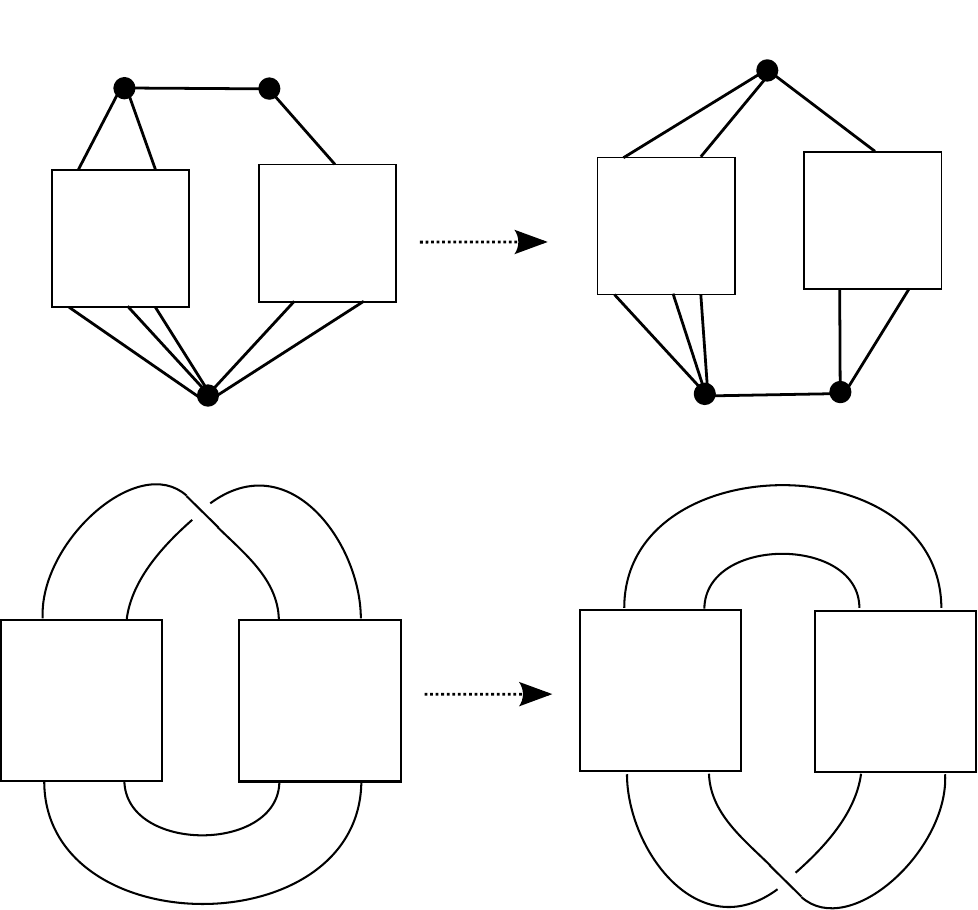
 \caption{The flype and new embeddings given by Lemma \ref{lem:flype1}, where $\overline{G}_1$, is the reflection of $G_1$ about a vertical line and $\overline{T}_1$ is the corresponding tangle (which can also be obtained by rotating $T_1$ by an angle of $\pi$ about a vertical line).}
 \label{fig:flype1}
\end{figure}
The other type of flype we will wish to consider is of the type shown in Figure~\ref{fig:flype2}.
Suppose that we have an alternating diagram $D$ with white regions $v$ and $w$ which form a cut set in $\Gamma_D$ and have a crossing between them. If $G_1$ is a component of $\Gamma_D \setminus \{v,w\}$, which is adjacent to an edge between $v$ and $w$ in the plane, then Figure~\ref{fig:flype2} shows that there is a flype in $D$ which rotates the tangle corresponding to $G_1$ by $\pi$. If $D'$ is the diagram resulting from this flype, then the lattice $\Lambda_{D'}$ is isomorphic to $\Lambda_D$. Since
$$(v+w)\cdot z = - z.[G_1],$$
for all $z \in G_1$, an isomorphism from $\Lambda_D$ to $\Lambda_{D'}$ can be given by replacing each vertex $z \in G_1$ by $-z$ and by replacing $v$ and $w$ by $v+[G_1]$ and $w+[G_1]$ respectively.

\begin{figure}[h]
  \centering
  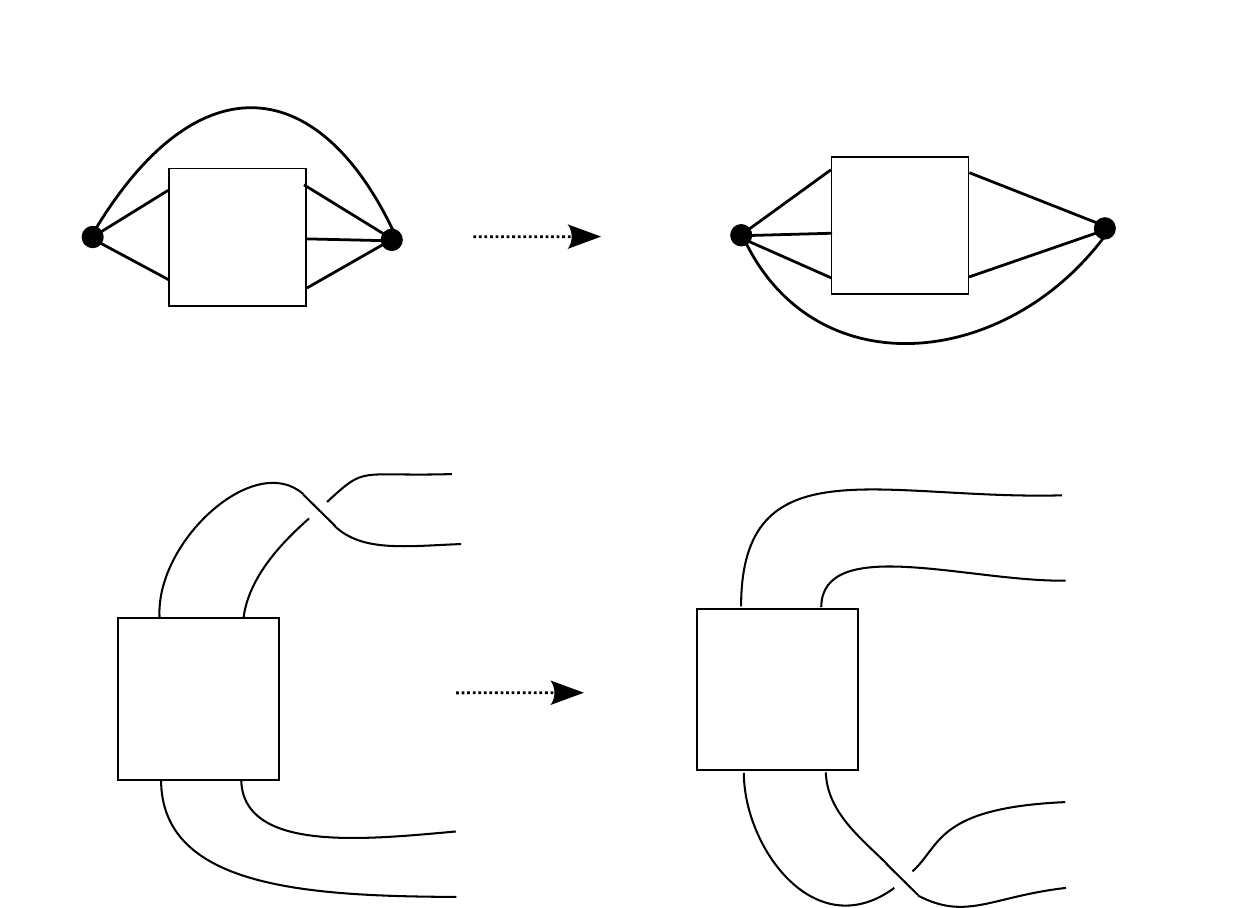
 \caption{Flypes arising when $v$ and $w$ form a cutset with an edge between them. Here $\overline{G}_1$ is the reflection of $G_1$ about a vertical line and $\overline{T}_1$ is the corresponding tangle (which can also be obtained by rotating $T_1$ by an angle of $\pi$ about a vertical line).}
 \label{fig:flype2}
\end{figure}

\section{Rational tangles}\label{sec:tangles}
We will give a summary of the necessary background material on rational tangles and their slopes. A more comprehensive account can be found in \cite{Gordon09dehnsurgery} or \cite{BurdeZieschang}.
A {\em tangle} in $B^3$ is the pair $(B^3,A)$ where $A$ is a properly embedded 1-manifold. We say $(B^3,A)$ is {\em marked}, if $\partial B^3 \cap A$ consists of 4 points and we have fixed an identification of the pairs $(\partial B^3, \partial B^3 \cap A)$ and $(S^2, \{NE,NW,SE,SW\})$. This marking on the boundary is illustrated in Figure~\ref{fig:handv}. Two marked tangles $(B^3,A)$ and $(B^3,A')$ are considered equal if there is an isotopy of $B$, fixing the boundary $\partial B$, which takes $A$ to $A'$.
\paragraph{}Consider the marked tangles $R(0/1)$ and $R(0/1)$ and the tangle operations $h$ and $v$, as shown in Figure~\ref{fig:handv}.

 \begin{figure}[h]
  \centering
  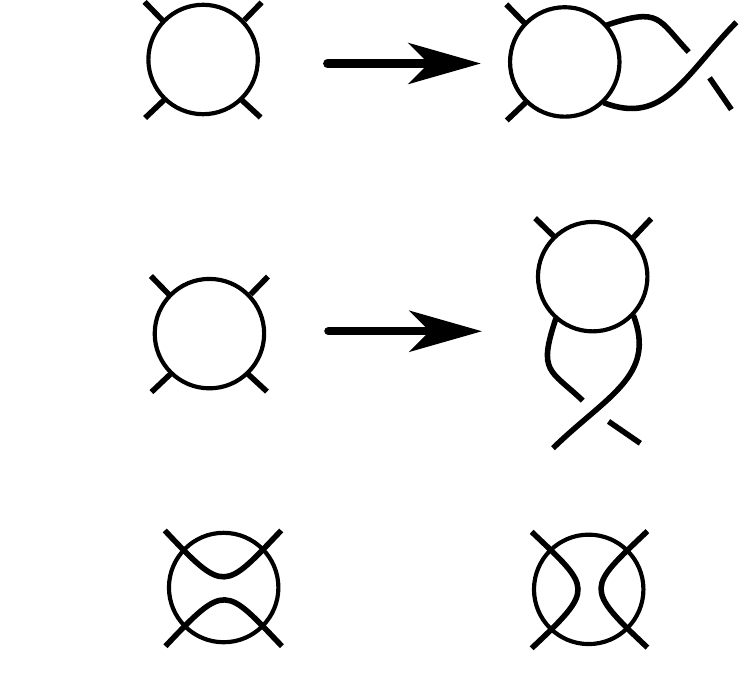
 \caption{The rational tangles $R(0/1)$, $R(1/0)$ and the tangle-building operations $h$ and $v$.}
 \label{fig:handv}
\end{figure}

For non-zero integers $a_1, \dotsb, a_k$, we get a rational number $p/q$ via the continued fraction
$$p/q = [a_1 , \dotsc, a_k]^+=
       a_1 + \cfrac{1}{a_2
           + \cfrac{1}{\ddots
           + \cfrac{1}{a_k} } }.
$$
This allows us to construct the {\em rational tangle} of slope $p/q$ defined by
\begin{equation}\label{eq:standardtangle}
R(p/q) =
    \begin{cases}
        h^{a_1}v^{a_2} \dotsb h^{a_{k-1}}v^{a_k}R(1/0)   & \text{if $k$ even}\\
        h^{a_1}v^{a_2} \dotsb v^{a_{k-1}}h^{a_k}R(0/1)   & \text{if $k$ odd.}
    \end{cases}
\end{equation}
As unmarked tangles, every rational tangle is homeomorphic to the trivial tangle, $R(1/0)$. Conversely, every tangle homeomorphic to $R(1/0)$ is equivalent as a marked tangle to a rational tangle. It is a theorem of Conway that the rational tangles are determined as marked tangles by the value $p/q$ \cite{conway69algebraic}.

\begin{figure}[h]
  \centering
  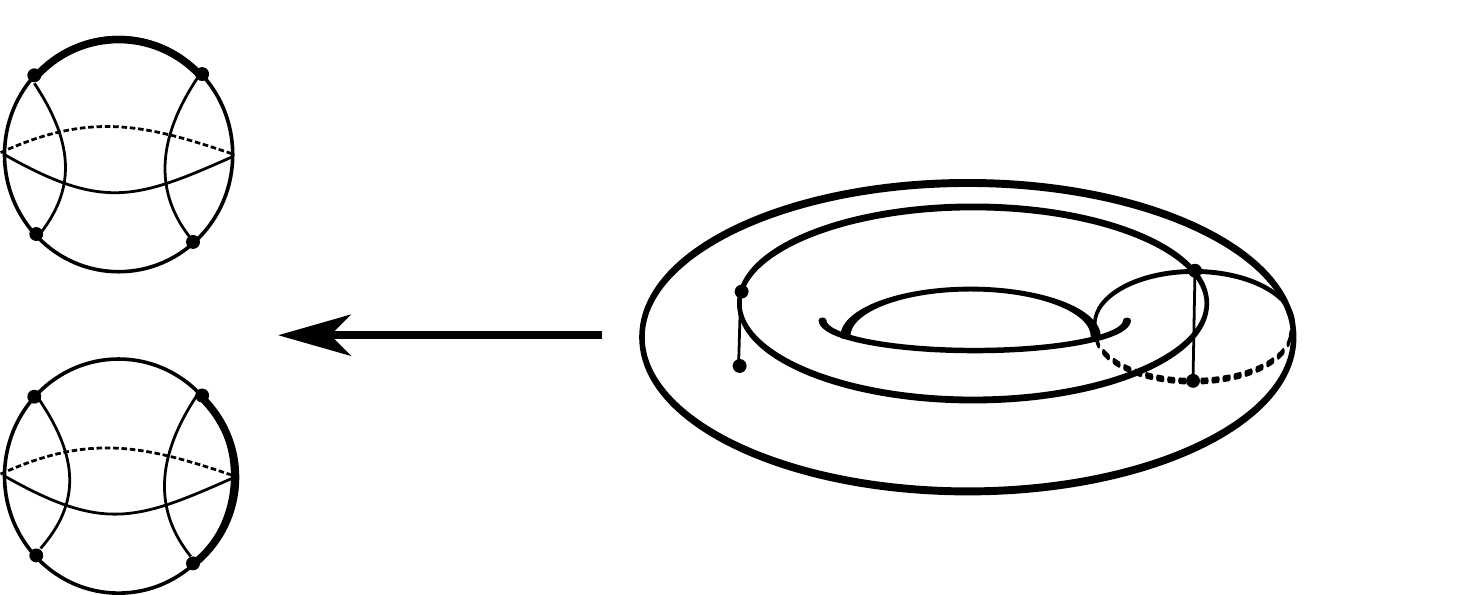
 \caption{Branched double cover over $R(1/0)$, the curves $\tilde{\mu}$ and $\tilde{\lambda}$ are the lifts of $\mu$ and $\lambda$ respectively.}
 \label{fig:coveringmap}
\end{figure}

\paragraph{} Since a rational tangle $R(p/q)$ is homeomorphic to the tangle $R(1/0)$, its branched double cover is a solid torus. If we consider branched double cover of $R(1/0)$ given in Figure~\ref{fig:coveringmap}. Orienting the curves $\tilde{\mu}$ and $\tilde{\lambda}$ so that $\tilde{\mu}.\tilde{\lambda}=-1$, then classes $[\tilde{\mu}],[\tilde{\lambda}]$ form a basis for $H_1(T^2)$ and with respect to this basis, the class which represents a meridian in the branched double cover of $R(p/q)$ is $p[\tilde{\mu}]+q[\tilde{\lambda}]$. This could be used to give an alternative definition of the slope of a rational tangle.

\subsection{Rational tangles in alternating diagrams}
This identification of rational tangles with the rational numbers depends on the marking on the boundary of the tangle. When a rational tangle occurs as a sub-tangle of an alternating diagram, we can use this to determine the slope. If the rational tangle $T$ is contained in an alternating diagram $D$ then, as in Section~\ref{sec:altdiagrams}, we can colour $D$ so that every crossing has incidence number -1. We then choose a marking on the boundary so that the arc $\lambda$ lies in a shaded region and $\mu$ does not. This allows four choices of markings on the sphere. However, in each case the lifts of $\mu$ and $\lambda$ give the same basis for $H_1(T^2)$, so the slope of $T$ is independent of this choice.
 \begin{figure}[h]
  \centering
  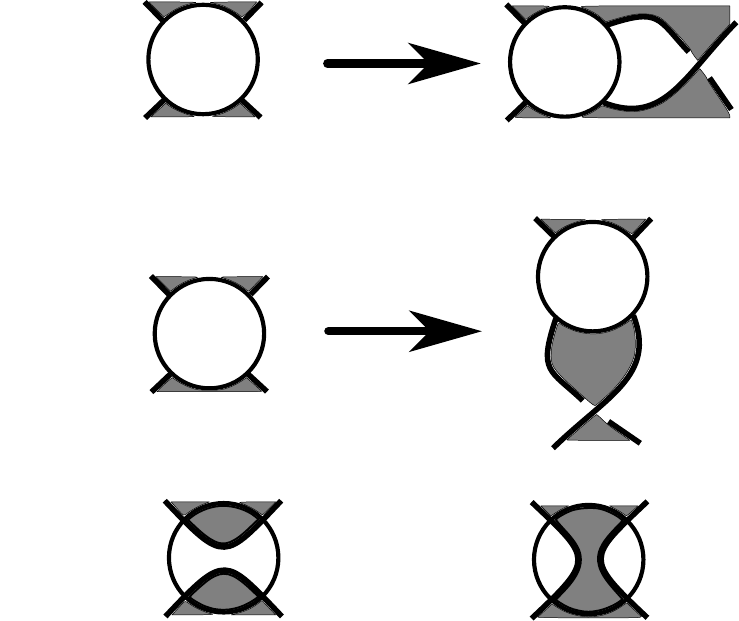
 \caption{How the shading of a rational tangle in an alternating diagram behaves on the tangles $R(0/1)$, $R(1/0)$ and how it is altered by the tangle-building operations $h$ and $v$.}
 \label{fig:shadedhandv}
\end{figure}
We will use the following proposition to find rational tangles in an alternating diagrams and calculate their slopes.

\begin{prop}\label{prop:tangledetection}
Let $D$ be a reduced alternating diagram. Suppose that there is a disk in the plane whose boundary intersects two white regions of $D$.  Let $T$ be the tangle contained in the disk and let $\Gamma_T$ be the subgraph of $\Gamma_D$ induced by $T$. Suppose that $\Gamma_T$ consists of vertices $v_0, \dotsc, v_{l+1}$ for $l\geq 0$, where $v_0$ and $v_{l+1}$ are the white regions which intersect the disk boundary. If there is precisely one edge between $v_i$ and $v_{i+1}$ for $0\leq i< l$ and every remaining edge in $\Gamma_T$ is incident to $v_{l+1}$, then $T$ is a rational tangle. Moreover, if $b_i$ denotes the number of edges incident to $v_i$ in $\Gamma_T$ then the slope of $T$ is $\frac{p}{q}$ where
\begin{equation}\label{eq:tangleslope}
\frac{q}{p} = [b_0, \dotsc, b_l]^-.
\end{equation}
Here $[b_0, \dotsc, b_l]^-$, denotes the continued fraction
$$[b_0, \dotsc, b_l]^-
    = b_0 - \cfrac{1}{b_1
           - \cfrac{1}{\ddots
           - \cfrac{1}{b_l} } }.
$$
\end{prop}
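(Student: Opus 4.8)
The plan is to extract the combinatorial structure of $\Gamma_T$, reconstruct $T$ as a standard rational tangle, and then read off the slope from the branched double cover. The first observation is that the hypotheses pin down the Goeritz data almost completely: since the only edges joining two vertices among $v_0,\dots,v_l$ are the single path-edges $v_i v_{i+1}$, while every other edge of $\Gamma_T$ meets the distinguished vertex $v_{l+1}$, deleting $v_{l+1}$ leaves a tridiagonal matrix with diagonal $(b_0,\dots,b_l)$ and off-diagonal entries $-1$. The edges to $v_{l+1}$ contribute only to the degrees, i.e. to the diagonal entries $b_i$. Thus all of the algebra is governed by this tridiagonal matrix, whose nested minors realise the continued fraction $[b_0,\dots,b_l]^-$.

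I would prove the two assertions --- that $T$ is rational and that its slope is $p/q$ with $q/p=[b_0,\dots,b_l]^-$ --- by induction on $l$. For the base case $l=0$ the graph $\Gamma_T$ is just $b_0$ parallel edges between $v_0$ and $v_{1}$, which is the white graph of a single twist region; such a tangle is manifestly rational, and comparing with the edgeless case $b_0=0$ (which is $R(1/0)$) identifies its slope as $1/b_0$, in agreement with $q/p=[b_0]^-=b_0$. For the inductive step I would reconstruct the diagram of $T$ from the planar embedding of $\Gamma_T$: the path $v_0 v_1 \cdots v_l$ together with the multi-edges to $v_{l+1}$ is precisely the planar white graph of a rational tangle in standard form, built by alternately inserting horizontal and vertical twist regions whose sizes are recorded by the degrees $b_i$. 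Recognising this standard form shows that $T$ is homeomorphic to the trivial tangle, hence rational.

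To compute the slope I would work in the branched double cover, using the meridian description recalled in Section~\ref{sec:tangles} and Figure~\ref{fig:coveringmap}: the branched double cover of $T$ is a solid torus $V$, and building $T$ up from $R(1/0)$ by the twist operations $h$ and $v$ lifts to successive Dehn twists of $\partial V=T^2$ along $\tilde\mu$ and $\tilde\lambda$. Each such Dehn twist acts on $H_1(T^2)$, with basis $[\tilde\mu],[\tilde\lambda]$, by an elementary matrix, so the meridian $[\tilde\mu]$ of the solid torus over $R(1/0)$ is carried to a class $p[\tilde\mu]+q[\tilde\lambda]$ whose coordinate ratio is the corresponding product of elementary matrices applied to the initial vector; this product unwinds as the continued fraction $q/p=[b_0,\dots,b_l]^-$, matching the tridiagonal minor computation of the first step. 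By the definition of slope, $T$ therefore has slope $p/q$.

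The main obstacle is bookkeeping the dictionary between the combinatorics of $\Gamma_T$ and the geometry of $T$ with entirely consistent conventions. I must ensure that (i) a run of parallel edges to $v_{l+1}$ versus a step along the path $v_i v_{i+1}$ translate into the correct choice of $h$ versus $v$, and with the correct sign, so that the final continued fraction is $q/p=[b_0,\dots,b_l]^-$ rather than its inverse or negative; and (ii) the marking and shading fixed in Section~\ref{sec:tangles} (so that $\lambda$ lies in a shaded region and $\mu$ does not) are respected throughout, which is what makes the slope intrinsic to $T$ and independent of the ambient diagram $D$. The hypotheses that $D$ be reduced and alternating are precisely what rule out nugatory crossings and guarantee that the induced subgraph really is a chain-plus-apex, with no extra edges among $v_0,\dots,v_l$; checking that this structure persists through the inductive reduction is the crux of the argument.
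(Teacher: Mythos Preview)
Your approach is valid but genuinely different from the paper's. The paper proves both assertions in a single induction on the \emph{number of crossings}, not on $l$: at each stage it examines the boundary vertex $v_0$, and either (if $b_0>1$) strips off one edge between $v_0$ and $v_{l+1}$, recognising this as the inverse of the operation $v$, or (if $b_0=1$) strips off the single edge $v_0v_1$ and deletes $v_0$, recognising this as the inverse of $h$. The slope is then tracked recursively using the identities $vR(p'/q')=R(p'/(p'+q'))$ and $hR(p'/q')=R((p'+q')/q')$, and a one-line continued fraction manipulation in each case verifies \eqref{eq:tangleslope}. This is more elementary than your plan: it never invokes the branched double cover or Dehn twists, only the recursive definition \eqref{eq:standardtangle} of $R(p/q)$, and the bookkeeping you are rightly worried about is reduced to two short calculations.

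Your induction on $l$ can be made to work, but each step then bundles $b_0$ of the paper's steps into one (namely $T=v^{b_0-1}h\,T'$), and the continued fraction identity you must verify becomes $1+[b_1-1,b_2,\dots,b_l]^-=[b_1,b_2,\dots,b_l]^-$, which is still easy but slightly less transparent. Your alternative route for the slope, via elementary matrices in $H_1(T^2)$, is a correct and well-known computation; it buys you a conceptual explanation of why a continued fraction appears at all, at the cost of having to check the sign and orientation conventions of Section~\ref{sec:tangles} carefully. The paper's proof sidesteps that entirely by staying on the diagrammatic side.
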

\begin{proof}
We proceed by induction on the number of crossings. If $T$ does not contain any crossings, then it is equivalent as a marked tangle to $R(1/0)$. It is not $R(0/1)$ as we are presuming $\Gamma_T$ has at least two white regions. In this case $\Gamma_T$ consists of two vertices and no edges between them. Therefore $b_0=0$ and \eqref{eq:tangleslope} is clearly satisfied. The conditions in the proposition are such that we can see that $T$ is obtained from a smaller tangle $T'$ by applying one of the operations $h$ or $v$.
\paragraph{} If $b_0>1$, then there is a crossing between $v_0$ and $v_{l+1}$. So we see that $T$ is obtained by applying the operation $v$ to a tangle $T'$ for which the white graph $\Gamma_{T'}$ is obtained by deleting an edge between $v_0$ and $v_{l+1}$. By the inductive hypothesis, we can assume that $T'$ is a rational tangle of slope $\frac{p'}{q'}$, where
$$\frac{q'}{p'} = [b_0-1, \dotsc, b_l]^-.$$
From \eqref{eq:standardtangle}, it follows that $vR(\frac{p'}{q'})=R(\frac{p'}{p'+q'})$. Therefore $T$ is rational of slope $\frac{p}{q}=\frac{p'}{p'+q'}$. Therefore
$$[b_0, \dotsc, b_l]^-=1+\frac{q'}{p'}=\frac{p'+q'}{p'}=\frac{q}{p},$$
as required.
\paragraph{} If $b_0=1$, then there is no crossing between $v_0$ and $v_{l+1}$ and the sole crossing incident to $v_0$ is between $v_0$ and $v_1$. Thus we see that $T$ is obtained by applying the operation $h$ to a tangle $T'$ for which the white graph $\Gamma_{T'}$ is obtained by deleting $v_0$. By the inductive hypothesis, we can assume that $T'$ is a rational tangle of slope $p'/q'$, where
$$\frac{q'}{p'} = [b_1-1, \dotsc, b_l]^-.$$
From \eqref{eq:standardtangle} it follows that $hR(\frac{p'}{q'})=R(\frac{p'+q'}{q'})$ and hence that $T$ has slope $\frac{p}{q}=\frac{p'+q'}{q'}$. Therefore
$$[b_0, \dotsc, b_l]^-=1-\frac{1}{\frac{q'}{p'}+1}=\frac{q'}{p'+q'}=\frac{q}{p},$$
as required.
\end{proof}

\subsection{Tangle replacement and surgery}
Now we suppose that we have a knot or link $K$ with a diagram $D$ obtained by replacing a $1/0$-tangle in a diagram of the unknot $D'$ by a rational tangle of slope $p/q$. The double cover of $S^3$ branched along $D'$ is again $S^3$ and the $1/0$-tangle lifts to give a solid torus $T \subset S^3$. Let $\kappa$ be the knot given by the core of $T$. Let $\lambda_0$ be a null-homologous longitude of $\kappa$ lying in $\partial T$ and $\mu_0\in \mathbb{Z}$ be such that
$$[\tilde\lambda]=\mu_0 [\tilde\mu] + [\lambda_0].$$
If we consider the branched double cover of $D$, we see that it is obtained by cutting out the interior of $T$ and gluing in a solid torus in such a way that $p[\tilde{\mu}]+q[\tilde{\lambda}]$, bounds a disk in the torus. Therefore, we see that $\Sigma(K)$ is obtained by surgery on $\kappa$. In particular,
\begin{equation}\label{eq:surgerycoef}
\Sigma(K)=S^3_{-(\mu_0 +\frac{p}{q})}(\kappa).
\end{equation}
\begin{rem}
There are different conventions for labeling the slope of a rational tangle and the slope of Dehn surgery (c.f the remark after Corollary 4.4 in \cite{Gordon09dehnsurgery}). We have chosen to orient $\tilde{\mu}$ and $\tilde{\lambda}$, so that $\tilde{\mu}.\tilde{\lambda}=-1$. In order to match the usual conventions for Dehn surgery, we would need to reverse the orientation on $\tilde{\lambda}$.
This explains the minus sign appearing in the surgery coefficient of \eqref{eq:surgerycoef}.
\end{rem}

\paragraph{} This construction giving a surgery from a tangle replacement is frequently referred to as the Montesinos trick. In order to determine the sign of the integer $\mu_0$, we will quote a special case for which it is known. Together with the observations of the previous paragraph this will allow us to prove the implication $(iii)\Rightarrow (i)$ in Theorem~\ref{thm:rationalsurgery}.

\begin{prop}[Proof of Theorem~8.1,\cite{ozsvath2005knots}]\label{prop:Montesinos}
Suppose that a knot $K\subset S^3$ can be unknotted by changing a negative crossing to a positive one. Then there is a knot $\kappa \subset S^3$ such that $\Sigma(K)=S_{-\delta/2}^3(\kappa)$ where $\delta=(-1)^{\sigma(K)/2}\det(K)$.
\end{prop}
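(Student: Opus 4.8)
The plan is to realise the unknotting crossing change as a rational tangle replacement and to run the Montesinos trick exactly as in the discussion preceding \eqref{eq:surgerycoef}. I would choose a ball $B$ meeting $K$ in the two arcs at the unknotting crossing, so that $K\cap B$ is a single-crossing tangle and replacing it by the opposite crossing yields a diagram of the unknot $U$. Passing to branched double covers, $B$ lifts to a solid torus while its complementary ball lifts to a fixed manifold with torus boundary; the two crossings fill this boundary along two different slopes. Since $\Sigma(U)=S^3$, the core $\kappa$ of the solid torus coming from the $U$-crossing is a genuine knot in $S^3$, and $\Sigma(K)$ is recovered by removing this solid torus and regluing, that is, by Dehn surgery on $\kappa$. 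This yields $\Sigma(K)=S^3_{\alpha}(\kappa)$ for some slope $\alpha$, and the whole content of the proposition is the computation of $\alpha$.

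To see that $\alpha$ is a half-integer, I would work in the basis $[\tilde\mu],[\tilde\lambda]$ of the first homology of the lifted boundary torus, normalised so that $[\tilde\mu]\cdot[\tilde\lambda]=-1$ as in Figure~\ref{fig:coveringmap}. A single positive and a single negative crossing are the tangles $R(1)$ and $R(-1)$, which fill along the slopes $[\tilde\mu]+[\tilde\lambda]$ and $-[\tilde\mu]+[\tilde\lambda]$; these differ by $2[\tilde\mu]$. By definition of $\kappa$, the $U$-slope is the meridian $m$ of $\kappa$, so $m$ is one of these two classes and the $K$-slope equals $m-2[\tilde\mu]$. Since $[\tilde\mu]$ pairs to $\pm1$ with $m$, its longitude-coefficient in the meridian–longitude basis of $\kappa$ is a unit, so expressing $m-2[\tilde\mu]$ in that basis produces a longitude-coefficient of $\pm2$ and an odd meridian-coefficient. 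Hence $\alpha=a/2$ with $a$ odd. The order of homology then forces $|a|=|H_1(\Sigma(K))|=\det(K)$, so that $\alpha=\pm\det(K)/2$.

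The main obstacle is pinning down the sign, namely showing $a=-\delta$ with $\delta=(-1)^{\sigma(K)/2}\det(K)$; this requires tracking orientations rather than merely the order of homology. The two half-integer fillings $S^3_{\pm\det(K)/2}(\kappa)$ carry opposite linking forms on $H_1\cong\mathbb{Z}/\det(K)$, so the correct sign is detected by the linking form of $\Sigma(K)$ itself. I would compute that linking form from a Goeritz matrix of a diagram of $K$ and compare it against the Gordon--Litherland formula relating the signature of the Goeritz form to $\sigma(K)$; the hypothesis that the change runs from a negative to a positive crossing is what selects one of the two slopes and hence fixes the sign of $a$. Equivalently, following Ozsv\'ath and Szab\'o, one compares a correction term of $S^3_{\pm\det(K)/2}(\kappa)$ with that of $\Sigma(K)$, the sign of $\delta$ being precisely the datum that makes the two orientations agree. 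This orientation and sign bookkeeping, rather than the mere existence of the half-integer surgery, is the delicate part of the argument.
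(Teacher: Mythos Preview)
The paper does not give its own proof of this proposition; it is quoted as a known result from the proof of Theorem~8.1 in \cite{ozsvath2005knots}. The surrounding text says explicitly that ``in order to determine the sign of the integer $\mu_0$, we will quote a special case for which it is known,'' and the result is then used as a black box in the proof of Proposition~\ref{prop:3implies1}. So there is nothing in the paper to compare your argument against.

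That said, your outline is the standard Montesinos-trick argument and is correct. The identification of the crossing change with a tangle replacement, the lift to a solid torus in the branched cover, and the conclusion that the resulting surgery coefficient is a half-integer with numerator $\pm\det(K)$ are all exactly as you describe, and indeed mirror the discussion leading up to \eqref{eq:surgerycoef} in the paper. You are also right that the sign is the only subtle point and that it is fixed by an orientation-sensitive invariant of $\Sigma(K)$; computing the linking form via a Goeritz matrix and invoking the Gordon--Litherland signature formula is one clean way to do this, and is essentially what Ozsv\'ath and Szab\'o carry out in the cited reference. Your acknowledgement that this bookkeeping is the genuine content is accurate; the rest of the proposal is a faithful reconstruction of the cited argument.
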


\begin{prop}\label{prop:3implies1}
Let $K$ be a link with an alternating diagram $D$ containing a rational tangle of slope $\frac{r}{s}\geq 0$. Let $D'$ be the alternating diagram obtained by replacing this tangle with a single crossing $c$. If $c$ is an unknotting crossing in $D'$, which is positive if $\sigma(D')=-2$ and negative if $\sigma(D')=0$, then there is $\kappa \subset S^3$ such that $\Sigma(K)=S_{-p/q}^3(\kappa)$, where $q=r+s$ and $\det K=p=qn-r>0$ for some integer $n$.
\end{prop}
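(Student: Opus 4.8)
The plan is to combine the surgery description coming from the Montesinos trick, recorded in equation~\eqref{eq:surgerycoef}, with the special case of Proposition~\ref{prop:Montesinos} that fixes the surgery slope when $K$ is unknotted by a single crossing change. The basic strategy is to produce $\kappa$ as the core of the solid torus in $\Sigma(D')$ coming from the $1/0$-tangle that replaces the crossing $c$, and then to pin down the integer $\mu_0$ in \eqref{eq:surgerycoef} so that the final surgery coefficient comes out to $-p/q$ with the stated $q=r+s$ and $p=qn-r$.

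First I would apply the Montesinos construction of Section~\ref{sec:tangles} to the pair $(D,D')$. Viewing $D$ as obtained from $D'$ by replacing the single crossing $c$ (a $1/0$-tangle, up to the relevant normalisation) with the rational tangle of slope $\tfrac{r}{s}$, the diagram $D'$ itself is obtained from the unknot by a crossing change at $c$, so its branched double cover is $S^3$ and $c$ lifts to give a solid torus $T$ whose core is a knot $\kappa\subset S^3$. Equation~\eqref{eq:surgerycoef} then shows that $\Sigma(K)=S^3_{-(\mu_0+p/q)}(\kappa)$ for the appropriate rational-tangle data, where $\mu_0\in\mathbb{Z}$ records the framing discrepancy $[\tilde\lambda]=\mu_0[\tilde\mu]+[\lambda_0]$. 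Because the slope of the replacing tangle is $\tfrac{r}{s}\geq 0$, Proposition~\ref{prop:tangledetection} (and the tangle calculus of \eqref{eq:standardtangle}) lets me read off how the meridian $p[\tilde\mu]+q[\tilde\lambda]$ sits in $H_1(\partial T)$, giving $q=r+s$ directly.

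Next I would determine $\mu_0$ using the hypothesis on the sign of $c$. The crossing $c$ is an unknotting crossing for $D'$, positive when $\sigma(D')=-2$ and negative when $\sigma(D')=0$; after passing to the mirror if necessary I can arrange that $K$ (here the unknot underlying $D'$) is unknotted by changing a negative crossing to a positive one, so Proposition~\ref{prop:Montesinos} applies and computes the \emph{half-integer} surgery slope $-\delta/2$ with $\delta=(-1)^{\sigma(D')/2}\det(D')$ that produces $\Sigma(D')=S^3$. Matching this half-integer slope with the $q=2$ specialisation of \eqref{eq:surgerycoef} fixes $\mu_0$ in terms of $\sigma(D')$ and $\det(D')$; since $\mu_0$ depends only on the tangle $T$ and its framing, not on the slope of what is glued into it, the same $\mu_0$ governs the surgery yielding $\Sigma(K)$. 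Substituting this value into $\Sigma(K)=S^3_{-(\mu_0+p/q)}(\kappa)$ and using $p/q=n-r/q$ then yields the claimed coefficient $-p/q$ with $p=qn-r$. Positivity of $p=\det K$ follows from the standard fact that the determinant of a non-split alternating link equals the order of $H_1(\Sigma(K))$, which is $|p|$.

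The main obstacle I expect is the bookkeeping around $\mu_0$ and the sign conventions: one must verify that the longitudinal framing $[\lambda_0]$ of $\kappa$ — equivalently the integer $\mu_0$ — is genuinely intrinsic to the tangle $T$ and independent of whether one glues in the crossing (to recover $D'$ and $S^3$) or the full rational tangle (to recover $K$). This is what allows the half-integer computation of Proposition~\ref{prop:Montesinos} to be transported to the general-$q$ setting, and it is exactly the point where the orientation choice $\tilde\mu\cdot\tilde\lambda=-1$ and the resulting sign in \eqref{eq:surgerycoef} must be tracked carefully so that the final slope is $-p/q$ rather than $+p/q$. Once this invariance of $\mu_0$ is established, the computation of $q=r+s$ and $p=qn-r$ is a routine application of the continued-fraction identities for $h$ and $v$ in \eqref{eq:standardtangle}.
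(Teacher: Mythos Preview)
Your approach is essentially the same as the paper's: both use the Montesinos trick via \eqref{eq:surgerycoef} together with Proposition~\ref{prop:Montesinos} to pin down the integer $\mu_0$ and hence the surgery coefficient. Two points are worth sharpening. First, the paper is more explicit about the step you call ``normalisation'': rather than treating the crossing $c$ as a $1/0$-tangle, the paper observes that $D$ is obtained from the \emph{unknot} diagram (not from $D'$) by replacing a $-1$-tangle with the $\tfrac{r}{s}$-tangle, and then uses a direct tangle identity (replacing $R(-1)$ by $R(\tfrac{r}{s})$ is the same as replacing $R(1/0)$ by $R(\tfrac{r}{r+s})$) to put things in the form required by \eqref{eq:surgerycoef}. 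This is exactly where $q=r+s$ comes from, and it avoids appealing to Proposition~\ref{prop:tangledetection}, which is about tangles in alternating diagrams and does not directly give this renormalisation. Second, there is a small slip in your setup: it is the branched double cover of the unknot diagram (obtained from $D'$ by changing $c$), not of $D'$ itself, that is $S^3$ and contains $\kappa$. Both $\Sigma(D')=S^3_{-(\mu_0+1/2)}(\kappa)$ and $\Sigma(K)=S^3_{-(\mu_0+r/q)}(\kappa)$ are then surgeries on this same $\kappa\subset S^3$, which is why the comparison with Proposition~\ref{prop:Montesinos} determines $\mu_0\geq 0$ (and hence $n=\mu_0+1\geq 1$, giving $p=qn-r>0$).
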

\begin{proof}
Since $c$ is an unknotting crossing, we see that $D$ is obtained by replacing a $-1$-tangle by a $\frac{r}{s}$-tangle in a diagram of the unknot. As shown in Figure~\ref{fig:crossingreplace}, this is equivalent to replacing a $1/0$-tangle by a $\frac{r}{r+s}$-tangle.
\begin{figure}
  \centering
  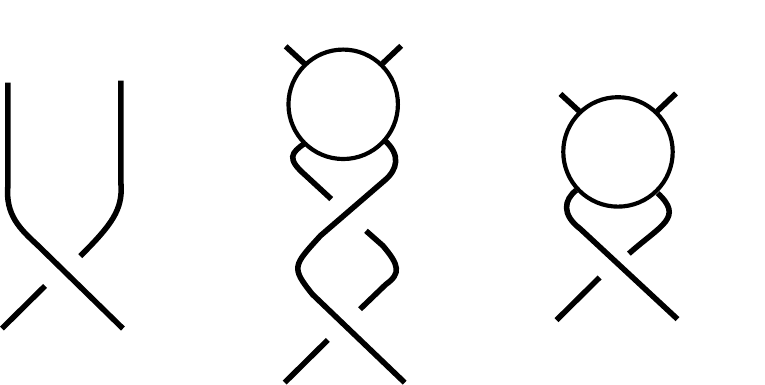
 \caption{Replacing the tangle with $R(-1/1)$ with $R(r/s)$ is equivalent to replacing the tangle $(a)$ with $(b)$. Since $(b)$ is isotopic to $(c)$, this shows that the tangle replacement is equivalent to replacing $R(1/0)$ with $R(\frac{r}{r+s})$.}
 \label{fig:crossingreplace}
\end{figure}
Applying \eqref{eq:surgerycoef} and the discussion preceding it, we see taking the branched double cover over the unknot obtained by changing $c$, gives $\kappa \subset S^3$ and integer $\mu_0 \in \mathbb{Z}$ such that
$$\Sigma(D')=S_{-(\mu_0 + \frac{1}{2})}^3(\kappa)\quad \text{and} \quad \Sigma(K)=S_{-(\mu_0 +\frac{r}{q})}^3(\kappa),$$
where $q=r+s$. The sign conditions on the unknotting crossing $c$ combined with Proposition~\ref{prop:Montesinos} imply that $\mu_0\geq 0$. Taking $n=\mu_0+1$ and $p=nq -r$, we see that
$$\Sigma(K)=S_{-p/q}^3(\kappa).$$
Observe that
$$|H_1(\Sigma(K))|=\det K = |H_1(S_{-p/q}^3(\kappa))|=p$$
to complete the proof.
\end{proof}
Observe that when we consider a tangle of slope $\frac{q-r}{r}$ in Proposition~\ref{prop:3implies1}, we get the implication $(iii) \Rightarrow (i)$ in Theorem~\ref{thm:rationalsurgery}.

\section{Changemaker lattices}\label{sec:cmlattices}
In this section, we define $p/q$-changemaker lattices and explore their properties. Changemaker lattices corresponding to the case $q=1$ were defined by Greene in his solution to the Lens space realization problem \cite{GreeneLRP} and work on the cabling conjecture \cite{greene2010space}. The case $q=2$ arose in his work on unknotting numbers \cite{Greene3Braid}. The more general definition we state here is the one which arises in Gibbons' work \cite{gibbons2013deficiency}.

\begin{defn}We say $(\sigma_1, \dotsb, \sigma_t)$ satisfies the {\em changemaker condition}, if the following conditions hold,
$$0\leq \sigma_1 \leq 1, \text{ and } \sigma_{i-1} \leq \sigma_i \leq \sigma_1 + \dotsb + \sigma_{i-1} +1,\text{ for } 1<i\leq t.$$
\end{defn}

\begin{defn}\label{defn:CMlattice}
First suppose that $q=1$, so that $p/q>0$ is an integer. Let $f_0, \dotsc, f_t$ be an orthonormal basis for $\mathbb{Z}^t$. Let $w_0=\sigma_1 f_1 + \dotsb + \sigma_t f_t$ be a vector such that $\norm{w_0}=p$ and $(\sigma_1, \dotsb, \sigma_t)$ satisfies the changemaker condition, then
$$L=\langle w_0\rangle^\bot \subseteq \mathbb{Z}^{t+1}$$
is a {\em$p/q$-changemaker lattice}.

\paragraph{}Now suppose that $q\geq 2$ so that $p/q>0$ is not an integer. Write $p/q$ in the form $n-r/q$, where $0<r<q$. This has continued fraction expansion of the form, $p/q=[a_0,a_1, \dotsb , a_l]^{-}$, where $a_k\geq 2$ for $1\leq k \leq l$, $a_0=n\geq 1$, and $q/r=[a_1, \dotsb , a_l]^{-}$. Now define $m_0=0$ and $m_k=\sum_{i=1}^ka_i -k$ for $1\leq k \leq l$. Set $s=m_{l}$ and let $f_1, \dotsc, f_t, e_0, \dotsc, e_s$ be an orthonormal basis for the lattice $\mathbb{Z}^{t+s+1}$.
Let $w_0=e_0+\sigma_1 f_1 + \dotsb + \sigma_t f_t,$ be a vector such that $\{\sigma_1, \dotsb, \sigma_t\}$ satisfies the changemaker condition and $\norm{w_0}=n$. For $1\leq k \leq l$, define
$$w_k=-e_{m_{k-1}}+e_{m_{k-1}+1}+ \dotsb + e_{m_{k}}.$$
We say that $L=\langle w_0, \dotsc, w_l\rangle^\bot \subseteq \mathbb{Z}^{t+s+1}$ is a {\em$p/q$-changemaker lattice}.
\end{defn}

\paragraph{} We include the definition in the case where $q=1$ for completeness, however throughout this paper we will be concerned with the case where $q\geq 2$. Observe that by construction for a changemaker lattice $L=\langle w_0, \dotsc, w_l\rangle^\bot \subseteq \mathbb{Z}^{t+s+1}$, we have
\[
w_i.w_j =
  \begin{cases}
   a_j            & \text{if } i=j \\
   -1       & \text{if } |i-j|=1\\
   0        & \text{otherwise.}
  \end{cases}
\]

\begin{example}\label{example:3over5}
Consider $p/q=n-3/5$, the continued fraction of this is $p/q=[n,2,3]^-$, so a $p/q$-changemaker lattice takes the form
$$L=\langle \sigma, -e_0+e_1,-e_1+e_2+e_3 \rangle^\bot \subset \mathbb{Z}^{r+4}=\langle e_0, e_1, e_2, e_3, f_1, \dotsc, f_r \rangle,$$
where $\sigma=e_0 + \sigma_1 f_1 + \dotsb + \sigma_r f_r$ and $(\sigma_1, \dotsc, \sigma_r)$ satisfies the changemaker condition.
\end{example}
If $p/q< 1$, then $w_0=e_0$ and $x\cdot e_0=0$ for all $x\in L$. We will ignore this degenerate case and assume from now on that $p/q>1$.
The term changemaker is explained by the following combinatorial proposition.
\begin{prop}[Brown \cite{brown1961note}]\label{prop:CMcondition}
Let  $\sigma =\{\sigma_1, \dotsc , \sigma_s\}$, with $\sigma_1\leq \dotsb \leq \sigma_s$. There is $A\subseteq \{ 1, \dotsc , s\}$ such that $k=\sum_{i\in A} \sigma_i$, for every integer  $k$ with $0\leq k\leq \sigma_1 + \dotsb + \sigma_s$, if and only if $\sigma$ satisfies the changemaker condition.
\end{prop}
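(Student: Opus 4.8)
The plan is to prove both directions of the equivalence, treating the "changemaker condition implies representability" direction as the combinatorial heart and the converse as a shorter extremality argument.

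First I would prove the forward implication (changemaker condition $\Rightarrow$ every $k$ is representable) by induction on $s$, using a greedy algorithm. Assume $\sigma_1 \leq \dotsb \leq \sigma_s$ satisfies the changemaker condition, and let $k$ satisfy $0 \leq k \leq \sigma_1 + \dotsb + \sigma_s$. The key quantity to track is the partial sum $N_i = \sigma_1 + \dotsb + \sigma_i$. If $k \leq N_{s-1}$, then the inductive hypothesis applied to $\{\sigma_1, \dotsc, \sigma_{s-1}\}$ produces a representing subset and we are done without using $\sigma_s$. Otherwise $N_{s-1} < k \leq N_s$, and here the condition $\sigma_s \leq N_{s-1} + 1$ is exactly what I need: it guarantees $k - \sigma_s \geq k - N_{s-1} - 1 \geq 0$, and also $k - \sigma_s \leq N_s - \sigma_s = N_{s-1}$, so $k - \sigma_s$ lies in the range representable by $\{\sigma_1, \dotsc, \sigma_{s-1}\}$ by induction. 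Including $\sigma_s$ in the subset then represents $k$. The base case $s = 1$ follows from $0 \leq \sigma_1 \leq 1$, so $\sigma_1 \in \{0, 1\}$ and $k \in \{0\}$ or $k \in \{0, 1\}$ respectively.

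For the converse (representability $\Rightarrow$ changemaker condition), I would argue contrapositively by locating the first index where the condition fails. Suppose some $\sigma_i$ violates the bound, and take the least such $i$. If $i = 1$ and $\sigma_1 \geq 2$, then $k = 1$ is not representable since every $\sigma_j \geq \sigma_1 \geq 2$, so no nonempty subset sums to $1$. If $i > 1$ and $\sigma_i \geq N_{i-1} + 2$, then consider $k = N_{i-1} + 1$. This value is at most $\sum_j \sigma_j$, so by hypothesis it should be representable by some subset $A$. But any $\sigma_j$ with $j \geq i$ satisfies $\sigma_j \geq \sigma_i \geq N_{i-1} + 2 > k$, so $A$ can only use indices from $\{1, \dotsc, i-1\}$; yet the maximum such sum is exactly $N_{i-1} = k - 1 < k$, a contradiction. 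Thus representability forces the changemaker condition at every index. (The monotonicity hypothesis $\sigma_{i-1} \leq \sigma_i$ is part of the standing assumption, so only the upper bounds need checking.)

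The main obstacle I anticipate is purely bookkeeping rather than conceptual: ensuring the inequalities $0 \leq k - \sigma_s \leq N_{s-1}$ are tight enough to invoke the inductive hypothesis on the correct range, and handling the boundary value $k = N_{s-1} + 1$ in the converse where the gap argument is sharpest. Since this is a known result of Brown, I would keep the exposition brief, emphasizing that the upper bound $\sigma_i \leq N_{i-1} + 1$ is precisely the condition preventing a representability "gap" at $N_{i-1} + 1$, which makes both directions hinge on the same arithmetic inequality.
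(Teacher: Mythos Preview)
Your proof is correct. Note, however, that the paper does not actually prove this proposition: it is stated with a citation to Brown and used as a black box throughout Section~6. So there is no ``paper's own proof'' to compare against. What you have written is the standard elementary argument---greedy induction for the forward direction, and identifying the gap value $N_{i-1}+1$ for the converse---and it goes through cleanly as stated.
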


\subsection{Fractional parts}\label{sec:fracparts}
For the duration of this section
$$L=\langle w_0, \dotsc, w_l\rangle^\bot \subseteq \mathbb{Z}^{t+s+1}=\langle f_1, \dotsc , f_t, e_0, \dotsc e_s \rangle$$ will be a $p/q$-changemaker lattice, with $p>q\geq 2$ and $w_0=e_0+\sigma_1 f_1 + \dotsb + \sigma_t f_t$.
The {\em fractional part} $L_F$ is defined to be
$$L_F=\langle w_1,\dotsc, w_l \rangle^\bot \cap \langle e_0, \dotsc , e_s \rangle,$$
and the {\em integer part} is
$$L_I=\langle w_0 \rangle^\bot \cap \langle e_0, f_1, \dotsc , f_t \rangle.$$

For any $x\in L$, we will use the notation $x_0 = x\cdot e_0$. Observe that $L_F\cap L_I = \langle e_0 \rangle$. So given $x \in L$ there are $x_I\in L_I$ and $x_F \in L_F$, such that $x=x_I + x_F - x_0e_0$. It is a straightforward calculation that for any $x,y \in L$,
\begin{equation}\label{eq:splittingproduct}
x\cdot y = x_I \cdot y_I - x_0 y_0 + x_F \cdot y_F.
\end{equation}

\paragraph{} In order to study $L_F$, we will construct a basis for it in the following way.
Let $M=\{0, \dotsc, s \} \setminus \{m_0, \dotsc , m_{l}\}$, where $m_0, \dotsc , m_{l}$ are as in Definition~\ref{defn:CMlattice}.
For each $k \in M$ with $k<\max M$, let $k'$ be minimal in $M$ with $k'>k$. Using this define
$$v_k'=-e_k + e_{k+1} + \dotsb + e_{k'},$$
for each $k\in M$.
We define $v_0= e_0+ \dotsb + e_{\min M}$. Finally we reindex the $v_k'$ to get $v_0, \dotsb, v_{m}$, where $m=|M|-1$, satisfying
\[
v_i\cdot v_j =
  \begin{cases}
   \norm{v_i}            & \text{if } i=j \\
   -1       & \text{if } |i-j|=1\\
   0        & \text{if } |i-j|>1.
  \end{cases}
\]
These form a basis for $L_F$. For $1\leq i \leq m$, we also have $v_i \cdot w_0 =0$, so $v_i \in L$.

\paragraph{} In the case of Example~\ref{example:3over5}, the fractional part is
$$L_F= \langle -e_0+e_1,-e_1+e_2+e_3\rangle^\bot \subseteq \langle e_0, \dotsc, e_3\rangle.$$
And the basis for this as constructed in this section consists of $v_0=e_0+e_1+e_2$ and $v_1=-e_2+e_3$.

\subsection{Irreducibility}
Now we wish to study the irreducibility of certain vectors in $L$ and $L_F$. Recall that $z \in L$ is irreducible if it cannot be written in the form $z=x+y$ for non-zero $x$ and $y$ in $L$ with $x\cdot y\geq 0$.
\begin{lem}\label{lem:generalirred}
Suppose that $\sigma_i\geq 1$ for $1\leq i \leq t$ and that $z\in L$ takes the form
$$z=-f_k + \sum_{i \in A} f_i + \sum_{j\in B} e_j,$$
for subsets $A\subseteq \{1, \dotsc, k-1\}$ and $B \subseteq \{0, \dotsc, s\}$. Then $z$ is irreducible.
\end{lem}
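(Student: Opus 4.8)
The plan is to show that any expression $z=x+y$ with $x\cdot y\ge 0$ must be trivial, so suppose for contradiction that $x,y\in L\setminus\{0\}$ satisfy $z=x+y$ and $x\cdot y\ge 0$. The starting observation is that, in the orthonormal basis $f_1,\dots,f_t,e_0,\dots,e_s$, every coordinate of $z$ lies in $\{-1,0,1\}$: the coefficient of $f_k$ is $-1$, the coefficients of $f_i$ (for $i\in A$) and of $e_j$ (for $j\in B$) are $1$, and all remaining coefficients vanish. This rigidity of the coordinates of $z$ is what drives the whole argument.

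First I would argue coordinatewise. Fix a basis vector $c$ and set $a=x\cdot c$ and $b=y\cdot c$, so that $a+b=z\cdot c\in\{-1,0,1\}$. A one-line check over the integers shows that in each of the three cases the product $ab$ is non-positive (for instance, if $a+b=1$ then $ab=a(1-a)\le 0$, and if $a+b=0$ then $ab=-a^2\le 0$), with equality precisely when $\{a,b\}\subseteq\{0,z\cdot c\}$. Summing over all coordinates gives $x\cdot y\le 0$, so the hypothesis $x\cdot y\ge 0$ forces $x\cdot y=0$ and hence $ab=0$ in every coordinate. Consequently $x$ and $y$ are $\{0,\pm 1\}$-vectors whose supports partition that of $z$, each occupied coordinate inheriting the sign it has in $z$. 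After possibly interchanging $x$ and $y$, I may assume $x\cdot f_k=-1$; then $y$ is a \emph{non-negative} vector of the form $y=\sum_{i\in A'}f_i+\sum_{j\in B'}e_j$ with $A'\subseteq A$ and $B'\subseteq B$.

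It then remains to show $y=0$, which contradicts $y\ne 0$ and finishes the proof. Here the hypothesis $\sigma_i\ge 1$ enters: since $w_0=e_0+\sum_i\sigma_i f_i$ (Definition~\ref{defn:CMlattice}) and $y$ has non-negative coordinates, $y\cdot w_0=(y\cdot e_0)+\sum_{i\in A'}\sigma_i\ge 0$, and $y\in L$ forces this sum to vanish, so $A'=\emptyset$ and $0\notin B'$. Thus $y=\sum_{j\in B'}e_j$ is supported on $\{e_1,\dots,e_s\}$. To conclude I would run an induction over the blocks cut out by the markers $m_0=0<m_1<\dots<m_l=s$, using $w_k=-e_{m_{k-1}}+e_{m_{k-1}+1}+\dots+e_{m_k}$. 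The relation $y\cdot w_k=0$ says that the number of elements of $B'$ in the block $\{m_{k-1}+1,\dots,m_k\}$ equals $1$ if $m_{k-1}\in B'$ and $0$ otherwise. Since $m_0=0\notin B'$, the case $k=1$ gives no elements of $B'$ in $\{1,\dots,m_1\}$, and in particular $m_1\notin B'$; iterating through $k=2,\dots,l$ shows every block is empty, so $B'=\emptyset$ and $y=0$.

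The conceptual heart of the argument is the coordinatewise reduction of the second paragraph, which converts the inner-product hypothesis $x\cdot y\ge 0$ into the rigid statement that $x$ and $y$ split the signed support of $z$; once this is in hand, the changemaker condition $\sigma_i\ge 1$ and the tridiagonal block structure of the $w_k$ do the rest with essentially no freedom. The step I expect to require the most care is verifying that $y\in L$ genuinely forces $y=0$: one must check that the non-negativity of $y$ (which comes from the sign rigidity, together with the normalisation $x\cdot f_k=-1$) interacts correctly with both $w_0$ and the $w_k$, and in particular that the induction over blocks cannot stall. I do not anticipate needing the norm of $z$ or the explicit basis $v_0,\dots,v_m$ of $L_F$ constructed in Section~\ref{sec:fracparts}, although the latter gives an alternative route to the final vanishing step.
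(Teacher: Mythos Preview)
Your proof is correct and follows essentially the same approach as the paper: a coordinatewise argument shows every summand of $x\cdot y$ is non-positive, forcing $x$ and $y$ to partition the signed support of $z$, after which the non-negative piece must vanish since it lies in $L$. The only difference is that where the paper simply asserts that the only vector $w\in L$ with all coordinates non-negative is $w=0$, you spell this out explicitly via $w_0$ and an induction over the $w_k$; this is exactly the content behind the paper's one-line claim.
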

\begin{proof}
Suppose we have $z$ as in the statement of the lemma, and that we may write $z=x+y$ with $x\cdot y \geq 0$. This gives
\begin{equation}\label{eq:irredsum}
x \cdot y = \sum_{i=1}^t (x\cdot f_i) (y\cdot f_i) + \sum_{j=0}^s (x\cdot e_j) (y\cdot e_j)\geq 0.
\end{equation}
For all $i$ and $j$, we have $x\cdot f_i+y\cdot f_i = z\cdot f_i \in \{-1,0,1\}$ and $x\cdot e_j+y\cdot e_j = z\cdot e_j \in \{-1,0,1\}$. This implies that $(x\cdot f_i) (y\cdot f_i)\leq 0$ and $(x\cdot e_j) (y\cdot e_j)\leq 0$ for all $i$ and $j$. In particular, this shows that every summand in \eqref{eq:irredsum} must be 0.

It follows that $(x\cdot f_k,y\cdot f_k)\in \{((-1, 0), (0, -1)\}$ and $(x\cdot f_i,y\cdot f_i),(x\cdot e_j,y\cdot e_j)\in \{(0,0),(1, 0), (0,1)\}$ for all $i\ne k$ and all $j$. We may assume $x\cdot f_k=-1$, which implies $y\cdot f_i, y\cdot e_j\geq 0$ for all $i$ and $j$. Since we are assuming $\sigma_i\geq 1$ for all $i$, the only vector $w\in L$ with the property that $w\cdot f_i, w\cdot e_j\geq 0$ for all $i$ and $j$ is the vector $w=0$. Therefore, $y=0$ and $z$ is irreducible.
\end{proof}

\begin{lem}\label{lem:fracirredcondition}
The vector $x_F \in L_F$ is irreducible if and only if $x_F$ is in the form $x_F=\pm (v_a + \dotsb + v_b)$, for some $0\leq a \leq b \leq m$.
\end{lem}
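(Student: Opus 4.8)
The plan is to exploit the chain structure of the basis $v_0, \dotsc, v_m$, where $v_i \cdot v_j = -1$ when $|i-j|=1$, equals $\lVert v_i\rVert^2$ when $i=j$, and vanishes otherwise. This is exactly the intersection pattern of a linear chain, so $L_F$ is a graph lattice of a path-like graph (each $v_i$ behaves like a vertex, with a single edge joining consecutive ones), and I want to transport the irreducibility results from Section~3 into this setting. The "if" direction should be the easier half: I would show directly that any consecutive sum $v_a + \dotsb + v_b$ is irreducible, presumably by exhibiting it as $[R]$ for a connected segment $R$ whose complement is also connected, and then invoking Lemma~\ref{lem:irreducible} (or reproducing its short argument), after checking that the $v_i$ genuinely realize a graph lattice structure on $L_F$.

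\textbf{The forward (``only if'') direction.} Here I would argue the contrapositive: if $x_F$ is \emph{not} of the form $\pm(v_a + \dotsb + v_b)$, then it is reducible. Writing $x_F = \sum_{i} c_i v_i$ in the chain basis, being of the prescribed form means the coefficients $c_i$ are all $0$ or $1$ on a single contiguous block (up to an overall sign). So I must handle two kinds of failure: (i) the support of the coefficients is disconnected, i.e.\ there is a gap $c_i = 0$ separating two nonzero blocks, in which case the two blocks are orthogonal (since $v_i \cdot v_j = 0$ for $|i-j|>1$) and give an orthogonal splitting with $x \cdot y = 0 \geq 0$; and (ii) some coefficient has $|c_i| \geq 2$ or two coefficients have opposite signs, which I would convert into a splitting with nonnegative product using the pairing identity $v_i\cdot v_j = -1$ on adjacent indices. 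The cleanest route is probably to express $x_F$ back in the $e_j$ coordinates as $x_F = \sum_j (x_F \cdot e_j)\, e_j$ and show that if it is not a signed interval of $v$'s, then some $e_j$-coefficient fails the pattern forced by an interval, producing a decomposition.

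\textbf{Using the $e$-coordinate description.} The slicker phrasing I would try first: each $v_k'$ has the form $-e_k + e_{k+1} + \dotsb + e_{k'}$ (and $v_0 = e_0 + \dotsb + e_{\min M}$), so a signed consecutive sum $v_a + \dotsb + v_b$ telescopes into a vector whose $e_j$-coordinates are all in $\{-1,0,1\}$ with a specific connected pattern. The key observation is that $x_F \in L_F$ irreducible should force $x_F \cdot e_j \in \{-1,0,1\}$ for all $j$ with a connected support of a prescribed shape; any deviation lets me split off a summand. I would make this precise by mimicking the proof of Lemma~\ref{lem:generalirred}, where the same ``each summand $(x\cdot e_j)(y\cdot e_j)\leq 0$'' trick was used to force all cross-terms to vanish.

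\textbf{Anticipated main obstacle.} The hard part will be the bookkeeping in the reducible direction, specifically verifying that when the coefficient pattern fails, the natural candidate decomposition $x_F = x + y$ actually has \emph{both} summands lying in $L_F$ (not merely in $\mathbb{Z}^{t+s+1}$) and satisfies $x \cdot y \geq 0$. Membership in $L_F$ requires $x, y \in \langle w_1, \dotsc, w_l\rangle^\perp \cap \langle e_0, \dotsc, e_s\rangle$, and since the $v_i$ are by construction a basis of $L_F$, any integer combination of them is automatically in $L_F$; so the real work is checking the sign of $x\cdot y$, which is where the chain relations $v_i \cdot v_{i+1} = -1$ must be handled carefully to ensure the adjacent negative contributions do not push the product below zero. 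I expect the orthogonal (disconnected-support) case to be immediate and the ``repeated or oppositely-signed coefficient'' case to need the most care.
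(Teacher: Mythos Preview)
Your overall instinct to realise $L_F$ as a graph lattice and then invoke Lemma~\ref{lem:irreducible} is exactly what the paper does, but you have not quite identified the right graph, and this is what makes your ``only if'' direction look harder than it needs to be. The lattice $L_F$ has rank $m+1$, so it cannot be the graph lattice of a path on the $m+1$ vertices $v_0,\dotsc,v_m$ alone (that would have rank $m$). The paper's key observation is to adjoin one further vertex
\[
w = -(v_0+\dotsb+v_m),
\]
so that the vertex set is $\{v_0,\dotsc,v_m,w\}$. The pairing $v_i\cdot v_{i+1}=-1$ gives a single edge between consecutive $v_i$'s, and the remaining degree $\norm{v_i}-2$ (or $\norm{v_i}-1$ at the endpoints $v_0,v_m$) is absorbed by edges to $w$; since every $\norm{v_i}\geq 2$, this is a genuine graph, and one checks that $L_F\cong\Lambda(G)$ for this $G$. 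With this in hand, Lemma~\ref{lem:irreducible} handles \emph{both} directions at once: the irreducible elements are exactly $[R]$ for $R$ connected with connected complement, and since we may assume $w\notin R$, such $R$ are precisely the intervals $\{v_a,\dotsc,v_b\}$ in the path.

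Your proposed direct case analysis for the ``only if'' direction (disconnected support, a coefficient of absolute value $\geq 2$, or a sign change) would also succeed, and your sketch of the disconnected case is fine. But the repeated/opposite-sign cases, while not hard, require exactly the bookkeeping you anticipate, whereas the graph-lattice route above dispatches everything in two sentences. The $e_j$-coordinate approach you outline in your third paragraph is workable too, but again more laborious than simply identifying the correct graph.
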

\begin{proof}
Observe that we may consider $L_F$ as a graph lattice $\Lambda(G)$ of a connected graph with vertices $v_0, \dotsc , v_m , w= -(v_0+ \dotsb + v_m)$. The vertices $v_0, \dotsc, v_m$ span a path in $G$ and both the end points of this path, $v_0$ and $v_m$ are connected to the vertex $w$. Now we may appeal to Lemma~\ref{lem:irreducible}. Let $G_1$ be a non-empty connected subgraph with connected complement. We may assume that $w\notin G_1$. Thus $G_1$ is connected subgraph of the path spanned by $v_0, \dotsc, v_m$. This implies there are $a\leq b$, such that $G_1$ has vertex set $v_a, \dotsc , v_b$. One can see that any such $G_1$ also has a connected complement.
\end{proof}
\begin{lem}\label{lem:fracpartirred}
If $x \in L$ is irreducible, then $x_F\in L_F$ is also irreducible.
\end{lem}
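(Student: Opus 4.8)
### Proof Proposal for Lemma~\ref{lem:fracpartirred}

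The plan is to prove the contrapositive: if $x_F$ is reducible in $L_F$, then $x$ is reducible in $L$. Suppose that $x_F = u + w$ for non-zero $u, w \in L_F$ with $u \cdot w \geq 0$. The key idea is to use the splitting $x = x_I + x_F - x_0 e_0$ together with the product formula \eqref{eq:splittingproduct} to lift the reduction of $x_F$ to a reduction of $x$. The natural candidates for the decomposition of $x$ are built from $u$ and $w$, but since the $e_0$-components of $u$ and $w$ need not match the $e_0$-component of $x$, I will need to distribute the pieces $x_I$ and $-x_0 e_0$ carefully. Concretely, I would write $x = (x_I - x_0 e_0 + u) + w$ or a similar grouping, checking that both summands lie in $L$ and that their product is non-negative.

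The first step is to verify that the proposed summands are genuinely in $L$, i.e. orthogonal to each $w_k$. Since $u, w \in L_F$, they are already orthogonal to $w_1, \dotsc, w_l$ by definition of $L_F$; the only condition to check is orthogonality to $w_0 = e_0 + \sigma_1 f_1 + \dotsb + \sigma_t f_t$. For $1 \leq i \leq m$ we have $v_i \cdot w_0 = 0$, so the only troublesome basis vector of $L_F$ is $v_0 = e_0 + \dotsb + e_{\min M}$, which satisfies $v_0 \cdot w_0 = e_0 \cdot e_0 = 1$. Thus the $e_0$-bookkeeping is exactly governed by the $v_0$-coordinate, and this is where the term $-x_0 e_0$ must be absorbed to restore membership in $L$.

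The second step is the product computation. Using \eqref{eq:splittingproduct}, the pairing between the two proposed summands decomposes into an $L_I$-part, an $e_0$-part, and an $L_F$-part. The $L_F$-part contributes $u \cdot w \geq 0$ by hypothesis, and the remaining terms involve $x_I$ and $x_0$, which I expect to control using Lemma~\ref{lem:usefulbound} or a direct sign analysis analogous to the proof of Lemma~\ref{lem:generalirred}. The goal is to show the total product is $\geq 0$ while both summands are non-zero, contradicting the irreducibility of $x$.

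The main obstacle will be handling the $e_0$-coordinate cleanly: because $v_0$ pairs non-trivially with $w_0$, a naive decomposition inherited from $L_F$ will generally fail to land in $L$, and the correction by $-x_0 e_0$ must be apportioned in a way that preserves both the lattice-membership and the non-negativity of the cross term. I anticipate splitting into cases according to which summand ($u$ or $w$) carries the $v_0$-component, and possibly according to the sign of $x_0$, mirroring the case analysis (e.g.\ the treatment of irreducible $\pm(v_a + \dotsb + v_b)$) in Lemma~\ref{lem:fracirredcondition}. Once the correct grouping is identified, the non-negativity of the product should follow from \eqref{eq:splittingproduct} together with the fact that both summands remain non-zero whenever $u$ and $w$ are.
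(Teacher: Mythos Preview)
Your contrapositive strategy has a genuine gap at exactly the point you flag as ``the main obstacle,'' and the paper's proof takes a quite different route.

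Concretely: your proposed decomposition $x=(x_I-x_0e_0+u)+w$ has $w\in L$ only when $w\cdot w_0=0$. Since $w\in L_F\subset\langle e_0,\dotsc,e_s\rangle$ and $w_0=e_0+\sum\sigma_if_i$, this condition reads $w\cdot e_0=0$, i.e.\ the $v_0$--coefficient of $w$ must vanish. Nothing in ``$x_F$ is reducible'' guarantees a reduction $x_F=u+w$ with one summand having zero $v_0$--coefficient; for instance if $c_0=x_0\ge 2$ one can reduce $x_F$ as $(v_0+\dotsb)+(v_0+\dotsb)$ with both pieces carrying $v_0$. Your plan to ``split into cases according to which summand carries the $v_0$--component'' implicitly assumes at most one does, which is the heart of the matter. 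Fixing this would require either (a) proving that every reducible element of $L_F$ admits a reduction with one piece orthogonal to $e_0$, or (b) simultaneously decomposing $x_I$ in $L_I$ to match the $e_0$--bookkeeping --- and you offer no mechanism for either. Note also that your outline never invokes the changemaker condition on $w_0$; since the lemma is false for generic $w_0$, any valid proof must use it somewhere.

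The paper's argument is not by contrapositive. Instead, for irreducible $x$ with $x_0>0$ it \emph{constructs} a specific candidate $z_F=v_0+\dotsb+v_{g_F}\in L_F$ (chosen so that $(x_F-z_F)\cdot z_F\ge x_0-1$), uses the changemaker condition to build a companion $z_I\in L_I$ with $(x_I-z_I)\cdot z_I\ge x_0-2$, and then sets $z=z_I+z_F-e_0\in L$. The splitting formula \eqref{eq:splittingproduct} gives $(x-z)\cdot z\ge -1$, and irreducibility of $x$ forces $x=z$ (or, in the borderline case, $x=x_I+z_F-e_0$). Either way $x_F=z_F$, which is irreducible by Lemma~\ref{lem:fracirredcondition}. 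The changemaker property enters precisely in producing $z_I$, which is why your outline, lacking it, cannot close.
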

\begin{proof}
Let $x_0=x\cdot e_0$. We may write $x=x_F+x_I - x_0e_0$, with $x_I\in L_I$ and $x_F \in L_F$. If $x_0=0$, then $x_I,x_F \in L$ and $x_I \cdot x_F=0$. Since $x=x_I + x_F$, irreducibility of $x$ implies that $x_F=x$ or $x_F=0$.
\paragraph{} Now we suppose $x_0\ne 0$. We may assume that $x_0>0$ and that $x_F=\sum_{i=1}^{m}c_iv_i$, where $c_0=x_0$. Let $g_F>0$ be minimal such that $c_{g_F+1}\leq 0$. For convenience, we will take $c_{m+1}=0$. Now consider
$$z_F=v_0+ \dotsb + v_{g_F} \in L_F.$$
By Lemma~\ref{lem:fracirredcondition}, this is irreducible. We shall prove the lemma by showing $x_F=z_F$. First we need to bound the quantity $(x_F - z_F)\cdot z_F$. We have
\begin{align*}
(x_F - z_F)\cdot z_F &= \sum_{i=0}^{g_F}v_i \cdot (x_F-z_F) \\
    &= (\norm{v_0}-1)(c_0-1) + \sum_{i=1}^{g_F-1}(\norm{v_i}-2)(c_i -1) + (\norm{v_{g_F}}-1)(c_{g_F}-1) - c_{g_F+1}\\
    &\geq (\norm{v_0}-1)(c_0-1)\\
    &=(\norm{v_0}-1)(x_0-1).
\end{align*}
In particular, this shows
\begin{equation}\label{eq:bound1}
(x_F - z_F)\cdot z_F \geq x_0-1.
\end{equation}
Now let $g_I$ be minimal such that $x\cdot f_{g_I}\leq 0$. By Proposition~\ref{prop:CMcondition}, there is $A \subseteq \{1, \dotsc, g_I - 1\}$ such that $\sigma_{g_I} - 1 =\sum_{i \in A} \sigma_i$. Hence, if we define
$$z_I=-f_{g_I} + e_0 + \sum_{i \in A}f_i,$$
then $z_I \in L_I$. Since
\begin{equation*}
(x_I-z_I)\cdot z_I = -x\cdot e_{g_I} - 1 + \sum_{i\in A}(x\cdot e_i -1) + (x_0-1),
\end{equation*}
we have the bound
\begin{equation}\label{eq:bound2}
(x_I-z_I)\cdot z_I\geq x_0 -2.
\end{equation}
Now consider
$$z=z_I+z_F-e_0\in L.$$
Using \eqref{eq:splittingproduct} along with the inequalities \eqref{eq:bound1} and \eqref{eq:bound2}, we have
\begin{align*}
(x-z)\cdot z &= (x_I-z_I)\cdot z_I - (x_0-1) + (x_F - z_F)\cdot z_F \\
    &\geq x_0-2 \geq -1.
\end{align*}
If $(x-z)\cdot z\geq 0$, then the irreducibility of $x$ implies that $x=z$. Otherwise the above inequality shows that $(x-z)\cdot z=-1$ and in particular that $x_0=1$. Thus we may consider $z'=x_I+z_F-e_0 \in L$. Since
$$(x-z')\cdot z'=(x_F-z_f)\cdot z_F \geq x_0 -1 =0,$$
it follows that $x=z'$. In either case $x_F=z_F$ which is irreducible.
\end{proof}

\subsection{Indecomposability}
Now we study the indecomposability of $p/q$-changemaker lattices. Recall that the lattice $L$ is indecomposable if it cannot be written as an orthogonal direct sum $L=L_1 \oplus L_2$ with $L_1,L_2 \ne 0$.
\begin{lem}\label{lem:indecomp}
Let $L$ be a $p/q$-changemaker lattice for $q\geq 2$. The following are equivalent:
\begin{enumerate}[(i)]
    \item $L$ is indecomposable.
    \item $\sigma_i\geq 1$ for $t\geq i\geq 1$
    \item $L$ contains no vectors of norm 1.
\end{enumerate}
\end{lem}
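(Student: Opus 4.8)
The plan is to prove the cycle $(i)\Rightarrow(iii)\Rightarrow(ii)\Rightarrow(i)$, with essentially all of the work in the final implication. Everything rests on first classifying the norm-one vectors of $L$. Since each $w_k$ with $k\geq1$ is supported on $e_0,\dots,e_s$ and the indices satisfy $m_0=0<m_1<\dots<m_l=s$ (Definition~\ref{defn:CMlattice}), the half-open intervals $(m_{k-1},m_k]$ partition $\{1,\dots,s\}$; for $j$ in the $k$-th interval one has $e_j\cdot w_k=1$, and $e_0\cdot w_0=1$, so no $e_j$ lies in $L$. On the other hand $f_i\cdot w_k=0$ for $k\geq1$ while $f_i\cdot w_0=\sigma_i$, so $\pm f_i\in L$ exactly when $\sigma_i=0$. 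As the only unit vectors of $\mathbb{Z}^{t+s+1}$ are the $\pm f_i$ and $\pm e_j$, the norm-one vectors of $L$ are precisely the $\pm f_i$ with $\sigma_i=0$. Because the changemaker condition makes $\sigma_1\leq\dots\leq\sigma_t$ with $\sigma_1\leq1$, such a vector exists if and only if $\sigma_1=0$, i.e. if and only if $(ii)$ fails; this gives $(ii)\Leftrightarrow(iii)$ outright.

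For $(i)\Rightarrow(iii)$ I argue contrapositively. If $L$ has a norm-one vector then $\sigma_1=0$; since $\norm{w_0}=n\geq2$ forces $\sum_i\sigma_i^2=n-1\geq1$, there is a nonzero $\sigma_i$, and together with $\sigma_1=0$ this gives $t\geq2$, hence $\operatorname{rank}L=t+s-l\geq2$. A unit vector $u$ always splits off an orthogonal summand, $L=\langle u\rangle\oplus(\langle u\rangle^{\perp}\cap L)$, and the complement is nonzero by the rank bound, so $L$ is decomposable. Thus indecomposability forces the absence of norm-one vectors.

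The substance is $(ii)\Rightarrow(i)$. Assuming $\sigma_i\geq1$ for all $i$, I would exhibit a spanning set of $L$ made of irreducible vectors whose non-orthogonality graph is connected; by the standard decomposition principle (each irreducible vector lies entirely in one summand of any orthogonal splitting, and non-orthogonal irreducibles must share a summand) this forces $L$ to be indecomposable. For the integer directions I take the standard basis $\bar v_1,\dots,\bar v_t$ of $L_I=\langle w_0\rangle^{\perp}\cap\langle e_0,f_1,\dots,f_t\rangle$: writing $f_0:=e_0$ and $\sigma_0:=1$, Proposition~\ref{prop:CMcondition} provides $A_i\subseteq\{0,\dots,i-1\}$ with $\sum_{j\in A_i}\sigma_j=\sigma_i$, and $\bar v_i=-f_i+\sum_{j\in A_i}f_j$ is irreducible by Lemma~\ref{lem:generalirred}. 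For the fractional directions I use the hub $g=v_0-f_1=-f_1+(e_0+e_1+\dots+e_{\min M})$, which lies in $L$ because $\sigma_1=1$ cancels $v_0\cdot w_0=1$, and which is irreducible by Lemma~\ref{lem:generalirred}; the partial sums $g_i=g+v_1+\dots+v_i$ are again of the form $-f_1+\sum_{j\in B_i}e_j$ (the $v_i$ telescope, keeping all $e$-coefficients in $\{0,1\}$), so each $g_i$ is irreducible, and since every $g_i$ carries the term $-f_1$ with all other coordinates nonnegative one gets $g_i\cdot g_j\geq1$: the $g_i$ form a clique.

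To conclude, $\{\bar v_1,\dots,\bar v_t\}\cup\{g_0,\dots,g_m\}$ spans $L$ (via the splitting $x=x_I+x_F-x_0e_0$ of Section~\ref{sec:fracparts}, using $g_i-g_{i-1}=v_i$ and $\bar v_1=-f_1+e_0$). Given a splitting $L=L_1\oplus L_2$, the non-orthogonality graph of $\bar v_1,\dots,\bar v_t$ is connected, so all $\bar v_i$ lie in one summand, say $L_1$; since $g\cdot\bar v_1=2$ we get $g\in L_1$, and the clique property places every $g_i$, hence every $v_i=g_i-g_{i-1}$, in $L_1$ as well, whence the spanning set lies in $L_1$ and $L_2=0$. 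The one nontrivial input here --- and the step I expect to be the main obstacle --- is the connectivity of the standard basis of $L_I$, equivalently the indecomposability of the integer changemaker lattice $\langle w_0\rangle^{\perp}$ whose changemaker vector $(1,\sigma_1,\dots,\sigma_t)$ has no zero entries. This is the genuinely combinatorial point; it is the analogue of Greene's $q=1$ analysis \cite{GreeneLRP}, and if one prefers a self-contained argument it must be proved by induction on $i$, using Proposition~\ref{prop:CMcondition} to choose $A_i$ and tracking the inner products $\bar v_i\cdot\bar v_j$ to show that each $\bar v_i$ meets an earlier basis vector.
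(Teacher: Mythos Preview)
Your argument for $(ii)\Rightarrow(i)$ has a genuine error: the vectors $\bar v_i$ are not, in general, elements of $L$. Take $\bar v_1=-f_1+e_0$; then $\bar v_1\cdot w_1=e_0\cdot(-e_0+e_1+\cdots+e_{m_1})=-1\neq 0$, so $\bar v_1\notin L$. More generally, whenever $0\in A_i$ (which is forced in the tight case $\sigma_i=1+\sigma_1+\cdots+\sigma_{i-1}$, and in particular always for $i=1$), the vector $\bar v_i$ carries an $e_0$ term and fails to be orthogonal to $w_1$. Since these vectors do not lie in $L$, Lemma~\ref{lem:generalirred} does not apply to them, and the sentence ``all $\bar v_i$ lie in one summand, say $L_1$'' is meaningless. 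The underlying issue is the identification $f_0:=e_0$: it is fine inside $L_I$, but $L_I$ is \emph{not} a sublattice of $L$; only $L_I\cap\langle e_0\rangle^{\perp}$ is, since $x\in L_I$ lies in $L$ precisely when $x\cdot e_0=0$.

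The paper's proof handles exactly this point by using $v_0$ in place of $e_0$: in the tight case it takes $u_k=-f_k+f_{k-1}+\cdots+f_1+v_0$, which \emph{does} lie in $L$ because $v_0\in L_F$ is orthogonal to $w_1,\dots,w_l$. Observe that $u_1=-f_1+v_0$ is precisely your hub $g$, so your own construction already contains the repair; you just need to use it throughout. With this corrected basis $\{u_1,\dots,u_t,v_1,\dots,v_m\}\subseteq L$, the paper then gives a short self-contained inductive connectivity argument (tight $u_k$ satisfy $u_k\cdot u_1=\norm{v_0}-1\geq 1$; non-tight $u_k$ satisfy $u_k\cdot u_{\min A_k}=-1$), thereby avoiding the appeal to Greene's $q=1$ analysis that you flag as the ``main obstacle'' and leave unproved. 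Once you substitute $v_0$ for $e_0$, your strategy coincides with the paper's and the deferred step becomes the two-line induction just described.
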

\begin{proof}
This is the natural generalisation of \cite[Lemma~2.10]{mccoy2013alternating}. Since $f_i\cdot w_0 =\sigma_i$, it is clear that $f_i \in L$ if and only if $\sigma_i =0$. For any $e_i$, there is always some $w_j$, for which $w_j \cdot e_i =1$. This shows $(ii) \Leftrightarrow (iii)$. Furthermore it is clear that if $\sigma_i=0$ for some $i$, then $L \cong \frac{L}{\langle f_i \rangle}\oplus \langle f_i \rangle$. This gives the implication $(i) \Rightarrow (ii)$.
\paragraph{}Now we prove $(ii)\Rightarrow (i)$. For each $1\leq k\leq t$, Proposition~\ref{prop:CMcondition} shows that either
$$\sigma_k=1+\sigma_1 + \dotsb + \sigma_{k-1},$$
or there is $A_k \subseteq\{1,\dotsc, k-1 \}$ such that
$$\sigma_k = \sum_{i\in A_k} \sigma_i.$$
In the first case, define
$$u_k=-f_k+f_{k-1}+ \dotsb + f_1 + v_0 \in L,$$
and in the second, we may define
$$u_k =-f_k + \sum_{i\in A_k} f_i \in L.$$
Observe that the set $B=\{u_1, \dotsc, u_t, v_1, \dotsc v_m\}$ forms a basis for $L$. By Lemma~\ref{lem:fracirredcondition} and Lemma~\ref{lem:generalirred}, every element of $B$ is irreducible. Suppose that we can write $L=L_1\oplus L_2$. Observe that if $x \notin L_1 \cup L_2$, then it can be written as $x=x_1+x_2$, with $x_i\in L_i \setminus \{0\}$ and $x_1\cdot x_2=0$. In particular, $x$ is reducible. Therefore, we must have $B\subseteq L_1\cup L_2$.

Without loss of generality, suppose $u_1=-f_1+v_0 \in L_1$. We will show that this implies $B \subseteq L_1$. Since $v_1\cdot u_1=-1$ and $v_i\cdot v_{i+1}=-1$ for all $1\leq i <m$, it follows that $v_i \in L_1$ for all $i$. Consider now $u_k$ for some $k>1$. First, suppose that $u_k=-f_k+f_{k-1}+ \dotsb + f_1 + v_0$. In this case, $u_1 \cdot u_k= \norm{v_0}-1\geq1$, which implies $u_k \in L_1$. Now suppose that $u_k =-f_k + \sum_{i\in A_k} f_i$. Let $l=\min A_k$, so that $u_k\cdot u_l=-1$. This allows us to prove inductively that $u_k \in L_1$ for all $k$. Therefore we have $B\subseteq L_1$ and hence $L=L_1$. This proves that $L$ is indecomposable, which completes the proof.
\end{proof}

\section{Alternating diagrams and changemaker lattices}\label{sec:main}
The objective of this section is to show that if $D$ is a reduced alternating diagram with
$$\Lambda_D \cong \langle w_0, \dotsc, w_l\rangle^\bot \subseteq \mathbb{Z}^{r+s+1},$$
where $L=\langle w_0, \dotsc, w_l\rangle^\bot$ is a $p/q$-changemaker lattice with $p>q\geq 2$, then there is a sequence of flypes to obtain an alternating diagram which can be obtained by rational tangle replacement from an almost-alternating diagram of the unknot. Fix a choice of isomorphism,
$$\iota_D: \Lambda_D \longrightarrow L.$$
This gives a distinguished collection of vectors in $L$ given by the image of the vertices of $\Gamma_D$. We call this collection $V_D$, and in an abuse of notation we will fail to distinguish between a vertex of $\Gamma_D$ and the corresponding element in $V_D$.

Since $D$ is reduced, $\Gamma_D$ contains no self loops or cut edges. In particular, there are no vectors of norm 1 in $\Lambda_D$, so Lemma~\ref{lem:indecomp} implies that $\Lambda_D$ is indecomposable. By Lemma~\ref{lem:2connectgraphlat}, $\Gamma_D$ is 2-connected and any $v \in V_D$ is irreducible.

It will be necessary for us to flype $D$ to obtain a new reduced alternating diagrams. In all cases, this flype will be an application of Lemma~\ref{lem:flype1} or a flype as appearing in Figure~\ref{fig:flype2}. In either case, if $D'$ is the diagram we obtain from such a flype, then we get a natural choice of $V_{D'}\subset L$ and hence an isomorphism
$$\iota_{D'}: \Lambda_{D'} \rightarrow L.$$
Whenever we flype, we will implicitly use these choices of isomorphism to speak of $V_{D'}$ without ambiguity.

\subsection{The half-integer case}
Alternating diagrams for which the Goeritz lattice is isomorphic to a $p/2$-changemaker lattice were studied in the proof of Theorem~\ref{thm:unknotting}. In this case, we have a diagram with Goeritz form $\Lambda_D$ with
$$\Lambda_D\cong L= \langle w_0, e_1-e_0\rangle^\bot\subseteq \langle f_1, \dotsc , f_t, e_0, e_1 \rangle = \mathbb{Z}^{t+2}.$$
For every $x\in L$ we have $z\cdot e_0 = z\cdot e_1$ and there are precisely two vertices in $u \in V_D$ with $u\cdot e_0 \ne 0$. These can be written in the form $u_1+e_0+e_1$ and $u_2-e_0-e_1$, where $u_1\cdot e_0=u_2\cdot e_0=0$. Any crossing between the regions corresponding to these two vectors is called a {\em marked crossing}. For example, see Figure~\ref{fig:markedcrossing}.
\begin{figure}[h]
  \centering
  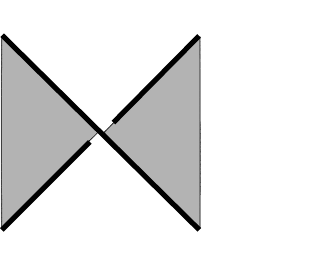
 \caption{A marked crossing.}
 \label{fig:markedcrossing}
\end{figure}
Any marked crossing is an unknotting crossing. The following is an amalgamation of results \cite[Theorem~3 and Theorem~4]{mccoy2013alternating}.
\begin{thm}\label{thm:markedmeansunknotting}
Let $D$ be a reduced alternating diagram of $K$, and suppose that the lattice $\Lambda_D$ is isomorphic to a $p/2$-changemaker lattice. Then there is at least one marked crossing in $D$, and any marked crossing is also an unknotting crossing, which is negative if $\sigma(K)=0$ and positive if $\sigma(K)=-2$.
\end{thm}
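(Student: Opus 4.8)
The plan is to locate the marked crossing algebraically, to prove that changing it unknots $K$ by induction on the crossing number, and finally to read off its sign from the signature.

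First I would use the fractional part machinery of Section~\ref{sec:fracparts} to constrain the vertices. Since $D$ is reduced, $\Gamma_D$ has no cut-edges or self-loops, so $\Lambda_D$ has no vectors of norm $1$; hence Lemma~\ref{lem:indecomp} forces $\sigma_i \ge 1$ for all $i$ and makes $\Lambda_D$ indecomposable, and Lemma~\ref{lem:2connectgraphlat} shows that every $v \in V_D$ is irreducible. For $q=2$ the fractional part is the rank-one lattice $L_F = \langle e_0 + e_1\rangle$, so Lemma~\ref{lem:fracpartirred} together with Lemma~\ref{lem:fracirredcondition} shows that the $L_F$-component of any vertex is $0$ or $\pm(e_0+e_1)$; in particular $v\cdot e_0 = v\cdot e_1 \in \{-1,0,1\}$ for every $v\in V_D$. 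Because $\sum_{v\in V_D} v = 0$, the vertices with $v\cdot e_0 = +1$ balance those with $v\cdot e_0 = -1$, and an argument using $v\cdot w_0 = 0$ and Proposition~\ref{prop:CMcondition} shows there is exactly one of each; these are the vectors $a = u_1 + e_0 + e_1$ and $b = u_2 - e_0 - e_1$ with $u_1, u_2 \perp e_0,e_1$. Setting $v_0 = e_0+e_1$, one computes $a\cdot b = u_1\cdot u_2 - \norm{v_0} = u_1\cdot u_2 - 2$, and since $a\cdot b = -e(a,b) \le 0$ counts the crossings between the two regions, the inequality $u_1\cdot u_2 \le 1$ (which I would extract from the irreducibility of $a,b$ and the relations $\sum_i\sigma_i(u_1\cdot f_i) = -1$ and $\sum_i\sigma_i(u_2\cdot f_i) = +1$) yields at least one marked crossing.

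Next I would prove that changing a marked crossing $c$ gives the unknot, by induction on the number of crossings of $D$. The geometric content is that $c$ joins the two regions $a$ and $b$, so after the crossing change an adjacent crossing cancels by a Reidemeister~II move; I would first apply the flypes of Lemma~\ref{lem:flype1} and of Figure~\ref{fig:flype2} to bring the neighbourhood of $c$ into a standard form in which this cancellation is visible, and then check that the resulting reduced alternating diagram has Goeritz lattice isomorphic to a changemaker lattice of strictly smaller complexity, to which the inductive hypothesis applies; the base cases are the twist regions that collapse directly to the unknot. The main obstacle is exactly this step: arranging the flypes so that the crossing change is genuinely local and so that the simplified lattice is recognisably a smaller changemaker lattice. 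This bookkeeping --- matching up the vertices of the flyped diagram with the changemaker basis and verifying that irreducibility and the changemaker conditions persist after the reduction --- is where essentially all of the work lies.

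Finally, for the sign I would compute $\sigma(K)$ from the positive-definite Goeritz form $\Lambda_D$ via the Gordon--Litherland formula; for these lattices this forces $\sigma(K)\in\{0,-2\}$. Having established that $c$ is an unknotting crossing, I would then invoke Proposition~\ref{prop:Montesinos}, applied to $K$ when $c$ is negative and to the mirror $\overline K$ when $c$ is positive. Since $\Lambda_D$ has discriminant $p = \det K > 0$ and the orientation of $\Sigma(K)$ as a surgery is pinned down by the positive-definiteness of $\Lambda_D$, the formula $\delta = (-1)^{\sigma(K)/2}\det K$ together with the fixed sign of the surgery coefficient leaves only one consistent possibility in each case: a negative crossing forces $\sigma(K)=0$ and a positive crossing forces $\sigma(K)=-2$. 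This last step is careful sign bookkeeping rather than a genuine difficulty.
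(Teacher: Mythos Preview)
The paper does not prove this theorem at all: it is quoted from the author's earlier paper \cite{mccoy2013alternating} (the sentence immediately preceding the statement says it is ``an amalgamation of results [Theorem~3 and Theorem~4]'' of that reference). So there is no proof here to compare your proposal against; the present paper treats Theorem~\ref{thm:markedmeansunknotting} as a black box input used to finish $(ii)\Rightarrow(iii)$.

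On the substance of your sketch: the existence part is fine and is essentially the $q=2$ specialisation of Lemma~\ref{lem:markersunique} and Lemma~\ref{lem:markerbounds} (with $m=0$, $v_F=e_0+e_1$, $w_F=-e_0-e_1$, so $v\cdot w\le v_F\cdot w_F+1=-1$). The real content is the claim that a marked crossing unknots, and here your plan is too thin. Changing a crossing in an alternating diagram produces an almost-alternating diagram, and there is no automatic Reidemeister~II cancellation ``adjacent to $c$''; the actual argument in \cite{mccoy2013alternating} is a lengthy case analysis on the changemaker coefficients that identifies specific local configurations (and uses more than just Lemma~\ref{lem:flype1} and the flypes of Figure~\ref{fig:flype2}) so that the almost-alternating diagram can be reduced. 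Your acknowledgement that ``essentially all of the work lies'' in this bookkeeping is honest, but what you have written is a hope rather than a proof.

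For the sign, your idea of feeding the established unknotting crossing back into Proposition~\ref{prop:Montesinos} is on the right track, but you need to be careful: that proposition only tells you the surgery coefficient on \emph{some} $\kappa$, not that $\kappa$ is the knot arising from the changemaker hypothesis, so you must also pin down the sign of the half-integer surgery directly from the positive-definiteness of $\Lambda_D$ (e.g.\ via $|H_1|=\det K=p$ and the orientation of $\Sigma(K)$), and then compare. This is doable but not quite the one-line invocation you suggest.
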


\subsection{The fractional tangle}
In order to prove Theorem~\ref{thm:rationalsurgery}, we will study the vertices in $V_D$ which have a non-zero fractional part. This will allow us to obtain a diagram in which $L_F$ specifies a tangle. We will take $v_0, \dotsc, v_m$ to be the basis of $L_F$ as constructed in Section~\ref{sec:fracparts}.
First we show that there is a sequence of flypes to a diagram in which $v_1,\dotsc , v_m$ are vertices.

\begin{lem}\label{lem:flypetorational}
We may flype to obtain a diagram in which $v_1,\dotsc , v_m$ are vertices.
\end{lem}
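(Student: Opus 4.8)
The plan is to realize the chain $v_1,\dotsc,v_m$ one vector at a time, by repeated application of Lemma~\ref{lem:flype1}, working downwards from $v_m$ to $v_1$. Since $\Gamma_D$ is $2$-connected, every $u\in V_D$ is irreducible, so Lemma~\ref{lem:fracpartirred} and Lemma~\ref{lem:fracirredcondition} tell me that the fractional part of each vertex is either $0$ or of the form $\pm(v_a+\dotsb+v_b)$. Writing $u_F=\sum_i c_i(u)\,v_i$, this means each $c_i(u)\in\{-1,0,1\}$ and the nonzero coefficients occupy a single interval carrying a common sign. I would use repeatedly that $\sum_{u\in V_D}u=0$ in $L$ (the relation $[V]=0$ in the graph lattice, which is preserved by the flypes of Lemma~\ref{lem:flype1}, since replacing $v,u_1,u_2$ by $x,y,u_1+u_2$ changes the sum by $-v+x+y=0$), that distinct vertices satisfy $u\cdot u'\le 0$, and that, since each $v_j$ is purely fractional, \eqref{eq:splittingproduct} gives $u\cdot v_j=u_F\cdot v_j=-c_{j-1}(u)+\norm{v_j}\,c_j(u)-c_{j+1}(u)$.

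The induction maintains that $v_{j+1},\dotsc,v_m$ are already vertices (each equal to the purely fractional vector $v_i$, so $c_{i'}(v_i)=0$ for $i'\ne i$), and seeks to make $v_j$ a vertex without disturbing this tail. The heart of the argument is to produce a vertex whose fractional part is an interval ending \emph{exactly} at index $j$ with positive sign, say $u_F=v_a+\dotsb+v_j$ with $a<j$. The existence of some vertex with $c_j(u)=+1$ follows from $\sum_u c_j(u)=0$ together with the fact that $c_j$ is not identically zero (otherwise, taking fractional parts in an expansion $v_j=\sum_u\lambda_u u$ would force the coefficient of $v_j$ to vanish): the coefficients lie in $\{-1,0,1\}$ and sum to zero, so both a $+1$ and a $-1$ occur.

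The key point, and the step I expect to be the main obstacle, is to show that such an interval cannot extend past $j$. Suppose a vertex $u$ had $c_j(u)=c_{j+1}(u)=+1$. Since $v_{j+1}$ is a tail vertex and $c_j(u)=1\ne 0=c_j(v_{j+1})$, we have $u\ne v_{j+1}$, hence $u\cdot v_{j+1}\le 0$; plugging into the inner-product formula gives $-1+\norm{v_{j+1}}-c_{j+2}(u)\le 0$, and as $\norm{v_{j+1}}\ge 2$ this forces $c_{j+2}(u)=+1$ and $\norm{v_{j+1}}=2$. Iterating up the realized tail forces $c_{j'}(u)=+1$ and $\norm{v_{j'}}=2$ for every $j'\le m$; but at the top $u\cdot v_m=-1+\norm{v_m}\ge 1>0$ contradicts $u\cdot v_m\le 0$. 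Therefore every vertex with $c_j(u)=+1$ has $c_{j+1}(u)=0$, so $u_F=v_a+\dotsb+v_j$; and $a<j$, since $a=j$ would give $u_F=v_j$, whence irreducibility would force $u=v_j$, contradicting that $v_j$ is not yet a vertex. (The base case $j=m$ is the same argument with the cascade step vacuous, since there is no $v_{m+1}$.)

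Given such a $u$, I would set $y=v_j$ and $x=u-v_j$; because $a<j$ the vector $v_{j-1}$ lies in the interval, so $x\cdot y=u_F\cdot v_j-\norm{v_j}=-1$, and Lemma~\ref{lem:flype1} applies to produce a diagram in which $v_j$ is a vertex. Finally I would check that the flype leaves $v_{j+1},\dotsc,v_m$ untouched: the two merged vertices $u_1,u_2$ are characterised by $u_1\cdot x>0$ and $u_2\cdot v_j>0$, yet every tail vertex $v_i$ with $i\ge j+1$ satisfies $v_i\cdot v_j\le 0$ and $v_i\cdot x=v_i\cdot(v_a+\dotsb+v_{j-1})=0$ by the locality relation $v_i\cdot v_{i'}=0$ for $|i-i'|>1$. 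Hence neither $u_1$ nor $u_2$ is a tail vertex, the tail survives, and $v_j$ is added to $V_D$. The largest unrealized index strictly decreases at each stage, so after finitely many flypes all of $v_1,\dotsc,v_m$ are vertices.
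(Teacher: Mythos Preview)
Your proof is correct and follows essentially the same strategy as the paper: pick the maximal $j$ with $v_j$ not a vertex, locate a vertex $u$ whose fractional part is an interval $v_a+\dotsb+v_j$ with $a<j$, apply Lemma~\ref{lem:flype1} with $x=u-v_j$, $y=v_j$, and check that the already-realized tail $v_{j+1},\dotsc,v_m$ survives. The only tactical differences are that the paper finds $u$ by writing the irreducible vector $v_j$ as $[R]$ and picking $u\in R$, and that the paper rules out $b>j$ in one stroke by looking at $v_b$ itself (where $u\cdot v_b=\norm{v_b}-1>0$ gives an immediate contradiction), rather than cascading up from $v_{j+1}$; your cascade is a slightly longer route to the same conclusion.
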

\begin{proof}
Let $1\leq c\leq m$ be maximal such that $v_c$ is not a vertex. By Lemma~\ref{lem:fracirredcondition}, $v_c$ is irreducible. Therefore, Lemma~\ref{lem:irreducible} shows that there is $R\subseteq V_D$ such that $v_c=\sum_{x\in R} x$. In particular, this implies that there exists some $u \in V_D$ with $u_F=v_a+ \dotsb +v_b$, for $a\leq c \leq b$. As $v_c$ is assumed not to be a vertex, $v_c\ne u$ and hence $a<b$.  Since
$$v_b \cdot u = v_b \cdot u_F = \norm{v_b} -1>0,$$
it follows that $v_b$ cannot be a vertex. This implies $b=c$, so that $u_F$ takes the form $u_F=v_a+\dotsb + v_c$, for $a<c$.

\paragraph{} Since $(u-v_c)\cdot v_c=v_{c-1}\cdot v_c =-1$, we may apply a flype as given by Lemma~\ref{lem:flype1} to obtain a new diagram $D'$ with $v_c\in V_{D'}$. Since $v_c \cdot v_k \leq 0$ for all $k\ne c$, it follows that $v_{c+1}, \dotsc, v_m$ are still vertices. Iterating this process shows that we may flype to obtain a diagram $\widetilde{D}$ with $v_1,\dotsc , v_m$ in $V_{\widetilde{D}}$.
\end{proof}
 From now on we will assume that we have performed flypes as in Lemma~\ref{lem:flypetorational}, and have a diagram $D$ with $v_1,\dotsc , v_m\in V_D$.
 \begin{lem}\label{lem:markersunique}
 There is a unique vertex $v$ with $v\cdot e_0>0$ and this has $v_F=v_0$. There is a unique vertex $w$ with $w\cdot e_0<0$ and this has $w_F = -(v_0 + \dotsb + v_m)$.
 \end{lem}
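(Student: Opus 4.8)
The plan is to read off $u\cdot e_0$ from the fractional part of each vertex and then pin down the two markers using the vertices $v_1,\dots,v_m$ produced by Lemma~\ref{lem:flypetorational}. Since every $u\in V_D$ is irreducible, Lemma~\ref{lem:fracpartirred} gives that $u_F$ is irreducible in $L_F$, so Lemma~\ref{lem:fracirredcondition} forces $u_F=0$ or $u_F=\pm(v_a+\dots+v_b)$ for some $0\le a\le b\le m$. Because $v_0$ is the only basis vector of $L_F$ with a non-zero $e_0$-component ($v_0\cdot e_0=1$, while $v_i\cdot e_0=0$ for $i\ge1$), and because $u\cdot e_0=u_F\cdot e_0$, the integer $u\cdot e_0$ is exactly the coefficient of $v_0$ in $u_F$ and lies in $\{-1,0,1\}$. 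Thus a vertex has $u\cdot e_0>0$ precisely when $u_F=v_0+\dots+v_b$ and $u\cdot e_0<0$ precisely when $u_F=-(v_0+\dots+v_b)$.

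Next I would determine $b$ in each case. Recall $v_1,\dots,v_m\in V_D$, and each satisfies $\norm{v_i}\ge2$ because a reduced diagram has no nugatory crossings, so $\Lambda_D$ is indecomposable and contains no norm-$1$ vector (Lemma~\ref{lem:indecomp}). If a vertex $v$ had $v_F=v_0+\dots+v_b$ with $b\ge1$, then $v\cdot v_b=v_F\cdot v_b=\norm{v_b}-1\ge1>0$ with $v\ne v_b$, contradicting that distinct vertices pair non-positively; hence $b=0$ and $v_F=v_0$. Symmetrically, if $w_F=-(v_0+\dots+v_b)$ with $b<m$, then $w\cdot v_{b+1}=-(v_b\cdot v_{b+1})=1>0$, again impossible, so $b=m$ and $w_F=-(v_0+\dots+v_m)$. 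The same adjacency trick classifies the remaining vertices: a vertex with $u_F=v_a+\dots+v_b$ and $a\ge1$ is forced to have $a=b$ and $u=v_a$, while one with $u_F=-(v_a+\dots+v_b)$ and $a\ge1$ is forced to have $a=1$ and $b=m$.

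For existence and uniqueness I would use the vanishing of the total vertex sum. Since $\sigma_1=1$ (indecomposability), the vector $-f_1+v_0$ lies in $L$ and has $e_0$-coordinate $1$; as the vertices generate $L$, some vertex has non-zero $e_0$-coordinate. The relation $\sum_{u\in V_D}u=0$ (the image of $[V]=0$) gives $\sum_u u_F=0$, so the number $P$ of positive markers equals the number of negative markers, and is at least $1$. Writing $Q$ for the number of vertices with $u_F=-(v_1+\dots+v_m)$ and inserting the classification above into $\sum_u u_F=0$ yields
\[
\sum_{u\in V_D}u_F=(1-P-Q)(v_1+\dots+v_m)=0 .
\]
When $m\ge1$ the vector $v_1+\dots+v_m$ is non-zero, whence $P=1$ (and $Q=0$), which is exactly the assertion of the lemma.

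The main obstacle is the degenerate case $m=0$, that is $r=q-1$ (which includes the half-integer case $q=2$). There $v_0=e_0+\dots+e_s$ hits every $e_j$ equally, the vector $v_1+\dots+v_m$ vanishes, and the displayed balance relation becomes vacuous, so uniqueness cannot be extracted from the $e$-coordinates at all. In this case one must instead argue through the $f$-coordinates: two distinct positive markers $v=v_0+\sum_i\alpha_if_i$ and $v'=v_0+\sum_i\alpha'_if_i$ would satisfy $v\cdot v'=\norm{v_0}+\sum_i\alpha_i\alpha'_i\le0$ while both are irreducible and obey the changemaker constraint $\sum_i\alpha_i\sigma_i=\sum_i\alpha'_i\sigma_i=-1$. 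Ruling out this configuration is the delicate, integer-part part of the argument, paralleling the half-integer analysis of \cite{mccoy2013alternating}, and I expect it to be where the real work lies.
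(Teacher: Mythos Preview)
Your identification of the fractional parts $v_F=v_0$ and $w_F=-(v_0+\dots+v_m)$ via irreducibility and the sign of pairings with the $v_i$ matches the paper exactly. The divergence is in the uniqueness step.

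Your balance argument is correct and pleasant when $m\ge 1$: the classification of all possible nonzero fractional parts and the identity $\sum_u u_F=(1-P-Q)(v_1+\dots+v_m)=0$ force $P=1$. But as you yourself flag, this collapses when $m=0$ (i.e.\ $r=q-1$, in particular the half-integer case), and your sketch for that case is not a proof. So as written the argument has a genuine gap.

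The paper avoids the case split entirely by arguing through the integer part. Suppose two distinct vertices $u_1,u_2$ both had fractional part $v_0$, and set $U=u_1+u_2$. Choose $g$ minimal with $U\cdot f_g\le 0$, use the changemaker condition (Proposition~\ref{prop:CMcondition}) to write $\sigma_g-1=\sum_{i\in A}\sigma_i$ with $A\subseteq\{1,\dots,g-1\}$, and set $z=-f_g+v_0+\sum_{i\in A}f_i\in L$. A direct computation gives
\[
(U-z)\cdot z \ge -1+\norm{v_0}>0,
\]
contradicting Lemma~\ref{lem:usefulbound} applied to the sum of vertices $U$. This works uniformly for all $m\ge 0$, and uniqueness of $w$ then follows from $\sum_{u\in V_D}u\cdot e_0=0$. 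So the ``delicate, integer-part part'' you anticipated is in fact a three-line application of the graph-lattice inequality and the changemaker condition, not a reprise of the half-integer analysis.
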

 \begin{proof}
 First we will show that any vertex $v\in V_D$ with $v\cdot e_0>0$ necessarily has $v_F=v_0$. Since $v_F$ is irreducible, it is of the form $v_F=v_0+\dotsb + v_b$. If $b>0$, then $v_b \in V_D$ and $v_b \cdot v >0$ is a contradiction. Thus, $v_F=v_0$. Similarly we suppose that $w$ is any vertex with $w\cdot e_0 <0$. By irreducibility of $w_F$, it follows that $w_F= -(v_0 + \dots + v_c)$. If $c<m$ then we have $v_{c+1}\in V_D$ and we get a contradiction from $w\cdot v_{c+1}=1>0$. This proves the statement about the fractional part of $w$.
 \paragraph{} Now we show the uniqueness of $v$. Since $\sum_{x \in V_D} x\cdot e_0 =0$, this is also implies the uniqueness of $w$. Suppose there are vertices $u_1$ and $u_2$, with $(u_1)_{F}=(u_2)_{F} =v_0$. Consider $U=u_1 + u_2$. This has a $U_F=2v_0$. Now let $g$ be minimal such that $U\cdot f_g \leq 0$. By Proposition~\ref{prop:CMcondition}, there is $A\subseteq \{1, \dotsc , g-1 \}$, such that $\sigma_g-1 = \sum_{i \in A}\sigma_i$. Hence we may take $z=-f_g + v_0 + \sum_{i \in A} f_i \in L$.
 Now consider the inequality,
 \begin{align*}
 (U-z)\cdot z &= -V\cdot f_g -1 + \sum_{i \in A}(V\cdot f_i -1) + \norm{v_0} \\
    &\geq -1 + \norm{v_0}>0.
 \end{align*}
 Since this exceeds the bound in Lemma~\ref{lem:usefulbound}, it follows that $U$ is not the sum of distinct vertices. In particular this implies $u_1=u_2$, which gives the required uniqueness statement.
 \end{proof}

 Now let $v$ and $w$ be the vertices as determined by Lemma~\ref{lem:markersunique}. We wish to determine the number of edges between $v$ and $w$. These edges along with the vertices $v_1,\dotsc, v_m$, will provide the rational tangle we are seeking.

\begin{lem}\label{lem:coefbound}
Suppose $x = \sum_{u\in R}u \in L$ for some $R\subseteq V_D$, then $|x\cdot f_1|\leq 2$. Furthermore, if $x$ is irreducible with $x\cdot e_0\ne 0$, then $|x\cdot f_1|\leq 1$.
\end{lem}
\begin{proof}
We may assume $x\cdot f_1\geq 0$. Let $g>1$ be minimal such that $x\cdot f_g\leq 0$. By Proposition~\ref{prop:CMcondition}, we may write $\sigma_g -1 = \sum_{i \in A} \sigma_i$ for some $A \subseteq \{1, \dotsc , g-1\}$. So we have $z=-f_g + f_1 + \sum_{i \in A} f_i\in L$. Let $z\cdot f_1 =\epsilon$ and observe that $\epsilon \in \{1,2\}$. By Lemma~\ref{lem:usefulbound}, $(x-z)\cdot z\leq 0$. This gives
\begin{align*}
0\geq (x-z)\cdot z&= -x\cdot f_g -1 + \sum_{i\in A\setminus \{1\}}(x\cdot f_i -1) + \epsilon(x\cdot f_1-\epsilon)\\
    &\geq -1 + \epsilon(x\cdot f_1-\epsilon).
\end{align*}
Therefore, $x\cdot f_1\leq \epsilon + \frac{1}{\epsilon}\leq \frac{5}{2}$, which gives the required bound.

\paragraph{}Suppose now that $x$ is irreducible, and $x\cdot e_0 > 0$. By Lemma~\ref{lem:fracirredcondition}, this implies that $x\cdot e_0=1$. Let $g>0$ be minimal such that $x\cdot f_g\leq 0$. If $g=1$, then let $z=-f_1 + x_F$. By irreducibility, it follows that either $x=z$ or
$$(x-z)\cdot z= -(x\cdot f_1+1)\geq -1,$$
which implies $x\cdot f_1 =0$.
In either case, $0\geq x\cdot f_1\geq -1$, as required.
Suppose $g>1$, we may write $\sigma_g -1 = \sum_{i \in A} \sigma_i$ for some $A \subseteq \{1, \dotsc , g-1\}$. If $1\in A$, set $z=-f_g + x_F + \sum_{i \in A} f_i$. If $1\notin A$, set $z=-f_g + f_{1} + \sum_{i \in A} f_i$. In either eventuality we get $z\in L$ with $z\cdot f_1=1$. By irreducibility, it follows that either $x=z$ or
$$-1\geq (x-z)\cdot z \geq x\cdot f_1 - 1-(x\cdot f_g+1)\geq x \cdot f_1 -2,$$
which implies $x \cdot f_1 = 1$. Thus we get the necessary bounds on $x\cdot f_1$ in all cases.
\end{proof}

Now we can prove a bound on $v\cdot w$.

\begin{lem}\label{lem:markerbounds}
$v\cdot w \leq v_F \cdot w_F + 1$.
\end{lem}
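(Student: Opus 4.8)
The plan is to play the two sides of the inequality off against the orthogonal splitting \eqref{eq:splittingproduct}. First I would pin down the $e_0$--coordinates of the two marker vertices. Because the decomposition $x=x_I+x_F-(x\cdot e_0)e_0$ has $x_F\cdot e_0=x\cdot e_0$, the identifications $v_F=v_0$ and $w_F=-(v_0+\dotsb+v_m)$ from Lemma~\ref{lem:markersunique}, together with $v_0\cdot e_0=1$ and $v_i\cdot e_0=0$ for $i\geq 1$, force $v\cdot e_0=1$ and $w\cdot e_0=-1$. Substituting these into \eqref{eq:splittingproduct} gives
$$v\cdot w=v_I\cdot w_I-(v\cdot e_0)(w\cdot e_0)+v_F\cdot w_F=v_I\cdot w_I+1+v_F\cdot w_F,$$
so the assertion $v\cdot w\leq v_F\cdot w_F+1$ is exactly equivalent to the inequality $v_I\cdot w_I\leq 0$. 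This is the statement I would isolate.

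To analyse it, write $v_I=e_0+\sum_i a_if_i$ and $w_I=-e_0+\sum_i b_if_i$. The memberships $v_I,w_I\in L_I=\langle w_0\rangle^\bot$ translate into $\sum_i\sigma_ia_i=-1$ and $\sum_i\sigma_ib_i=1$, while $v_I\cdot w_I=-1+\sum_i a_ib_i$. Hence everything comes down to the coordinate bound $\sum_i a_ib_i\leq 1$. The tools available are that $v$ and $w$ are honest vertices of $\Gamma_D$, hence irreducible in $L$ (the diagram is reduced, so $\Lambda_D$ is indecomposable and $\sigma_i\geq 1$ for all $i$ by Lemma~\ref{lem:indecomp}), and the realisation of partial sums of the $\sigma_i$ furnished by Proposition~\ref{prop:CMcondition}.

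Proving $\sum_i a_ib_i\leq 1$ is the main obstacle, and the cheap estimate is not enough: since $v$ and $w$ are distinct vertices, $v\cdot w\leq 0$, and combined with $v_F\cdot w_F=1-\norm{v_0}$ this only gives $v_I\cdot w_I\leq\norm{v_0}-2=\min M-1$, which is positive as soon as $\min M>1$, so the changemaker structure must genuinely be used. I would therefore argue by contradiction, assuming $\sum_i a_ib_i\geq 2$, and build a short certificate $z\in L$ in the style of Lemma~\ref{lem:coefbound} and Lemma~\ref{lem:markersunique}: take the first index $g$ at which an appropriate signed combination of the coordinates turns non-positive, use Proposition~\ref{prop:CMcondition} to write $\sigma_g-1=\sum_{i\in A}\sigma_i$ with $A\subseteq\{1,\dotsc,g-1\}$, and assemble $z=-f_g+\dotsb$ from the $f_i$ and the $v_i$ so that $z\in L$ and $\norm{z}$ stays small. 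Feeding $z$ into Lemma~\ref{lem:usefulbound} with $x=v$ (or with the vertex--sum $x=v+w$) then bounds $(x-z)\cdot z$, and the assumption $\sum_i a_ib_i\geq 2$ should push this quantity strictly above $0$, contradicting Lemma~\ref{lem:usefulbound}. The delicate point I expect to wrestle with is that $\sum_i a_ib_i$ is a global quantity, so $z$ must be engineered to localise the overlap of the supports of $(a_i)$ and $(b_i)$---for instance by restricting to indices where $a_i$ and $b_i$ share a sign---while keeping $\norm{z}$ small enough for the changemaker inequality to bite.
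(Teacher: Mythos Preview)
Your reduction is correct and clean: using \eqref{eq:splittingproduct} together with $v\cdot e_0=1$ and $w\cdot e_0=-1$ does reduce the lemma to $v_I\cdot w_I\leq 0$, equivalently $\sum_i a_ib_i\leq 1$. But the completion you sketch has a genuine gap. The tool you reach for, Lemma~\ref{lem:usefulbound}, gives a bound of the form $(x-z)\cdot z\leq 0$ which is \emph{linear} in $x$; applying it with $x=v$, $x=w$, or $x=v+w$ produces inequalities that are linear in the coordinates $a_i$ and $b_i$ separately. The quantity you need to control, $\sum_i a_ib_i$, is bilinear, and no single test vector $z$ built from the changemaker condition will make $(x-z)\cdot z$ see that product. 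Your own closing sentence (``$\sum_i a_ib_i$ is a global quantity'') is exactly the obstruction, and the sketch does not explain how the vague ``localise the overlap of the supports'' step overcomes it.

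The paper gets around this by first pinning down one of the two factors. It uses the irreducible vector $z=-f_1+v_0$ together with Lemma~\ref{lem:coefbound} (the bound $|x\cdot f_1|\leq 2$ for vertex-sums, $\leq 1$ for irreducibles with $x\cdot e_0\neq 0$) to force $v\cdot f_1\leq 0$ or $w\cdot f_1\geq 0$; say the former. If $v\cdot f_1=-1$, irreducibility of $v$ forces $v=z$ outright, so $v_I=e_0-f_1$ and $\sum_i a_ib_i=-b_1$, which Lemma~\ref{lem:coefbound} bounds by $1$. If $v\cdot f_1=0$, then $(v-z)\cdot z=-1$ and the paper invokes Lemma~\ref{lem:cutedge}---the cut-edge structure of the graph lattice, which you do not use at all---to locate the unique vertices pairing positively with $z$ and $v-z$, and then a second application of Lemma~\ref{lem:coefbound} finishes. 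The key idea you are missing is not a cleverer test vector but the two-step strategy: use the $f_1$-coordinate to make one of $v,w$ essentially explicit, and then the bilinear term collapses to something linear.
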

\begin{proof}
By Lemma~\ref{lem:generalirred}, the vector $z=-f_1 + v_0$ is irreducible, so by Lemma~\ref{lem:irreducible} there is $R\subseteq V_D$, such that $z=\sum_{x \in R}x$. By considering $z\cdot e_0$, it follows that $w\notin R$ and $v\in R$. Thus $z-v+w$ is also a sum of vertices. Applying Lemma~\ref{lem:coefbound} gives
$$(z-v+w)\cdot f_1 = -1 -v\cdot f_1 + w\cdot f_1 \geq -2.$$
This implies $v\cdot f_1 \leq 0$ or $w \cdot f_1 \geq 0$. We will now prove the lemma for the case $v\cdot f_1 \leq 0$. The argument can easily be modified to treat the case $w \cdot f_1 \geq 0$.

\paragraph{}Suppose that $v\cdot f_1 \leq 0$. By Lemma~\ref{lem:coefbound}, this implies $v\cdot f_1 \in \{0,-1\}$. As before, we consider $z=-f_1 + v_0$. If $v\cdot f_1 = -1$, then the irreducibility of $v$ implies $v=z$ and we have
$$v\cdot w=-w\cdot f_1+w_F\cdot v_0\leq 1 + w_F\cdot v_0=1+w_F \cdot v_F,$$
which is the required bound.
If $v \cdot f_1 =0$, then $(v-z)\cdot z =-1$. Since
\begin{align*}
v\cdot w &= (v-z)\cdot w + z\cdot w \\
    & = (v-z)\cdot w - w\cdot f_1 + v_F \cdot w_F,
\end{align*}
we are required to show $(v-z)\cdot w - w\cdot f_1 \leq 1$. However, by Lemma~\ref{lem:cutedge},
$(v-z)\cdot w \leq 1$, and by Lemma~\ref{lem:coefbound}, $w\cdot f_1\geq -1$. Thus it suffices to show that $w\cdot (v-z)\leq 0$ or $w\cdot f_0\geq 0$. Suppose $(v-z)\cdot w = 1$. By Lemma~\ref{lem:cutedge}, this implies the existence of a vertex $u\notin \{w,v\}$ such that $u\cdot z=1$ and $u+w$ is irreducible. The condition $u\cdot z =1$ implies that $u\notin \{v_1,\dotsc , v_m\}$, so such a $u$ satisfies $u_F=0$. Therefore we are required to have $u\cdot f_1=-1$. Using the irreducibility of $u+w$, Lemma~\ref{lem:coefbound} implies
$$(u+w)\cdot f_1 = -1 + w\cdot f_1 \geq -1.$$
This implies that $w \cdot f_1 \geq 0$, which completes the proof for this case.
\paragraph{} If $w \cdot f_1 \geq 0$ one can consider $z=f_1-(v_0 + \dotsb + v_m)$, which allows one to carry out an almost identical argument.
\end{proof}

The vertices $v_1, \dotsc , v_m$ form a path between $v$ and $w$ in $\Gamma_D$ and there are at least $|w_F \cdot v_F| -1$ edges between $v$ and $w$. Performing some flypes of the form given by Figure~\ref{fig:flype2} if necessary, we may assume that there is a disk in the plane whose boundary intersects only the regions $v$ and $w$ and its interior contains precisely $|w_F \cdot v_F| -1$ of the crossings between $v$ and $w$, the regions $v_1, \dotsc, v_m$, and all crossings incident to them. In particular, this means that $v, w, v_1, \dotsc , v_m$ are the regions of a subtangle in $D$, and the white graph $\Gamma_D$ in the interior of the disk is as in Figure~\ref{fig:fractangle}. We will call this tangle a {\em fractional tangle}.

\begin{figure}[h]
  \centering
  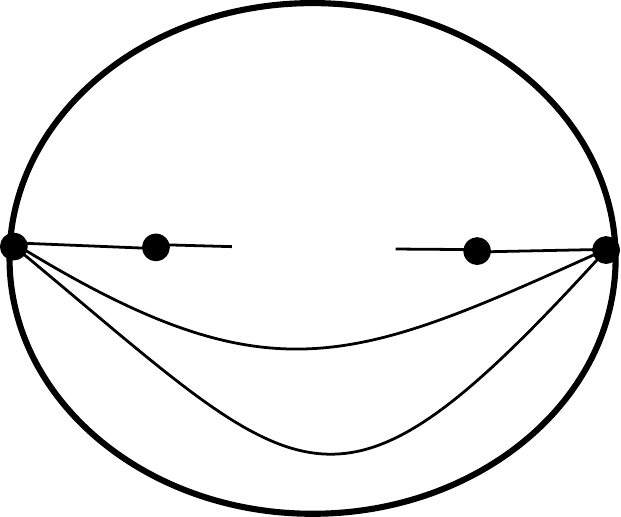
 \caption{The white graph of the fractional tangle.}
 \label{fig:fractangle}
\end{figure}
\begin{lem}\label{lem:fractionalslope}
A fractional tangle is a rational tangle of slope $\frac{q-r}{r}$.
\end{lem}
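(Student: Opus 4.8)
The plan is to identify the white graph $\Gamma_T$ of the fractional tangle explicitly and then apply Proposition~\ref{prop:tangledetection}, reducing the lemma to a single continued-fraction identity. First I would compute the degrees of the vertices of $\Gamma_T$. By Lemma~\ref{lem:markersunique} we have $v_F=v_0$ and $w_F=-(v_0+\dots+v_m)$, and each $v_i$ with $i\geq1$ lies in $\langle e_1,\dots,e_s\rangle$; hence $v\cdot v_1=v_0\cdot v_1=-1$ and $v\cdot v_i=0$ for $i\geq2$, so $v$ is joined to $v_1$ by a single edge and to no other $v_i$. Likewise the number of edges from $v_i$ to $w$ equals $(v_0+\dots+v_m)\cdot v_i$, which is $\norm{v_i}-2$ for $1\leq i\leq m-1$ and $\norm{v_m}-1$ for $i=m$. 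Together with the $\norm{v_0}-2$ crossings between $v$ and $w$ placed inside the disk by construction, this gives exactly the graph of Figure~\ref{fig:fractangle}: a path $v,v_1,\dots,v_m,w$ in which every off-path edge is incident to $w$. Taking $v$ and $w$ as the two boundary regions with $w$ in the role of the final vertex, Proposition~\ref{prop:tangledetection} then shows that $T$ is rational of slope $\frac{q-r}{r}$ precisely when $[b_0,\dots,b_m]^-=\frac{r}{q-r}$, where the degrees in $\Gamma_T$ are $b_0=\norm{v_0}-1$ and $b_i=\norm{v_i}$ for $1\leq i\leq m$.

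It then remains to establish $[\,\norm{v_0}-1,\norm{v_1},\dots,\norm{v_m}\,]^-=\frac{r}{q-r}$, and I would deduce this from the stronger statement $[\,\norm{v_0},\norm{v_1},\dots,\norm{v_m}\,]^-=\frac{q}{q-r}$, since subtracting $1$ from the leading term yields $\frac{q}{q-r}-1=\frac{r}{q-r}$. The stronger identity is exactly Hirzebruch--Jung (Riemenschneider) duality applied to $\frac{q}{r}=[a_1,\dots,a_l]^-$: the markers $m_0<\dots<m_l$ cut $\{0,\dots,s\}$ into $l$ blocks of sizes $a_k-1$, the set $M$ consists of the non-marker indices, and each basis vector $v_i$ spans the interval between consecutive elements of $M$, crossing one marker in general and several consecutive markers whenever a run of $a_k$'s equals $2$. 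A direct check shows that this is precisely the point-diagram transpose of $(a_1,\dots,a_l)$, so the norm sequence $\norm{v_0},\dots,\norm{v_m}$ is the dual continued fraction, whose value is $q/(q-r)$.

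To keep the argument self-contained I would prove $[\,\norm{v_0},\dots,\norm{v_m}\,]^-=\frac{q}{q-r}$ by induction on $l$: the base case $l=1$ gives $q/r=a_1$, so $M=\{1,\dots,a_1-2\}$, every norm equals $2$, and $[2,\dots,2]^-=\frac{a_1}{a_1-1}=\frac{q}{q-r}$; the inductive step compares the configuration for $[a_1,\dots,a_l]^-$ with that for $[a_1,\dots,a_{l-1}]^-$, tracking how enlarging the final block of $M$ alters the tail of the norm sequence. The main obstacle is exactly this combinatorial bookkeeping, namely verifying that the construction of the $v_i$ realises the Hirzebruch--Jung transpose; the delicate point is the behaviour of the vector attached to the last $M$-index of a block, which jumps over a single marker in general but over several consecutive markers precisely when a run of $a_k$'s equals $2$. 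Once the norm identity is in hand the slope computation is immediate, giving slope $\frac{q-r}{r}$ as claimed.
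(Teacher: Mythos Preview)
Your proposal is correct and follows essentially the same route as the paper: both apply Proposition~\ref{prop:tangledetection} to the graph of Figure~\ref{fig:fractangle}, identify the entries of the resulting negative continued fraction as $\lVert v_0\rVert-1,\lVert v_1\rVert,\dots,\lVert v_m\rVert$, and then reduce the slope computation to a continued-fraction identity. The only difference is packaging: the paper proves the identity by passing through the $[\,\cdot\,]^+$ expansion of $p/q$ via the conversion formula \eqref{eq:pointrule}, whereas you recognise it directly as Hirzebruch--Jung/Riemenschneider duality between $q/r=[a_1,\dots,a_l]^-$ and $q/(q-r)=[\lVert v_0\rVert,\dots,\lVert v_m\rVert]^-$ and propose a self-contained induction on $l$; these are two phrasings of the same duality.
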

\begin{proof}
A fractional tangle satisfies the hypotheses of Proposition~\ref{prop:tangledetection}, so it is a rational tangle. It remains to calculate the slope. Suppose the slope is $\frac{\alpha}{\beta}$. By \eqref{eq:tangleslope}, this is given by the continued fraction:
\begin{equation}\label{eq:betaalpha1}
\frac{\beta}{\alpha} =[|w_F \cdot v_F| -1, \norm{v_1}, \dotsc , \norm{v_m}]^-.
\end{equation}
If we write the continued fraction for $p/q$ in the form
\begin{equation}\label{eq:pq1}
\frac{p}{q}=[a_0+1, 2^{[a_1-1]}, a_2 + 2, \dotsc , 2^{[a_{2k-1}-1]}]^-,
\end{equation}
where $2^{[k]}$ denotes $\underbrace{2, \dotsc, 2}_{k}$,
then from the definition of a changemaker lattice, we see that the continued fraction in \eqref{eq:betaalpha1} is
\begin{equation}\label{eq:betaalpha2}
\frac{\beta}{\alpha}=[a_1, 2^{[a_2-1]}, a_3+2, \dotsc , a_{2k-1}+1]^-.
\end{equation}
The following formula allows us to switch between the two types of continued fraction (for example, see \cite{popescu04continued}):
\begin{equation}\label{eq:pointrule}
[c_0,\dotsc, c_{2d-1}]^+=
\begin{cases}
[c_0+1,2^{[c_1-1]},a_2+2, 2^{[a_3-1]}, \dotsc , c_{2d-2}+2, 2^{[c_{2d-1}-1]}]^-, \quad d\geq 2\\
[c_0+1,2^{[c_1-1]}]^- \quad d=1.
\end{cases}
\end{equation}
Applying \eqref{eq:pointrule} to \eqref{eq:pq1} and \eqref{eq:betaalpha2} gives the continued fractions
$$\frac{p}{q}=[a_0,a_1, \dotsc, a_{2k-1}]^+$$
and
$$\frac{\beta}{\alpha}=[a_1-1, a_2, \dotsc, a_{2k-1}]^+.$$
Since we may write $\frac{p}{q}$ in the form $\frac{p}{q}=n-1+\frac{q-r}{q}$,
we have $a_0=n-1$ and
$$\frac{q}{q-r}=[a_1, a_2, \dotsc, a_{2k-1}]^+.$$
It follows that
$$\frac{\beta}{\alpha}= \frac{q}{q-r}-1=\frac{r}{q-r},$$
and hence that the slope of the fractional tangle is $\frac{\alpha}{\beta}=\frac{q-r}{r}$, as required.
\end{proof}

We are now in a position to prove the following proposition.
\begin{prop}\label{prop:CMidentifiestangle}
Let $D$ be an alternating link diagram and suppose that $\Lambda_D$ is isomorphic to a $p/q$-changemaker lattice $L$, where $p/q$ can be written in the form $p/q=n-r/q$. Then there is a sequence of flypes to a diagram $\widetilde{D}$ which contains a fractional tangle. Furthermore, if the fractional tangle is replaced by a single crossing $c$ to obtain a alternating diagram $\widetilde{D}'$, then $\Lambda_{\widetilde{D}'}$ is isomorphic to a $(n-1/2)$-changemaker lattice in such a way that $c$ is a marked crossing.
\end{prop}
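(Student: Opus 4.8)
The first assertion is a summary of the work already carried out in this subsection. After applying Lemma~\ref{lem:flypetorational} so that $v_1,\dots,v_m$ are vertices, Lemma~\ref{lem:markersunique} supplies the two distinguished vertices $v$ and $w$ with $v_F=v_0$ and $w_F=-(v_0+\dots+v_m)$, and Lemma~\ref{lem:markerbounds} together with the flypes of Figure~\ref{fig:flype2} lets me isolate the fractional tangle inside a disk meeting only $v$ and $w$; Lemma~\ref{lem:fractionalslope} identifies its slope as $\frac{q-r}{r}$, producing the diagram $\widetilde D$. So the substance of the proposition is the second assertion, concerning the diagram $\widetilde D'$ obtained by collapsing this tangle to a single crossing $c$ joining $v$ and $w$ (performed so as to keep the diagram alternating).

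The plan is to write down an explicit isometry from $\Lambda_{\widetilde D'}$ onto the $(n-1/2)$-changemaker lattice
$$L'=\langle w_0',\, e_1'-e_0'\rangle^\bot\subseteq\langle f_1,\dots,f_t,e_0',e_1'\rangle,\qquad w_0'=e_0'+\sigma_1 f_1+\dots+\sigma_t f_t,$$
which uses the same changemaker data $(\sigma_1,\dots,\sigma_t)$ as $L=\Lambda_{\widetilde D}$. I define a collapse map $\phi\colon L\to L'$ that keeps the integer part and crushes the fractional part onto a single summand: $f_i\mapsto f_i$, $e_0\mapsto e_0'$, $v_0\mapsto e_0'+e_1'$, and $v_i\mapsto 0$ for $1\le i\le m$. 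Writing $x=x_I+x_F-x_0e_0$ this reads $\phi(x)=x_I'+x_0e_1'$, where $x_I'$ denotes $x_I$ under $e_0\mapsto e_0'$. A direct check using $e_1'\cdot w_0'=0$ and $x_I\cdot w_0=0$ shows $\phi(x)\in L'$, and a second computation gives $\phi(x)\cdot\phi(y)=x_I\cdot y_I+x_0y_0$; comparing with \eqref{eq:splittingproduct}, this differs from $x\cdot y$ exactly by $2x_0y_0-x_F\cdot y_F$.

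The crucial structural input is that, after the isolating flypes, every vertex $x\notin\{v,w,v_1,\dots,v_m\}$ has $x_0=0$ (Lemma~\ref{lem:markersunique}) and also $x_F=0$: such an $x$ is non-adjacent to every $v_i$, so $x_F\cdot v_i=x\cdot v_i=0$ for all $i$, and since $x_0=0$ forces $x_F\in\langle v_1,\dots,v_m\rangle$, non-degeneracy of the (tridiagonal, positive-definite) form there gives $x_F=0$. I then verify that $\phi$ restricted to the exterior vertices, $V_{\widetilde D'}=V_{\widetilde D}\setminus\{v_1,\dots,v_m\}$, reproduces the Goeritz pairing of $\widetilde D'$. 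For two exterior vertices the collapse changes nothing and $2x_0y_0-x_F\cdot y_F=0$, so the pairing matches; for the pair $\{v,w\}$ I read off $v_I\cdot w_I=0$ from $v\cdot w=2-\norm{v_0}$, whence $\phi(v)\cdot\phi(w)=v_I\cdot w_I-1=-1$, matching the single new crossing; and on the diagonal the degrees of $v$ and $w$ drop by their tangle-internal degrees $\norm{v_F}-1$ and $\norm{w_F}-1$ and gain $1$, landing on $\phi(v)\cdot\phi(v)=\norm{v_I}+1$ and $\phi(w)\cdot\phi(w)=\norm{w_I}+1$. Finally $\phi$ sends the basis $\{u_1,\dots,u_t\}$ of $L$ built in Lemma~\ref{lem:indecomp} to the corresponding basis of $L'$ (with $v_0\mapsto e_0'+e_1'$), so $\phi$ is onto and descends to an isomorphism $\Lambda_{\widetilde D'}\cong L'$. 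Under it $\phi(v)=u_1+e_0'+e_1'$ and $\phi(w)=u_2-e_0'-e_1'$ are precisely the two marked vectors, so $c$ is a marked crossing.

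I expect the main obstacle to be the bookkeeping that pins down the Goeritz form of $\widetilde D'$, in particular the two facts that make the collapse land on a half-integer lattice: that no exterior vertex carries any fractional part (so that crushing $L_F$ leaves the rest of the diagram untouched), and that $v_I\cdot w_I=0$ together with the correct count of $v$--$w$ edges leaves exactly one crossing between $v$ and $w$. Both reduce to the tangle-isolating flypes having genuinely swept every crossing incident to $v_1,\dots,v_m$ into the disk, which is where the earlier lemmas of this subsection do the real work.
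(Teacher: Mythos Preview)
Your approach is essentially identical to the paper's: the paper replaces $v$ and $w$ by $\tilde v=v_I+e_1$ and $\tilde w=w_I-e_1$, leaves the remaining vertices alone (noting they have zero fractional part), and observes that the image lands in the $(n-1/2)$-changemaker lattice $\langle w_0,\,e_1-e_0\rangle^\bot\subseteq\mathbb{Z}^{t+2}$. Your collapse map $\phi$ is exactly this assignment, and your verification that exterior vertices satisfy $x_F=0$ is a cleaner justification of what the paper states in one line.

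There is, however, a small slip in your $\{v,w\}$ computation. Lemma~\ref{lem:markerbounds} gives only the inequality $v\cdot w\le v_F\cdot w_F+1$, so from $v\cdot w=v_I\cdot w_I+1+v_F\cdot w_F$ you obtain $v_I\cdot w_I\le 0$, not $v_I\cdot w_I=0$. Correspondingly, the number of $v$--$w$ crossings in $\widetilde D'$ is $1-v_I\cdot w_I$, which need not be exactly one. Fortunately this does not damage the argument: the Goeritz pairing of $v$ and $w$ in $\widetilde D'$ is $-(1-v_I\cdot w_I)=v_I\cdot w_I-1$, and this agrees with $\phi(v)\cdot\phi(w)=v_I\cdot w_I-1$ regardless of the value of $v_I\cdot w_I$. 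So drop the claim that $v_I\cdot w_I=0$ and that there is a \emph{single} new crossing; the isometry verification goes through unchanged, and the conclusion that $c$ is a marked crossing follows exactly as you wrote.
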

\begin{proof}
Suppose that $\Lambda_D$ is isomorphic to the $p/q$-changemaker lattice $L$ defined as in Definition~\ref{defn:CMlattice},
$$L=\langle w_0, \dotsc, w_l\rangle^\bot \subseteq \mathbb{Z}^{t+s+1}.$$
Let $v_0,\dotsc, v_m$, be the basis for $L_F$ as constructed in Section~\ref{sec:fracparts}. By Lemma~\ref{lem:flypetorational} we may flype to a diagram in which $v_1, \dotsc, v_m$ are vertices. In such a diagram, Lemma~\ref{lem:markersunique} shows that there are vertices $v$ and $w$ with $v_F=v_0$ and $w_F=-(v_0+\dotsb + v_m)$. By Lemma~\ref{lem:markerbounds} and discussion following it we may further flype to a diagram $\widetilde{D}$ in which there is a fractional tangle which is the one determined by the regions $v_1, \dotsc, v_m$ and $|v_F\cdot w_F|-1$ crossings between $v$ and $w$. Now let $\widetilde{D}'$ be the alternating diagram obtained by replacing this fractional tangle with a single crossing $c$. Since $v_1, \dotsc, v_m,v$ and $w$ are the only regions in $\widetilde{D}$ with non-zero fractional part, we see that $\Lambda_{\widetilde{D}'}$ admits an embedding into $\langle f_1, \dotsc f_t, e_0, e_1 \rangle \subseteq \mathbb{Z}^{t+2}$, where $V_{\widetilde{D}'}$ is obtained from $V_{\widetilde{D}}$ by deleting $v_1, \dotsc, v_m$ and replacing $v$ and $w$ by $\tilde{v} =v_I+e_1$ and $\tilde{w}=w_I - e_1$, respectively. By considering the image of this embedding we see that $\Lambda_{\widetilde{D}'}$ is isomorphic to the $(n-1/2)$-changemaker lattice
$$\langle w_0, e_1-e_0 \rangle^\bot \subseteq \langle f_1, \dotsc f_t, e_0, e_1 \rangle = \mathbb{Z}^{t+2}.$$
Since $\tilde{v}\cdot e_0=-\tilde{w}\cdot e_0 = 1$, it is clear from the definition that $c$ is a marked crossing for this embedding.
\end{proof}
Now we give an explicit example to show the tangle replacement of Proposition~\ref{prop:CMidentifiestangle} in action.
\begin{example}Let $L$ be the $107/5$-changemaker lattice, given by
$$L=\langle 4f_3+2f_2+f_1+e_0, e_1-e_0, e_2+e_3-e_1 \rangle^\bot \subset \mathbb{Z}^{7}.$$
This is isomorphic to the Goeritz form of the alternating knot $11a_{15}$. Figure~\ref{fig:11a15example} shows an alternating diagram $D$ of $11a_{15}$ with the induced labeling on the white regions. There is a fractional tangle in $D$ and, as expected, it is of slope $\frac{2}{3}$. Replacing the fractional tangle by a single crossing, $c$, we obtain an alternating diagram $D'$. As shown in Figure~\ref{fig:11a15example}, there is an embedding of the Goeritz form $\Lambda_{D'}$ into $\mathbb{Z}^5$ which shows that $\Lambda_{D'}$ is isomorphic to the $43/2$-changemaker lattice
$$L'=\langle 4f_3+2f_2+f_1+e_0, e_1-e_0\rangle^\bot \subset \mathbb{Z}^{5}.$$
This isomorphism makes $c$ into a marked crossing.
\end{example}
\begin{figure}[h]
  \centering
  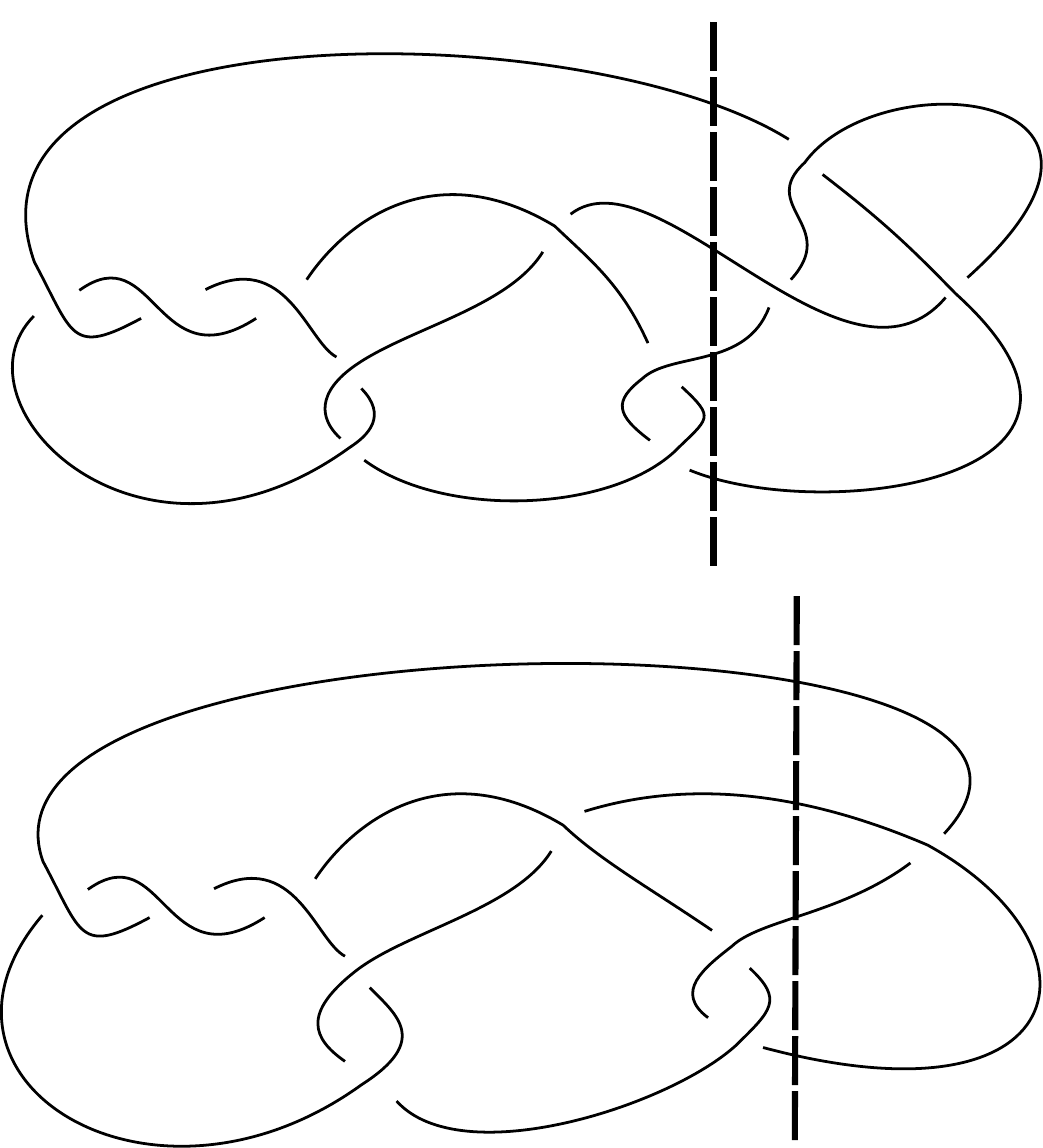
 \caption{A diagram of $11a_{15}$ with the structure of a $107/5$-changemaker lattice. The fractional tangle for this embedding is to the right of the dotted line. Replacing the fractional tangle with a single crossing, $c$, we obtain a diagram of the knot $9_{22}$. The embedding which makes $\Lambda_{D'}$ into a $43/2$-changemaker lattice is also illustrated. It can be checked that $c$ is an unknotting crossing.}
 \label{fig:11a15example}
\end{figure}

\subsection{The main results}
We can now prove our main results.
\begin{proof}[Proof of Theorem~\ref{thm:rationalsurgery}]
First we show that $(i)\Rightarrow (ii)$. Suppose $\Sigma(L)=S_{-p/q}^3(\kappa)$ for some $\kappa \subset S^3$. Let $D$ be a reduced alternating diagram for $L$. Since $D$ is alternating, $\Sigma(L)$ bounds a positive-definite, sharp, simply-connected 4-manifold with intersection lattice isomorphic to $\Lambda_D$ \cite{ozsvath2005heegaard}. Thus, Theorem~\ref{thm:Gibbonsrefined} implies that $\Lambda_D$ is isomorphic to a $p/q$-changemaker lattice.
Applications of Proposition~\ref{prop:CMidentifiestangle}, Lemma~\ref{lem:fractionalslope} and Theorem~\ref{thm:markedmeansunknotting} prove $(ii) \Rightarrow (iii)$.
The implication $(iii) \Rightarrow (i)$ is the Montesinos trick as given by Proposition~\ref{prop:3implies1}.
\end{proof}
\begin{proof}[Proof of Theorem~\ref{thm:informaltheorem}]
By considering $L$ or its reflection $\overline{L}$ as necessary, the theorem follows immediately from the equivalence $(i)\Leftrightarrow(iii)$ in Theorem~\ref{thm:rationalsurgery}.
\end{proof}
\begin{proof}[Proof of Proposition~\ref{prop:smallsurgery}]
If $S^3_{-p/q}(\kappa)$ is the branched double cover of an alternating link, then $\kappa$ is an $L$-space knot as the branched double cover of an alternating link is an $L$-space \cite{ozsvath2005heegaard}. Therefore we must have have the bound \cite{ozsvath2011rationalsurgery}
$$2g(\kappa)-1\leq p/q.$$
If $p/q<1$, then $g(\kappa)=0$. This implies that $\kappa$ is the unknot. The proposition follows since surgery on the unknot yields lens spaces, which are the branched double covers of 2-bridge links.
\end{proof}

\bibliographystyle{plain}
\bibliography{noninteger}

\begin{thebibliography}{10}

\bibitem{brown1961note}
J.~L. Brown, Jr.
\newblock Note on complete sequences of integers.
\newblock {\em Amer. Math. Monthly}, 68:557--560, 1961.

\bibitem{BurdeZieschang}
Gerhard Burde and Heiner Zieschang.
\newblock {\em Knots}, volume~5 of {\em de Gruyter Studies in Mathematics}.
\newblock Walter de Gruyter \& Co., Berlin, second edition, 2003.

\bibitem{conway69algebraic}
J.~H. Conway.
\newblock An enumeration of knots and links, and some of their algebraic
  properties.
\newblock In {\em Computational {P}roblems in {A}bstract {A}lgebra ({P}roc.
  {C}onf., {O}xford, 1967)}, pages 329--358. Pergamon, Oxford, 1970.

\bibitem{cglscyclic}
Marc Culler, C.~McA. Gordon, J.~Luecke, and Peter~B. Shalen.
\newblock Dehn surgery on knots.
\newblock {\em Ann. of Math.}, 125(2):237--300, 1987.

\bibitem{gibbons2013deficiency}
Julian Gibbons.
\newblock Deficiency symmetries of surgeries in ${S}^3$.
\newblock {\em arXiv:1304.0367}, 2013.

\bibitem{Gordon09dehnsurgery}
Cameron Gordon.
\newblock Dehn surgery and 3-manifolds.
\newblock In {\em Low dimensional topology}, volume~15 of {\em IAS/Park City
  Math. Ser.}, pages 21--71. Amer. Math. Soc., Providence, RI, 2009.

\bibitem{greene2010space}
Joshua Greene.
\newblock {L}-space surgeries, genus bounds, and the cabling conjecture.
\newblock {\em arXiv:1009.1130}, 2010.

\bibitem{GreeneLRP}
Joshua Greene.
\newblock The lens space realization problem.
\newblock {\em Ann. of Math. (2)}, 177(2):449--511, 2013.

\bibitem{Greene3Braid}
Joshua Greene.
\newblock Donaldson's theorem, {H}eegaard {F}loer homology, and knots with
  unknotting number one.
\newblock {\em Adv. Math.}, 255(0):672 -- 705, 2014.

\bibitem{lickorish1997introduction}
W.B.~Raymond Lickorish.
\newblock {\em An introduction to knot theory}.
\newblock Springer, 1997.

\bibitem{Line96knots}
Daniel Lines.
\newblock Knots with unknotting number one and generalised {C}asson invariant.
\newblock {\em J. Knot Theory Ramifications}, 5(1):87--100, 1996.

\bibitem{mccoy2013alternating}
Duncan McCoy.
\newblock Alternating knots with unknotting number one.
\newblock {\em arXiv:1312.1278}, 2013.

\bibitem{montesinos1973variedades}
Jos\'{e}~M. Montesinos.
\newblock Variedades de seifert que son recubricadores ciclicos rami cados de
  dos hojas.
\newblock {\em Boletino Soc. Mat. Mexicana}, 18:1--32, 1973.

\bibitem{ni2010cosmetic}
Yi~Ni and Zhongtao Wu.
\newblock Cosmetic surgeries on knots in ${S}^3$.
\newblock {\em J. Reine Angew. Math}, (to appear), 2013.

\bibitem{owensstrle2013immersed}
Brendan Owens and Sa\v{s}o Strle.
\newblock Immersed disks, slicing numbers and concordance unknotting numbers.
\newblock {\em arXiv:1311.6702}, 2013.

\bibitem{Ozsvath03Absolutely}
Peter Ozsv{\'a}th and Zolt{\'a}n Szab{\'o}.
\newblock Absolutely graded {F}loer homologies and intersection forms for
  four-manifolds with boundary.
\newblock {\em Adv. Math.}, 173(2):179--261, 2003.

\bibitem{ozsvath2005knots}
Peter Ozsv{\'a}th and Zolt{\'a}n Szab{\'o}.
\newblock Knots with unknotting number one and {H}eegaard {F}loer homology.
\newblock {\em Topology}, 44(4):705--745, 2005.

\bibitem{ozsvath2005heegaard}
Peter Ozsv{\'a}th and Zolt{\'a}n Szab{\'o}.
\newblock On the {H}eegaard {F}loer homology of branched double-covers.
\newblock {\em Adv. Math.}, 194(1):1--33, 2005.

\bibitem{ozsvath2011rationalsurgery}
Peter~S. Ozsv{\'a}th and Zolt{\'a}n Szab{\'o}.
\newblock Knot {F}loer homology and rational surgeries.
\newblock {\em Algebr. Geom. Topol.}, 11(1):1--68, 2011.

\bibitem{popescu04continued}
Patrick Popescu-Pampu.
\newblock The geometry of continued fractions and the topology of surface
  singularities.
\newblock In {\em Singularities in geometry and topology 2004}, volume~46 of
  {\em Adv. Stud. Pure Math.}, pages 119--195. Math. Soc. Japan, Tokyo, 2007.

\end{thebibliography}

\end{document}